\documentclass[11pt]{amsart}
\usepackage{a4wide}
\usepackage{amsmath, amsfonts, amssymb, amscd, graphicx, amsthm, cite}
\usepackage{mathrsfs,url}
\usepackage{hyperref}
\usepackage{color}
\usepackage{comment}
\usepackage{enumitem}
\theoremstyle{plain}

    \newtheorem{thm}{Theorem}
    \newtheorem{theorem}[thm]{Theorem}

    \newtheorem{lemma}[thm]{Lemma}

    \newtheorem{proposition}[thm]{Proposition}

    \newtheorem{corollary}[thm]{Corollary}

    \newtheorem{conjecture}[thm]{Conjecture}
    
\theoremstyle{definition}

    \newtheorem{question}[thm]{Question}

    \newtheorem{definition}[thm]{Definition}

\newcommand{\N}{{\mathbb N}}
\newcommand{\Z}{{\mathbb Z}}

\newcommand{\F}{{\mathbf F}}

\DeclareMathOperator{\id}{id}
 
\DeclareMathOperator{\Aut}{Aut}

\DeclareMathOperator{\End}{End}
\DeclareMathOperator{\Pol}{Pol}

\DeclareMathOperator{\dom}{dom}

\DeclareMathOperator{\Sub}{Sub}

\newcommand{\alg}[1]{\mathbf{#1}}
\newcommand{\clone}[1]{\mathcal{#1}}
\newcommand{\clonoid}[1]{\mathcal{#1}}

\newcommand{\clodC}{\clonoid{C}}
\newcommand{\clodD}{\clonoid{D}}
\newcommand{\clodO}{\clonoid{O}}

\newcommand{\algA}{\alg{A}}
\newcommand{\algB}{\alg{B}}
\newcommand{\algC}{\alg{C}}
\newcommand{\algD}{\alg{D}}
\newcommand{\algL}{\alg{L}}
\newcommand{\algM}{\alg{M}}

\newcommand{\algR}{\alg{R}}
\newcommand{\algS}{\alg{S}}
\newcommand{\algU}{\alg{U}}
\newcommand{\cloA}{\clone{A}}
\newcommand{\cloB}{\clone{B}}
\newcommand{\cloC}{\clone{C}}
\newcommand{\cloO}{\clone{O}}

\renewcommand\vec{\mathbf}
\newcommand{\va}{\vec{a}}
\newcommand{\vb}{\vec{b}}

\newcommand{\vn}{\vec{n}}
\newcommand{\vm}{\vec{m}}
\newcommand{\vr}{\vec{r}}
\newcommand{\vu}{\vec{u}}

\newcommand{\vx}{\vec{x}}
\newcommand{\vy}{\vec{y}}

 \newcommand{\restrict}[1]{\lvert_{#1}}

\DeclareMathOperator{\Clo}{\mathsf{Clo}}

\DeclareMathOperator{\rk}{rk}
\DeclareMathOperator{\GL}{GL}
\DeclareMathOperator{\chars}{char}
\DeclareMathOperator{\supp}{supp}
\DeclareMathOperator{\SMP}{SMP}

\DeclareMathOperator{\Id}{Id}

\newcommand{\Col}[1]{\mathcal{C}(#1)}

\newcommand{\ste}[1]{\textcolor{green}{ #1}}

\newcommand{\mk}[1]{\textcolor{red}{Mk: #1}}

\author[S.~Fioravanti]{Stefano Fioravanti}
\address{Department of Algebra\\ Charles University\\Praha\\ Czechia
}
\email{stefano.fioravanti66@gmail.com}

\author[M.~Kompatscher]{Michael Kompatscher}
\address{Department of Algebra\\ Charles University\\Praha\\ Czechia}
\email{kompatscher@karlin.mff.cuni.cz}

\author[B.~Rossi]{Bernardo Rossi}
\address{Siena, Italy}
\email{bernardo.rossi96@gmail.com}

\thanks{This paper was supported by Charles University under grant number PRIMUS/24/SCI/008. Michael Kompatscher received additional funding from the Czech Science Foundation
(GA\v{C}R grant No. 25-16324S), and Charles University Research Center program No. UNCE/24/SCI/022.}

\title{Clonoids over vector spaces}

\begin{document}

\begin{abstract}
Clonoids are sets of finitary operations between two algebraic structures that are closed under composition with their term operations on both sides. We conjecture that, for finite modules $\algA$ and $\algB$ there are only finitely many clonoids from $\algA$ to $\algB$ if and only if $\algA$, $\algB$ are of coprime order.

We confirm this conjecture for a broad class of modules $\algA$. In particular we show that, if $\algA$ is a finite $k$-dimensional vector space, then every clonoid from $\alg A$ to a coprime module $\algB$ is generated by its $k$-ary functions (and arity $k-1$ does not suffice). In order to prove this results, we investigate `uniform generation by $(\algA,\algB)$-minors', a general criterion, which we show to apply to several other existing classifications results. Based on our analysis, we further prove that the subpower membership problem of certain 2-nilpotent Mal'cev algebras is solvable in polynomial time.
\end{abstract}

\maketitle

\section{Introduction}
A \emph{clone} is a set of finitary operations on a given set that contains all projections and is closed under functional composition. Clones are a fundamental object in universal algebra, as they describe the set of term operations of any algebraic structure. Thus, starting with Post's classification of clones on a two-element set \cite{post-lattice}, there has been a rich history of structural results on finite algebras based on clone theory, see e.g. \cite{szendrei-clones}. On the other hand, clones theory has also several applications in theoretical computer science. One of the most prominent examples is the study of polymorphism clones to determining the complexity of fixed template constraint satisfaction problems (see e.g. the survey \cite{BKW-polymorphisms}), which lead to Bulatov's and Zhuk's independent proofs of the CSP dichotomy theorem \cite{bulatov-dichotomy}, \cite{zhuk-dichotomy-short,zhuk-dichotomy}.

In recent years, generalizations of clones such as \emph{minions} and \emph{clonoids} have received increasing attention. Minions are sets of finitary operations from a set $A$ to a (possibly different) set $B$ that are closed under permutation and identification of variables, as well as the addition of dummy variables. Research on minions gained much traction due to their application on (promise) constraint satisfaction, see e.g. the survey \cite{BBKO-PCSP}. A \emph{clonoid} from an algebra $\algA$ to an algebra $\algB$ is a minion that is closed under (pre-)composition with operations of $\algA$ and (post-)composition with the operations of $\algB$. While the term `clonoid' was first introduced by Aichinger and Mayr in~\cite{AM-eqclasses}, the concept has appeared under various names earlier, e.g. in \cite{ CF-clonoids}. 

Due to their weak structure, it is does not seem to be feasible to explicitly describe all minions between two given finite sets. A first hurdle is their sheer number: it was shown in \cite{pippenger-minions} that there are already continuum many minions between two 2-element sets. However, depending on the expressiveness of the algebras $\algA$ and $\algB$, much more can be said about the clonoids between $\algA$ and $\algB$. Sparks showed in \cite{sparks-clonoids}, that there are only finitely many clonoids from $\algA$ to $\algB$ if $\algB$ has a near-unanimity term, and at most countably many, if $\algB$ has an edge-term. However, also a rich source algebra $\algA$ can lead to finiteness results: As showed in \cite{LS-discriminator}, every clonoids from an algebra $\algA$ with a discriminator term, to an algebra $\algB$ on the same set, gives rise to only finitely many clonoids.

We would also like to highlight a series of papers by Lehtonen and coauthors \cite{LN-minorsclique,LS-discriminator, CL-stability, lehtonen-majorityclonoids, lehtonen-NUclonoids}, which culminated in \cite{lehtonen-Booleanclonoids} to a complete characterization of all two-element algebras $(\algA,\algB)$, for which there are at most countable clonoids, together with a complete classification of clonoids in all such cases.

In this paper, we are interested in classifying clonoids, for which $\algB$ is a module. Such clonoids often appear naturally when studying the term operations of Mal'cev algebras with a central congruence; this connection was recently made more explicit by Peter Mayr's definition of the \emph{difference clonoid}, see e.g. \cite{kompatscher-SMP2nil}. In particular, 2-nilpotent algebras give rise to clonoids between modules $\algA$ and $\algB$.

As an example of an application let us let us mention the classification of extensions of square-free groups in \cite{AM-Zpqextensions} and later \cite{fioravanti-groupexpansions}, which heavily relied on the classification of certain clonoids.

There is already a series of classification results in the literature. The clonoids between groups of prime order $\Z_p$ and $\Z_q$ were completely classified by Kreinecker \cite{kreinecker-zpclonoids} (in the case that $p=q$) and the first author, for $p \neq q$ \cite{fioravanti-clonoids}. Interestingly, in the latter case, there are only finitely many clonoids that are all generated by their unary functions (while for $p=q$ one obtains infinitely many). 

In subsequent generalizations of these results, it was also always observed that there are only finitely many clonoids between certain modules $\algA$ and $\algB$ of coprime order - the most recent, and most general such finiteness result is by Mayr and Wynne \cite{MW-clonoidsmodules}, and applies to all cases where $\algA$ has a distributive lattice of submodules. This leads us to the following Conjecture:

\begin{conjecture} \label{conjecture:main}
Let $\algA$ and $\algB$ be finite modules. Then, there are only finitely many clonoids from $\algA$ to $\algB$, if and only if $\algA$ and $\algB$ are of coprime order.
\end{conjecture}

We remark that the ``only if direction'' follows from \cite[Theorem $1.3$]{MW-clonoidsmodules}, where the authors proved that for modules with a common divisor, one can always construct an infinite ascending chain of clonoids.

 The first main contribution of our paper is to confirm Conjecture \ref{conjecture:main} if $\algA$ is a finite vector space. In the case where $\algB$ is coprime to the vector space $\algA$, we are moreover able to give an explicit description of the lattice of all clonoids (Theorem \ref{theorem:clonoidlattice}). In particular, this answers \cite[Question 1.2]{MW-clonoidsmodules}, which asked for a classification of clonoids, if $\algA$ is the group $(\Z_p)^2$, for some prime $p$. By combining our result with \cite{MW-clonoidsmodules}, we are further able to confirm Conjecture \ref{conjecture:main} for all modules $\algA =\F_1^{k_1}\times \cdots \times \F_n^{k_n} \times \alg D$ as $\F_1\times \cdots \times\F_n \times \algR$-module, where $\alg D$ is a distributive $\algR$-module, and all $\F_i$ are fields.

Our classification is based on proving that, in the case of a vector space $\algA = \F^k$ and coprime $\algB$, the set of all operations from $A$ to $B$ is \emph{uniformly generated} by $k$-ary minors. While this technique was already hinted at in \cite{MW-clonoidsmodules}, we are going to discuss it in great detail in Section \ref{sec:unifgen}, and argue that many of the existing clonoid classifications follow the same pattern. We furthermore show that the arity $k$ is optimal, i.e., not every clonoid from $\F^k$ to $\algB$ is generated by their $k-1$-ary functions. This coincides with a lower bound on the arity of generators of the \emph{full} clonoid between modules that we derive in Section \ref{sec:lowerbound}, and was already pointed out in the Bachelor thesis by Jan Van\v{e}\v{c}ek \cite{vanecek-thesis}.

Last, let us mention that, clonoids between affine algebras were also (sometimes implicitly) used to discuss some computational problems of Mal'cev algebras. The fact that some clonoids between coprime modules are generated by unary functions, which allows for a representation of their elements by nice normal forms, was used in \cite{KKK-CEQV2nil} to prove that checking polynomial identities in a given 2-nilpotent algebra can always be done in polynomial time. In \cite{mayr-VLloop} and \cite{KompatscherMayr2026} such normal forms were similarly used to find finite equational basis for some 2-nilpotent loops.

While we are not going to discuss such syntactic aspects of clonoids, we will apply our results to the \emph{subpower membership problem} of certain algebras. The subpower membership problem (SMP($\algA$)), for a fixed finite algebra $\algA$, is the computational problem whether a given partial operation $f\colon A^n \to A$ can be extended to a term operation of $\algA$. In \cite{IMMVW-subpowers} it was asked, whether the subpower membership problem is always polynomial time solvable for algebras with few subpowers. This question, however remains unsolved even in the Mal'cev case. The easiest examples that are not covered by existing tractability results \cite{mayr-SMP, BMS-SMP}, are 2-nilpotent algebras, for which it was shown in \cite{kompatscher-SMP2nil} that the problem is polynomial time equivalent to a similar `interpolation problem' for their clonoids. In Section \ref{sect:SMP} we are going to use our results, to prove that the subpower membership problem of a big class of 2-nilpotent algebras is in P.

\subsection*{Organization of the paper}

The paper is organized as follows. In Section~\ref{sec:prel} we introduce the necessary concepts from universal algebra and fix some notation. In Section~\ref{sec:unifgen} we develop a theory of `uniform generation' and `uniform representation' by minors, a concept introduced in~\cite{MW-clonoidsmodules}. In particular we provide a purely combinatorial criterion for all $(\algA,\algB)$-clonoids to be generated by their $k$-ary functions (Theorem \ref{theorem:unigen}).

Section~\ref{sec:mainres} presents the main results of the paper, Theorem \ref{theorem:main} and discusses some consequences. In Section \ref{sect:SMP} we apply our results to the subpower membership problem. In Sections~\ref{sec:lowerbound} we provide a lower bound on the arity of the generators of the full clonoid between finite modules, which is sharp in our setting. Section~\ref{sec:conclusion} is dedicated to future work and possible applications of our results.

\section{Background}\label{sec:prel}

In this section, we introduce some necessary notation, key definitions, and foundational results from the literature concerning clone theory, clonoids, and module theory that will serve as the basis for our analysis. For further background in universal algebra we refer the reader to~\cite{BS-universal-algebra}.

Concerning the basic notation, tuples consisting of elements are represented in boldface, with their individual components denoted as, e.g., $\boldsymbol{a}=(a_1,\dots,a_n)$. Furthermore, we write $[n]$ to indicate the set $\{1,\dots,n\}$ and $[n]_0$ for $[n] \cup \{0\}$.

\subsection{Clones and clonoids}\label{sec:cloclon}

\begin{definition}
For two sets $A, B$, we define $\clodO_{A,B} = \bigcup_{n \in \N} B^{A^n}$, that is, $\clodO_{A,B}$ is the set of all finitary operations from $A$ to $B$. Moreover, we are going to write $\clodO_A = \clodO_{A,A}$. For a set of operations $\clodC \subseteq \clodO_{A,B}$ we denote its $n$-ary part by $\clodC^{(n)}$, i.e. $\clodC^{(n)} = \clodC \cap B^{A^n}$.
\end{definition}


\begin{definition}
Let $A$ be a set and $\cloA \subseteq \clodO_A$. 
Then $\cloA$ is a \emph{clone} on $A$ 
if it contains all projections and 
is closed under composition, i.e.
\begin{itemize}
\item $\pi_i^n \in \cloA$, for every $1\leq i \leq n$ and $n \in \N$, where $\pi_i^n(x_1,\ldots,x_n) = x_i$
\item $f \in \cloA^{(n)}, g_1, \ldots, g_n \in \cloA^{(k)} \Rightarrow f \circ (g_1,\ldots,g_n) \in \cloA^{(k)}$ for all $k,n \in \N$.
\end{itemize}
\end{definition}

\begin{definition}
An \emph{algebraic structure} or \emph{algebra} $\algA = (A,(f_i)_{i\in I})$ is a pair consisting of a set $A$ (the \emph{universe} of $\algA$), and a family of finitary functions $f_i \colon A^{k_i} \to A$
(the \emph{basic operations} of $\algA$). The \emph{term clone of $\algA$} is the smallest clone containing all basic operations of $\algA$. We are going to denote it by $\Clo(\algA)$. More generally, for a set of operations $F$ on $A$ we are also going to write $\Clo(F)$ for the smallest clone containing $F$. We say two algebras $\algA$ and $\algA'$ are \emph{term-equivalent} if $\Clo(\algA) = \Clo(\algA')$.
\end{definition}

If not specified otherwise, we are going to use the same letter in different typeset for a clone/algebra and its domain, e.g., $\cloB$ is a clone on set $B$, $\algB$ is an algebra with universe $B$. An operation $m\colon A^3 \to A$ is called a \emph{Mal'cev operation}, if it satisfies the identities $m(y,x,x) \approx m(x,x,y) \approx y$. An operation $v \colon A^k \to A$ for $k \geq 3$ is called a \emph{near unanimity operation}, if it satisfies $v(y,x\ldots,x) \approx v(x,y,x, \ldots,x) \approx \ldots \approx v(x,x, \ldots, x,y) \approx x$.

We next introduce the main notion of this paper:

\begin{definition}[Clonoid]
Let $\cloA$ be a clone on a set $A$, and $\cloB$ be a clone on a set $B$. We then say that $\clodC \subseteq \clodO_{A,B}$ is a \emph{clonoid from $\cloA$ to $\cloB$} (or \emph{$(\cloA,\cloB)$-clonoid}, for short) if, for all $n,k \in \N$
\begin{itemize}
\item $f_1,\ldots,f_n \in \clodC^{(k)}, g\in \cloB^{(n)} \Rightarrow g\circ (f_1,\ldots,f_n) \in \clodC^{(k)}$,
\item $f \in \clodC^{(k)}, g_1,\ldots,g_k \in \cloA^{(n)} \Rightarrow f\circ (g_1,\ldots,g_k) \in \clodC^{(n)}$.
\end{itemize}
If $\cloA = \Clo(\algA)$ and $\cloB = \Clo(\algB)$ for algebras $\algA,\algB$ then we say that $\clodC$ is a clonoid from $\algA$ to $\algB$ or, equivalently, that $\clodC$ is an $(\algA, \algB)$-clonoid. 
\end{definition}

We remark that in the literature also alternative terminology is used; for example \cite{CF-clonoids} describes $(\cloA,\cloB)$-clonoids as \emph{function classes that are as left stable under $\cloB$ and right stable under $\cloA$}. The term \emph{clonoid} was coined by Aichinger and Mayr in \cite{AM-eqclasses}, and initially only applied to the case, in which $\cloA$ is the projection clone. 

Every $(\cloA,\cloB)$-clonoid is also a \emph{minor-closed set}, or \emph{minion} from $A$ to $B$ (in the sense of~\cite{pippenger-minions}, respectively ~\cite{BBKO-PCSP}), i.e., it is closed under composition with projections from the inside. Conversely, minions can be seen as clonoids from the clone of projections on $A$ to the clone of projections on $B$.

\begin{definition}
Let $\cloA$, $\cloB$ be clones on sets $A$ and $B$. For a set of operations $F \subseteq \clodO_{A,B}$ from $A$ to $B$, we define $\langle F \rangle_{\cloA,\cloB}$ to be the \emph{$(\cloA,\cloB)$-clonoid generated by $F$}, i.e., the smallest $(\cloA,\cloB)$-clonoid containing $F$. 
We say that a clonoid $\clodC$ from $\cloA$ to $\cloB$ is \emph{finitely generated} if there is a finite $F \subseteq \clodC$ with $\clodC = \langle F \rangle_{\cloA,\cloB}$.
\end{definition}

It is not hard to see that, for fixed $\cloA$, $\cloB$, the clonoids from $\cloA$ to $\cloB$ form a lattice with join and meet operations $\clodC \land \clodD = \clodC \cap \clodD$ and $\clodC \lor \clodD = \langle \clodC \cup \clodD \rangle_{\cloA,\cloB}$.

We next introduce polymorphism minions, in orderto give a relational description of clonoids:


\begin{definition}
A \emph{relational structure} is a pair $\mathbb{A} = (A, (R_i^{\mathbb{A}})_{i \in I})$ consisting of a set $A$ and a family of relations $R^{\mathbb{A}}_i \subseteq A^{k_i}$ of finite arity $k_i \geq 1$. The family of relational symbols and their arity $((R_i,k_i))_{i\in I}$ is called the language of $\mathbb A$.

Let $\mathbb{A} = (A, (R_i^{\mathbb{A}})_{i \in I})$ and $\mathbb{B} = (B, (R_i^{\mathbb{B}})_{i \in I})$ be relational structures in the same language. A \emph{homomorphism} for $\mathbb A$ to $\mathbb B$ is a map $h\colon A \to B$ that preserves all relations, i.e. $\va = (a_1,\ldots,a_{k_i}) \in R_i^{\mathbb A} \Rightarrow h(\va) = (h(a_1),\ldots,h(a_{k_i})) \in R_i^{\mathbb B}$, for all $i\in I$. An \emph{($n$-ary) polymorphism} from $\mathbb{A}$ to $\mathbb{B}$ is a map $f\colon A^n \to B$, such that for each relation symbol $R_i$, and for all tuples $\va_1, \ldots,\va_n \in R_i^{\mathbb A}$, also $f(\va_1, \ldots,\va_n) \in R_i^{\mathbb B}$; here, $f$ is evaluated coordinate-wise. This is equivalent to $f$ being a homomorphism from the direct power $\mathbb A^n$ to $\mathbb B$.

We denote the set of all polymorphisms from $\mathbb A$ to $\mathbb B$ by $\Pol(\mathbb A, \mathbb B)$. We further write $\Pol(\mathbb A) = \Pol(\mathbb A, \mathbb A)$. It is well-known that $\Pol(\mathbb A)$ forms a clone, which we call the \emph{polymorphism clone} of $\mathbb A$. It is also easy to see that $\Pol(\mathbb A, \mathbb B)$ is a clonoid from $\Pol(\mathbb A)$ to $\Pol(\mathbb B)$. In the literature $\Pol(\mathbb A, \mathbb B)$ is commonly referred to as \emph{polymorphism minion} from $\mathbb A$ to $\mathbb B$.
\end{definition}

In~\cite{pippenger-minions}, Pippenger introduced a Galois-connection between the minions between two finite sets $A$ and $B$, and pairs of relational structures on the other hand. His results in particular imply that every minion between $A$ and $B$ is equal to a polymorphism minion $\Pol(\mathbb A,\mathbb B)$.\footnote{In fact, slightly more generally, \cite[Section 3]{pippenger-minions}, also already gave a relational description the clonoids between essentially unary clones $\cloA$ to the projection clone on $B$.} In~\cite{CF-clonoids}, Couceiro and Foldes generalized Pippenger's Galois connection twofold - both to consider also sets of arbitrary cardinality, and relations that are invariant under given clones. For our paper, mainly the following consequence of \cite[Theorem 3]{CF-clonoids} is relevant:

\begin{theorem} \label{theorem:CF}
Let $\cloA, \cloB$ be clones on two finite sets $A$ and $B$. Then $\clodC \subseteq \clodO_{A,B}$ is a clonoid from $\cloA$ to $\cloB$ if and only if there are relational structures $\mathbb A$, $\mathbb B$ with  $\clodC = \Pol(\mathbb A, \mathbb B)$, and $\cloA \subseteq \Pol(\mathbb A)$ and $\cloB \subseteq \Pol(\mathbb B)$.
\end{theorem}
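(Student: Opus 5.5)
The statement has two directions, and the easy one, ``if'', is a direct check. Suppose $\clodC = \Pol(\mathbb A, \mathbb B)$ with $\cloA \subseteq \Pol(\mathbb A)$ and $\cloB \subseteq \Pol(\mathbb B)$. Closure under $\cloB$ from the outside is immediate. Take $f_1,\ldots,f_n \in \clodC^{(k)}$ and $g \in \cloB^{(n)}$. For tuples $\va_1,\ldots,\va_k \in R^{\mathbb A}$, each tuple $f_j(\va_1,\ldots,\va_k)$ lies in $R^{\mathbb B}$. Since $g$ preserves $R^{\mathbb B}$, so does the coordinatewise application of $g$ to these tuples. Closure under $\cloA$ from the inside works the same way. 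For $f \in \clodC^{(k)}$ and $g_1,\ldots,g_k \in \cloA^{(n)}$, and tuples $\va_1,\ldots,\va_n \in R^{\mathbb A}$, each $g_i(\va_1,\ldots,\va_n)$ lies in $R^{\mathbb A}$ because $g_i \in \Pol(\mathbb A)$. Applying $f$ to these $k$ tuples then lands in $R^{\mathbb B}$.

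For the ``only if'' direction, I would build the relational structures directly from $\clodC$, in the spirit of Pippenger's Galois connection. For each $n$, enumerate $A^n = \{\vx_1,\ldots,\vx_{m}\}$ with $m = |A|^n$. Let $R_n^{\mathbb A} \subseteq A^{m}$ be the set of tuples $(g(\vx_1),\ldots,g(\vx_m))$ with $g \in \cloA^{(n)}$; these are the graphs of the $n$-ary members of $\cloA$. Let $R_n^{\mathbb B} \subseteq B^m$ be the set of tuples $(f(\vx_1),\ldots,f(\vx_m))$ with $f \in \clodC^{(n)}$. Taking all $n\in\N$ as the language, I then need three verifications.

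\begin{enumerate}
\item $\cloA \subseteq \Pol(\mathbb A)$. If $h\in\cloA^{(k)}$, applying $h$ coordinatewise to the graphs of $g_1,\ldots,g_k \in \cloA^{(n)}$ gives the graph of $h\circ(g_1,\ldots,g_k)\in\cloA^{(n)}$.
\item $\cloB \subseteq \Pol(\mathbb B)$. This is the same computation, now using closure of $\clodC$ under outer composition with $\cloB$.
\item $\clodC \subseteq \Pol(\mathbb A,\mathbb B)$. For $f\in\clodC^{(k)}$ and $g_1,\ldots,g_k\in\cloA^{(n)}$, $f$ maps the graphs of the $g_i$ to the graph of $f\circ(g_1,\ldots,g_k)$, which lies in $\clodC^{(n)}$ by inner closure.
\end{enumerate}

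The step that needs an actual idea is the reverse inclusion $\Pol(\mathbb A,\mathbb B)\subseteq\clodC$. Let $f\in\Pol(\mathbb A,\mathbb B)$ be $n$-ary. Apply $f$ to the $n$ projections $\pi^n_1,\ldots,\pi^n_n\in\cloA^{(n)}$, whose graphs lie in $R_n^{\mathbb A}$. The result is the tuple $(f(\vx_1),\ldots,f(\vx_m))$, because the $j$-th coordinates of the projection graphs form exactly $\vx_j$. This tuple is the graph of $f$ itself, and it must lie in $R_n^{\mathbb B}$, so $f\in\clodC^{(n)}$. Here projections belonging to $\cloA$ is essential.

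Finiteness of $A$ and $B$ is used only to keep each relation of finite arity $|A|^n$, matching the definition of relational structure; the language is allowed to be infinite. I expect no real obstacle beyond this bookkeeping.
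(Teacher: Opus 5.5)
Your proof is correct, and it is essentially the standard Pippenger/Couceiro--Foldes argument. The paper does not reprove the theorem but cites \cite[Theorem 3]{CF-clonoids}; your structures are exactly the ones the paper uses in Lemma \ref{lemma:fingen}, namely $\mathbb A=(A,\cloA^{(n)})$ and $\mathbb B_{\clodC}=(B,\clodC^{(n)})$ with $\cloA^{(n)}$ and $\clodC^{(n)}$ read as $|A|^n$-ary relations, except that you take all arities $n$ at once, and your key step of evaluating $f$ at $\pi_1^n,\ldots,\pi_n^n$ to recover its graph is the same one used there.
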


\subsection{Modules and vector spaces}
Next, we discuss some notation for modules and vector spaces. Rings in this paper are always assumed to be unital, i.e. every ring $\algR = (R,+,-,0,\cdot, 1)$ contains a multiplicative identity $1$. All modules in this paper are going to be \emph{left} modules. More precisely, an $\algR$-module is a one-sorted algebra $\algA = (A,+,0,-,(r)_{r \in \algR})$, where $(A,+,0,-)$ is the underlying abelian group, and every ring element is identified with the unary function $r(x) = rx$.

Note that for an $\mathbf R$-module $\algA$, $\Clo(\algA)$ consists exactly of the functions given by linear combinations over $R$, i.e., $t(x_1,\ldots,x_n) = \sum_{i=1}^n r_i x_i$ with $r_1,\ldots,r_n \in R$. Using matrix multiplication, we will also write $t(\vx) = \vr^T \vx$ for short, where $\vr = (r_1,\ldots,r_n)^T \in R^n$, and $\vx = (x_1,\ldots,x_n)^T \in A^n$. More generally, note that a function $t\colon A^n \to A^m$ is in $t \in (\Clo(\mathbf A)^{(n)})^m$ if and only if $t(\vx) = A \vx$, where $A \in R^{m\times n}$.

If $I = \{ r\in R \mid \forall a \in A \colon r(a) = 0 \}$ denotes annihilator of an $\algR$-module $\algA$ and $R'=R/I$, then $\algA$ is term equivalent to the module $\algA' = (A,+,0,-,(r)_{r \in \algR'})$, with $(r+I)(a) = r(a)$. Thus, every module is term equivalent to a faithful module.

If $\mathbf F$ is a field, then, following the convention for modules, $\mathbf A = (F^k, +,0,-,(r)_{r \in F})$ is the $k$-dimensional vector space over $\mathbf F$. We are going to slightly abuse notation, and also use the notation $\algA = \F^k$ (although $\algA$ is not a direct power of $\F$ in the universal-algebraic sense). We consider the elements of $\algA$ as $k$-dimensional \emph{row} vectors $\vx^T = (x_1,\ldots,x_k)$ with $x_1,\ldots,x_k \in F$. We further identify tuples $(\vx_1^T,\ldots,\vx_n^T) \in (A)^n$ with the matrix $X \in \mathbf F^{n \times k}$, whose $i$-th row is equal to $\vx_i^T$. We choose this notational convention, as it is consistent with what we already established for modules. In particular, every term operation $t \in \Clo(\mathbf A)^{(n)}$, given by a linear combination $t(\vx_1^T,\ldots,\vx_n^T) = \sum_{i=1}^n a_i \vx_i^T$ can be compactly written as $t(X) = \va^T X$. Moreover, an operation $t\colon A^n \to A^m$ is in $t \in (\Clo(\mathbf A)^{(n)})^m$ if and only if $t(X) = AX$ for a matrix $A \in F^{m\times n}$.

We next recall some standard definitions from linear algebra:

\begin{definition}
For a ring $\algR$ and a matrix $X \in R^{k \times l}$, the rank $\rk(X)$ of $A$ is defines as the minimal $n \in \N$ such that $X = AB$ for some $A \in  R^{k \times n}$, $U \in R^{n \times l}$. If $n = \rk(X)$, we are going to refer to any decomposition $X = AB$ with $A \in  R^{k \times n}$, $U \in R^{n \times l}$ as \emph{rank factorization} of $X$.
\end{definition}

In the case of matrices over a field, $\rk(X)$ is equal to all the usual definition of $\rk(X)$ as column or row rank (see e.g.~\cite[Section 5.4]{BR-linearalgebra}). Moreover, the following well-known facts hold:

\begin{lemma}\label{lemma:the wellknown remark}
Let $\F$ be a field, let $k,l\in\N$, let $X \in F^{k \times l}$ and let $n$ be the rank of $X$. If $X = AU = A'U'$ are two distinct rank factorizations of $X$ then there exists an invertible matrix 
$T \in F^{n\times n}$, 
with $A' = AT$ and $U' = T^{-1}U$.
\end{lemma}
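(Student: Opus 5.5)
Since $X = AU$ is a rank factorization with $n = \rk(X)$, the first step is to show that $A \in F^{k\times n}$ has full column rank $n$ and $U \in F^{n\times l}$ has full row rank $n$. Indeed, $\rk(X) \le \min(\rk(A),\rk(U))$, because the column space of $X$ lies in that of $A$ and the row space of $X$ lies in that of $U$. Both $A$ and $U$ have rank at most $n$, so both have rank exactly $n$. The same argument applies to $A'$ and $U'$. In particular, the column spaces of $A$, $A'$ and $X$ are all equal, since $\operatorname{col}(X) \subseteq \operatorname{col}(A)$ and both have dimension $n$. Likewise, the row spaces of $U$, $U'$ and $X$ coincide. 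Here I use that, over a field, the minimal-factorization rank agrees with the usual column and row rank, as recalled before the lemma.

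Next I construct $T$. Since $\operatorname{col}(A') = \operatorname{col}(A)$ and the $n$ columns of $A$ form a basis of this space, each column of $A'$ is a unique linear combination of the columns of $A$. Hence $A' = AT$ for a unique $T \in F^{n\times n}$. Symmetrically, $A = A'S$ for some $S\in F^{n\times n}$. Then $A = ATS$. Because $A$ has full column rank, it is left-cancellable: it has a left inverse $L$ with $LA = I_n$, obtained by extending the columns to a basis or by taking $(A^TA)^{-1}A^T$ appropriately. Cancelling gives $TS = I_n$, so $T$ is invertible.

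Finally I check that $U' = T^{-1}U$. We have $AU = X = A'U' = ATU'$. Cancelling $A$ on the left, via the left inverse $L$, gives $U = TU'$, hence $U' = T^{-1}U$.

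The only step needing care is the left-cancellability of a full-column-rank matrix over a field. This is standard: $A$ induces an injective linear map $F^n \to F^k$, which has a linear left inverse. Nothing here is a serious obstacle. The word "distinct" in the statement plays no role, since the argument works for any two rank factorizations.
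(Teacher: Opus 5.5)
Your proof is correct. The paper gives no proof of this lemma; it states it as a well-known linear algebra fact, relying on the equality of the factorization rank with the usual row/column rank over a field. Your argument is the standard one that fills that gap: both factors have full rank, the column spaces of $A$, $A'$ and $X$ coincide, $A'=AT$, and $A$ is left-cancellable.

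One small correction: drop the suggestion $L=(A^TA)^{-1}A^T$. Over general fields, in particular the finite fields used throughout the paper, $A^TA$ can be singular even when $A$ has full column rank. For example, $A=(1,1)^T$ over $\F_2$ gives $A^TA=0$. The left inverse you also mention, obtained by extending the columns of $A$ to a basis of $F^k$ (equivalently, a linear left inverse of the injective map $F^n\to F^k$), is what makes the cancellation valid.
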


For a matrix $X \in \F^{k \times l}$ over a field $\F$ we let $C(X) = \{ X\va \mid \va \in F^l \}$ denote the \emph{columnspace of $X$}.


For any linear subspace $V\leq \F^k$, we are going to write $V^{\bot} = \{\vy \in \F^k \mid \forall \vx \in V: \vy^T \vx = 0 \}$ for the space of vectors `orthogonal' to $V$. Note that $(V^{\bot})^{\bot} = V$, $\dim(V) + \dim(V^\bot) = k$ and $(V\cap W)^\bot = V^{\bot} + W^{\bot}$ hold. However (unlike in an actual inner product space) $V$ and $V^{\bot}$ need not be complementary.

\section{Uniformly generated functions}\label{sec:unifgen}

In this section, we discuss sets of operations that are \emph{uniformly generated by their $n$-ary minors}, a notion that was first introduced in~\cite{MW-clonoidsmodules}. In particular, we are going to provide a general criterion for all clonoids between clones $\cloA$ and $\cloB$ to be generated by their $n$-ary part (Corollary \ref{corollary:unigen}). We then discuss how this criterion is preserved under some basic constructions, and show how it can be used to reprove several known results.


\begin{definition}
Let $\cloA$ be a clone on a set $A$. For $k,l,n \in \N$, let $R_{n}^{k,l}(\cloA)$ be the set of all the operations $r \in (\cloA^{(k)})^l$ such that there exist $v \in (\cloA^{(n)})^l$ and $w \in (\cloA^{(k)})^n$ such that 
$$r(\vx) = \begin{bmatrix} r_1(\vx)\\ \vdots \\ r_l(\vx) \end{bmatrix} = \begin{bmatrix} v_1 (w_1(\vx), \ldots, w_n(\vx))\\ \vdots \\ v_l (w_1(\vx), \ldots, w_n(\vx)) \end{bmatrix} = v \circ w (\vx).$$
We further define $R_n(\cloA) = \bigcup_{l,k} R_{n}^{k,l}(\cloA)$. For an algebra $\algA$, we will also use the notation $R^{k,l}_n(\algA) = R^{k,l}_n(\Clo(\algA))$ and $R_n(\algA) = R_n(\Clo(\algA))$.
\end{definition}

In words, $R^{k,l}_n(\cloA)$ consists of all operations in $(\cloA^{(k)})^l$ that factor through $\cloA^{(n)}$. As it was already pointed out in~\cite{MW-clonoidsmodules}, it follows straightforwardly from the definition that, for an $\algR$-module $\algA$:
$$R_{n}^{k,l}(\algA) = \{ f \colon A^k \to  A^l \mid \exists M \in R^{l \times k} : \rk(M) \leq n, f(\vx) = M \vx\}.$$
So $R_{n}(\algA)$ corresponds to the matrices over $\algR$ of rank at most $n$. Thus, the following lemma may be considered as a generalization of the fact that $\rk(MN) \leq \min(\rk(M),\rk(N))$ for matrices $M$,$N$ over a given ring:
\begin{lemma} \label{lemma:rkprod}
Let $\cloA$ be a clone. If $s \in R_n^{j,k}(\cloA)$,$r \in R_m^{k,l}(\cloA)$ then $r \circ s \in R_{\min (n,m)}^{j,l}(\cloA)$.
\end{lemma}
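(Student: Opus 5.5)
We argue directly from the definition of $R_n^{k,l}(\cloA)$, producing the required factorization twice and using the smaller one. Fix decompositions $s = v \circ w$ with $w \in (\cloA^{(j)})^n$ and $v \in (\cloA^{(n)})^k$, and $r = v' \circ w'$ with $w' \in (\cloA^{(k)})^m$ and $v' \in (\cloA^{(m)})^l$. Since $\cloA$ is a clone, composing a tuple of $a$-ary operations with a tuple of $b$-ary operations (of the right lengths) again gives a tuple of $b$-ary operations in $\cloA$. This is just closure under composition, applied coordinatewise. We use this observation repeatedly.

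\emph{Factorization through $n$.} Write $r \circ s = (r \circ v) \circ w$. Here $r \circ v$ is an $l$-tuple of $n$-ary operations in $\cloA$, since each $r_i \in \cloA^{(k)}$ is composed with $v_1,\ldots,v_k \in \cloA^{(n)}$. Also $w \in (\cloA^{(j)})^n$. So $r \circ s \in R_n^{j,l}(\cloA)$.

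\emph{Factorization through $m$.} Write $r \circ s = v' \circ (w' \circ s)$. Here $w' \circ s$ is an $m$-tuple of $j$-ary operations in $\cloA$, and $v' \in (\cloA^{(m)})^l$. So $r \circ s \in R_m^{j,l}(\cloA)$.

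Finally, $R_{\min(n,m)}^{j,l}(\cloA)$ is just one of these two sets, namely whichever of $n,m$ is smaller. So $r\circ s$ lies in it, and no monotonicity of $R_n$ in $n$ is needed.

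The only point requiring care is bookkeeping of arities and tuple lengths in the compositions $r\circ v$ and $w'\circ s$. No step is a genuine obstacle. For modules, this reduces to $\rk(MN) \le \min(\rk M, \rk N)$ via $MN = (MA)B$ and $MN = A'(B'N)$.
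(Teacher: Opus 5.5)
Your proof is correct and follows the paper's argument. Both factor $r\circ s$ once through the $n$-ary middle layer of $s$ and once through the $m$-ary middle layer of $r$, using only closure of $\cloA$ under composition. Your final step is slightly cleaner: you observe that $R_{\min(n,m)}^{j,l}(\cloA)$ is literally one of the two sets, whereas the paper appeals to $R_n^{j,l}(\cloA)\cap R_m^{j,l}(\cloA)=R_{\min(n,m)}^{j,l}(\cloA)$, which tacitly uses monotonicity in $n$.
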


\begin{proof}
By definition, there are $u \in (\cloA^{(k)})^m, t \in (\cloA^{(m)})^l$ such that $r = t \circ u$, and there are $w \in (\cloA^{(j)})^n, v \in (\cloA^{(n)})^k$ such that $s = v\circ w$. Thus $r \circ s = t \circ (u \circ v \circ w)$. Since $t \in (\cloA^{(m)})^l$, and $(u \circ v \circ w) \in (\cloA^{(j)})^m$, we get that $r \circ s \in R_{m}^{j,l}(\cloA)$. On the other hand $r \circ s = (t \circ u \circ v) \circ w$ similarly implies that $r \circ s \in R_{n}^{j,l}(\cloA)$. Thus $r \circ s \in R_{n}^{j,l}(\cloA) \cap R_{m}^{j,l}(\cloA) = R_{\min(n,m)}^{j,l}(\cloA)$.
\end{proof}

By the following observation, the operation in $R_n(\cloA)$ are helpful when trying to characterize the clonoids generated by their $n$-ary elements:

\begin{lemma} \label{lemma:ngen}
Let $\cloA, \cloB$ be clones, $f \colon A^l \to B$, $g \colon A^k \to B$, and $\clodC = \langle f \rangle_{\cloA, \cloB}$. Then $g \in \langle \clodC^{(n)}\rangle_{\cloA, \cloB}$ if and only if there are $m \in \N$, $s \in \cloB^{(m)}$, and $r_1\ldots,r_m \in R_{n}^{k,l}(\cloA)$ such that
\begin{equation}\label{primaequazionelemma18}
    g = s \circ (f \circ r_1, \ldots, f \circ r_m).
\end{equation}
In particular, $\clodC = \langle \clodC^{(n)} \rangle_{\cloA, \cloB}$ if and only if there are $m \in \N, s \in \cloB^{(m)}$ and $r_1\ldots,r_m \in R_{n}^{k,k}(\cloA)$ such that
\begin{equation}f = s \circ (f \circ r_1, \ldots, f \circ r_m). \label{eq:ngen}\end{equation}
\end{lemma}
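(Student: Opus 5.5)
The plan is to first describe the clonoid generated by a single function explicitly, and then read off both directions of the equivalence from that description. Since $\cloB$ is a clone and $\cloA$ is a clone, one checks that
\[
\langle f \rangle_{\cloA,\cloB}^{(j)} = \{\, s\circ (f\circ u_1,\ldots,f\circ u_m) \mid m\in\N,\ s\in \cloB^{(m)},\ u_i \in (\cloA^{(j)})^l \,\}.
\]
The set on the right contains $f$. It is closed under post-composition with $\cloB$, because $\cloB$ is closed under composition and we can merge the inner lists into one list. It is closed under pre-composition with $\cloA$, because $(f\circ u_i)\circ w = f\circ (u_i\circ w)$ and $u_i\circ w\in(\cloA^{(j')})^l$.

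The same argument applied to a generating set $F$ describes $\langle F\rangle_{\cloA,\cloB}$ as all $s\circ(h_1\circ w_1,\ldots,h_m\circ w_m)$ with $h_i\in F$ and $w_i$ tuples of $\cloA$-operations. Now take $F=\clodC^{(n)}$. Each $h\in\clodC^{(n)}$ has the form $s'\circ(f\circ u_1,\ldots,f\circ u_{m'})$ with $u_i\in(\cloA^{(n)})^l$. Hence a $k$-ary $g\in\langle\clodC^{(n)}\rangle$ is $s\circ$ of terms
\[
h_i\circ w_i = s_i'\circ(f\circ (u_{i1}\circ w_i),\ldots),
\]
where $w_i\in(\cloA^{(k)})^n$. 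Each $u_{ij}\circ w_i$ lies in $R^{k,l}_n(\cloA)$ by definition, with $v=u_{ij}$ and $w=w_i$. Flattening the outer $\cloB$-compositions into a single $s\in\cloB^{(m)}$ then gives~\eqref{primaequazionelemma18}.

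For the converse, suppose $r_i = v_i\circ w_i$ with $v_i\in(\cloA^{(n)})^l$ and $w_i\in(\cloA^{(k)})^n$. Then $f\circ v_i\in\clodC^{(n)}$, and $f\circ r_i=(f\circ v_i)\circ w_i$ lies in $\langle\clodC^{(n)}\rangle$. Applying $s\in\cloB$ keeps us inside, so $g\in\langle\clodC^{(n)}\rangle_{\cloA,\cloB}$.

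For the ``in particular'' statement, note that $\langle\clodC^{(n)}\rangle\subseteq\clodC$ always holds. Equality holds iff $f\in\langle\clodC^{(n)}\rangle$, since $\clodC$ is generated by $f$. Applying the first part with $g=f$ and $k=l$ gives exactly~\eqref{eq:ngen}.

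The only step that needs real care is the normal-form description of the generated clonoid. Specifically, one must check that nested $\cloB$-compositions flatten into a single $s\in\cloB^{(m)}$: write $s\circ(s_1\circ(\ldots),\ldots)$ as one $\cloB$-operation applied to the concatenated list, using projections and the closure of $\cloB$. I expect this bookkeeping to be the main, though routine, obstacle.
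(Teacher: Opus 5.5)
Your proposal is correct and follows essentially the same route as the paper. Both arguments identify $\langle \clodC^{(n)}\rangle_{\cloA,\cloB}$ with the clonoid generated by the $n$-ary minors $f\circ v$, $v\in(\cloA^{(n)})^l$, observe that their $k$-ary $\cloA$-minors are exactly the $f\circ r$ with $r\in R_n^{k,l}(\cloA)$, and reduce the second statement to $f\in\langle\clodC^{(n)}\rangle_{\cloA,\cloB}$; your explicit normal form for the generated clonoid simply spells out the bookkeeping the paper leaves implicit.
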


\begin{proof}
Note that $g \in \langle \clodC^{(n)} \rangle_{\cloA, \cloB}$ if and only if it is in the $(\cloA, \cloB)$-clonoid generated by all functions of the form $f \circ (v_1,\ldots,v_k)$, for $v_i \in \cloA^{(n)}$. If we further compose with operations from $\cloA^{(k)}$ from the inside, we obtain all operations of the form $f \circ r$ for $r\in R_n^{k,l}(\cloA)$. Thus $g \in \langle \clodC^{(n)} \rangle_{\cloA, \cloB}$ if and only if there are 
$s \in \cloB^{(m)}$ and $r_1\ldots,r_m \in R_{n}^{k,l}(\cloA)$ with $g = s \circ (f \circ r_1, \ldots, f \circ r_m)$.

For the second statement, note that $\clodC = \langle \clodC^{(n)} \rangle_{\cloA, \cloB}$ if and only if $f \in \langle \clodC^{(n)} \rangle_{\cloA, \cloB}$. The result then straightforwardly follows from \eqref{primaequazionelemma18}.
\end{proof}

Note that it is enough to state $r_i \in R_n(\cloA)$ in both statements of Lemma \ref{lemma:ngen}, as e.g. $r_i \in R_{n}^{k,l}(\cloA)$ is implied by the arities of $f$ and $g$ and the equation \eqref{primaequazionelemma18}. Thus often, when clear from the context, we are going to omit these superscripts.

If the formula \eqref{eq:ngen} holds independently of the choice of $f\colon A^k \to B$ then we say that $B^{A^k}$ is \emph{uniformly generated} by $n$-ary minors. More precisely, following \cite{MW-clonoidsmodules}, we define:

\begin{definition}\label{def:unifgen}
Let $\cloA$, $\cloB$ be clones. We say that $U \subseteq B^{A^k}$ is \emph{uniformly generated by its $n$-ary $(\cloA, \cloB)$-minors}, if there are $s \in \cloB^{(m)}$ and $r_1\ldots,r_m \in R_{n}^{k,k}(\cloA)$ such that for every $f \in U$:
\begin{equation} \label{eq:gen} f = s \circ (f \circ r_1, \ldots, f \circ r_m).
\end{equation}
More generally, for $U \subseteq \clodO_{A,B}$ we say that $U$ is \emph{uniformly generated by its $n$-ary $(\cloA, \cloB)$-minors} if $U^{(k)}$ is uniformly generated by its $n$-ary $(\cloA, \cloB)$-minors, for every $k \in \N$. 
For short, we will also write that $U$ is $(n,\cloA,\cloB)$-UG.
\end{definition}

\begin{definition} \label{def:unifrep}
Let $\cloA$, $\cloB$ be clones. We say that a (partial) operation $I \colon B^{A^k} \to B^{A^l}$ \emph{can be uniformly represented by $n$-ary $(\cloA, \cloB)$-minors}, if there are $s \in \cloB^{(m)}$ and $r_1\ldots,r_m \in R_{n}^{l,k}(\cloA)$ such that for all $f$ in the domain of $I$
\begin{align} \label{eq:gen2} I(f) = s \circ (f \circ r_1, \ldots, f \circ r_m).
\end{align}
If $I$ can be uniformly represented by $n$-ary $(\cloA, \cloB)$-minors for some $n$, we simply say that $I$ \emph{can be uniformly represented (over $(\cloA, \cloB)$)}. For short, we will also write that $I$ is $(\cloA,\cloB)$-UR (or $(n,\cloA,\cloB)$-UR), if $I$ can be represented by ($n$-ary) $(\cloA, \cloB)$-minors.
\end{definition}

\begin{definition} \label{def:unirepformula}
If we consider term clones $\cloA = \Clo(\algA)$ and $\cloB = \Clo(\algB)$ in Definitions \ref{def:unifgen} and \ref{def:unifrep} we will also refer to \emph{uniform generation/representation by $n$-ary $(\algA,\algB)$-minors}. If an operation $I \colon \clodO_{A,B}^{(l)} \to \clodO_{A,B}^{(k)}$ an be uniformly represented by $n$-ary $(\algA, \algB)$-minors, this means that there are term operations $s^{\algB}$, and $r_1^{\algA}\ldots,r_m^{\algA} \in R_{n}^{k,l}(\algA)$, such that 
\begin{align} \label{eq:gen3} I(f) = s^{\algB} \circ (f \circ r_1^{\algA}, \ldots, f \circ r_m^{\algA}).
\end{align}
If, instead we consider the $m$-ary term $s$ (in the language of $\algB$) and $r_1, \ldots, r_m$ (as $k$-tuples of $l$-ary) terms (in the language of $\algA$), then we are going to refer to the (many sorted) term
\begin{align} \label{eq:gen4} I(f) = s \circ (f \circ r_1, \ldots, f \circ r_m),
\end{align}
as a \emph{representation (formula)} of the operation $I$ (by $n$-ary $(\algA, \algB)$-minors).
\end{definition}

In practice, we are sometimes not going to distinguish between an operation and its representation, but this should never cause any confusion.

Let us observe some basic facts:

\begin{lemma} \label{lemma:urobs}
Let $\cloA, \cloB$ be clones. Then
\begin{enumerate}
\item $U \subseteq \clodO_{A,B}^{(k)}$ is $(n,\cloA,\cloB)$-UG iff $\id\restrict U$ is $(n,\cloA,\cloB)$-UR.
\item If an operation $I$ is $(n,\cloA,\cloB)$-UR and $m \geq n$, then $I$ is $m$-ur
\item Assume $I_1,\ldots,I_r \colon \clodO_{A,B}^{(k)} \to \clodO_{A,B}^{(l)}$ are $(n,\cloA,\cloB)$-UR, and $s \in \cloB^{(r)}$.\\ Then $I(f) = s \circ (I_1(f),\ldots,I_i(f))$ is $(n,\cloA,\cloB)$-UR.
\item Assume $I_1\colon \clodO_{A,B}^{(k_1)} \to \clodO_{A,B}^{(k_2)}$ is $(n_1,\cloA,\cloB)$-UR and $I_2\colon \clodO_{A,B}^{(k_2)} \to \clodO_{A,B}^{(k_3)}$ is $(n_2,\cloA,\cloB)$-UR\\ Then $I_2\circ I_1\colon \clodO_{A,B}^{(k_1)} \to \clodO_{A,B}^{(k_3)}$ is $(n,\cloA,\cloB)$-UR, for $n = \min(n_1,n_2)$.
\end{enumerate}
\end{lemma}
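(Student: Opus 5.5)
All four items follow by unfolding Definitions \ref{def:unifgen} and \ref{def:unifrep} and composing representation formulas. The only non-trivial ingredient is Lemma \ref{lemma:rkprod}, which controls the rank of composed inner operations.

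For (1), compare the two definitions with $l=k$. $U$ is $(n,\cloA,\cloB)$-UG exactly when there are $s\in\cloB^{(m)}$ and $r_i\in R_n^{k,k}(\cloA)$ with $f = s\circ(f\circ r_1,\dots,f\circ r_m)$ for all $f\in U$. This says precisely that the partial map $\id\restrict U\colon f\mapsto f$ satisfies \eqref{eq:gen2}, so both directions are immediate.

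For (2), it suffices to show $R_n^{l,k}(\cloA)\subseteq R_m^{l,k}(\cloA)$ for $m\ge n$. Suppose $r=v\circ w$ with $w\in(\cloA^{(l)})^n$ and $v\in(\cloA^{(n)})^k$. Pad $w$ to $w'=(w_1,\dots,w_n,\pi_1^l,\dots,\pi_1^l)\in(\cloA^{(l)})^m$. Define $v'_j=v_j\circ(\pi_1^m,\dots,\pi_n^m)\in\cloA^{(m)}$, i.e.\ $v_j$ with $m-n$ dummy variables added. Then $v'\circ w'=v\circ w=r$, and this uses only that clones contain projections and are closed under composition. So the same formula witnesses $(m,\cloA,\cloB)$-UR.

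For (3), take representations $I_j(f)=s_j\circ(f\circ r^j_1,\dots,f\circ r^j_{m_j})$ with $s_j\in\cloB^{(m_j)}$ and $r^j_i\in R_n^{l,k}(\cloA)$. Let $M=\sum_j m_j$ and list all the $r^j_i$ as $r_1,\dots,r_M$. Define $s_j'\in\cloB^{(M)}$ as $s_j$ applied to the block of projections picking out the $j$-th group of $m_j$ coordinates. Then $s':=s\circ(s_1',\dots,s_r')\in\cloB^{(M)}$ by closure of $\cloB$ under composition, and $I(f)=s'\circ(f\circ r_1,\dots,f\circ r_M)$.

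Item (4) is the main one. Write
\[
I_1(f)=s\circ(f\circ r_1,\dots,f\circ r_a),\qquad I_2(g)=t\circ(g\circ q_1,\dots,g\circ q_b),
\]
with $r_i\in R_{n_1}^{k_2,k_1}(\cloA)$ and $q_j\in R_{n_2}^{k_3,k_2}(\cloA)$. Substituting $g=I_1(f)$ gives
\[
I_1(f)\circ q_j = s\circ(f\circ(r_1\circ q_j),\dots,f\circ(r_a\circ q_j)).
\]
This holds because precomposition commutes with postcomposition by $s$: both sides evaluate $s$ pointwise at the same arguments. By Lemma \ref{lemma:rkprod}, each $r_i\circ q_j$ lies in $R_{\min(n_1,n_2)}^{k_3,k_1}(\cloA)$. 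Hence each $I_1(f)\circ q_j$ is $(n,\cloA,\cloB)$-UR with $n=\min(n_1,n_2)$. Then $I_2\circ I_1(f)=t\circ(I_1(f)\circ q_1,\dots,I_1(f)\circ q_b)$ is $(n,\cloA,\cloB)$-UR by item (3).

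For partial operations in (4), I read the domain of $I_2\circ I_1$ as those $f$ with $I_1(f)\in\dom I_2$. The formulas hold on exactly that set, so nothing changes.

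The only step needing any care is the substitution identity in (4) and the bookkeeping of arities. Everything else is routine closure of clones under projections and composition.
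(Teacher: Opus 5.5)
Your proof is correct and follows the paper's argument. Items (1) and (3) come from unfolding the definitions and merging the outer $\cloB$-terms, (2) from $R_n(\cloA)\subseteq R_m(\cloA)$ (which you verify explicitly by padding with projections and dummy variables), and (4) from substituting one representation into the other and applying Lemma~\ref{lemma:rkprod} to the inner maps $r_i\circ q_j$. The only difference is cosmetic: in (4) you get the combined outer term by invoking item (3), while the paper writes the composite term $g_2(g_1(\dots),\dots,g_1(\dots))$ explicitly.
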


\begin{proof}
(1) directly follow from the definitions. (2) straightforwardly follows from the observation that $R_n(\cloA) \subseteq R_m(\cloA)$ for $n \leq m$. To see (3), for every $i \in [r]$ let us fix a uniform representation of $I_i$, i.e. some $g_i \in \cloB^{(m_i)}$, $r_{i,j_i} \in R_n(\cloA)$ for all $j_i \in [m_i]$ such that
\begin{equation*}
  I_i(f) = g_i \circ (f \circ r_{i,1},\ldots, f \circ r_{i,m_i}),   
\end{equation*}
for all $f \in \dom(I_i)$. Let $g = s \circ (g_1,\dots,g_n) \in \cloB$. Then
\begin{equation*}
  I(f) = g \circ (f \circ r_{1,1},\ldots, f \circ r_{1,m_1}, \ldots f \circ r_{r,1},\ldots, f \circ r_{r,m_r})   
\end{equation*}
holds for all $f \in \bigcap_{i \in [r]} \dom(I_i)$. This formula witnesses that $I$ can be uniformly represented by $n$-ary ($\cloA$, $\cloB$)-minors.

Last we prove (4). Let us take uniform representations of $I_1$, $I_2$, i.e. functions $g_1,g_2 \in\cloB$, $r_1,\ldots,r_m \in R_{n_1}(\cloA)$, and $s_1,\ldots,s_l \in R_{n_2}(\cloA)$ such that
\begin{align*}
I_1(f) &= g_1 \circ (f \circ r_1,\ldots, f \circ r_{m}) \\
I_2(h) &= g_2 \circ (h \circ s_1,\ldots, f \circ s_{l}) 
\end{align*}
for all $f \in \dom(I_1)$ and $h \in \dom(I_2)$. Then, for $f \in \dom(I_1) \cap I_1^{-1}(\dom(I_2))$ we obtain:
\begin{align*}
I_2 \circ I_1(f) &= g \circ (f \circ r_1 \circ s_1, f \circ r_2 \circ s_1 \ldots, f \circ r_{m-1} \circ s_{l}, f \circ r_m \circ s_l), \text{ where } \\
g(x_1,x_2,\ldots,x_{mn}) &= g_2(g_1(x_1,\ldots,x_m),\ldots, g_1(x_{(l-1)m+1},\ldots, x_{lm})).
\end{align*}
Lemma \ref{lemma:rkprod} implies that $r_i\circ s_j \in R_n(\cloA)$, for all $i \in [m]$ and $j \in [l]$, thus $I_2 \circ I_1$ can be represented by $n$-ary minors.
\end{proof}

We are ready to prove one of the main theorems of the section which enables to reduce the problem of uniformly generate the $k$-ary part of an ($\cloA$,$\cloB$)-clonoid by $n$-ary minors, with $k>n$, to uniformly generate its $(n+1)$-ary part by $n$-ary minors.

\begin{theorem} \label{theorem:unigen}
Let $\cloA$, $\cloB$ be clones, $n \geq 1$, and let $\cloA_{const} = \Clo(\cloA \cup \{a\}_{a\in A})$ be the clone generated by $\cloA$ and all constant operations over its set. Let $\clodC$ a clonoid from $\cloA_{const}$
to $\cloB$. Then the following are equivalent:
\begin{enumerate}
\item\label{ite:th_unigen1} $\clodC^{(n+1)}$ is $(n,\cloA,\cloB)$-UG
\item\label{ite:th_unigen2} $\clodC^{(k)}$ is $(n,\cloA,\cloB)$-UG for some $k>n$
\item\label{ite:th_unigen3} $\clodC$ is $(n,\cloA,\cloB)$-UG
\item\label{ite:th_unigen4} $\forall k,l \in \N$, every partial operation $I \colon \clodC^{(k)} \to \clodC^{(l)}$ 
that is $(\cloA,\cloB)$-UR is $(n,\cloA,\cloB)$-UR.
\end{enumerate}
\end{theorem}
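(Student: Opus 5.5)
The plan is to prove the cycle (1)$\Rightarrow$(3)$\Rightarrow$(4)$\Rightarrow$(1), and to handle (2) separately via (3)$\Rightarrow$(2)$\Rightarrow$(1). The substantive implication is (1)$\Rightarrow$(3), and it is the only place where the constants in $\cloA_{const}$ are used.

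The easy directions go as follows.

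(3)$\Rightarrow$(2) is trivial.

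For (2)$\Rightarrow$(1), take $f \in \clodC^{(n+1)}$ and $k>n$.
\begin{itemize}
\item Let $\pi\colon A^k\to A^{n+1}$ be the projection onto the first $n+1$ coordinates. Let $\iota\colon A^{n+1}\to A^k$ be given by $\vx\mapsto(\vx,x_1,\ldots,x_1)$. Both are tuples of projections, so they lie in $\cloA$, and $\pi\circ\iota=\id$.
\item Then $f\circ\pi\in\clodC^{(k)}$. The uniform formula gives $f\circ\pi=s\circ(f\circ\pi\circ r_1,\ldots,f\circ\pi\circ r_m)$.
\item Composing with $\iota$ yields $f = s\circ(f\circ(\pi\circ r_i\circ\iota))_i$. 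By Lemma~\ref{lemma:rkprod}, each $\pi\circ r_i\circ\iota\in R_n(\cloA)$.
\item The data $s$, $\pi\circ r_i\circ\iota$ do not depend on $f$, so the representation is uniform.
\end{itemize}

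For (3)$\Rightarrow$(4), suppose $I(f)=s\circ(f\circ r_1,\ldots,f\circ r_m)$ with $r_i\in R^{l,k}_N(\cloA)$ for some $N$.
\begin{itemize}
\item Substitute the uniform formula $f=s'\circ(f\circ q_1,\ldots,f\circ q_p)$, with $q_j\in R_n^{k,k}(\cloA)$, into each $f\circ r_i$.
\item This gives $f\circ r_i = s'\circ(f\circ q_j\circ r_i)_j$, and $q_j\circ r_i\in R_n(\cloA)$ by Lemma~\ref{lemma:rkprod}.
\item Equivalently, this is Lemma~\ref{lemma:urobs}(4) applied to $I\circ \id\restrict{\clodC^{(k)}}$.
\end{itemize}

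For (4)$\Rightarrow$(1), the identity on $\clodC^{(n+1)}$ is trivially $(n+1,\cloA,\cloB)$-UR: use $r_1=\id$ and $s$ a projection. By (4) it is therefore $(n,\cloA,\cloB)$-UR, which by Lemma~\ref{lemma:urobs}(1) is exactly (1).

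For (1)$\Rightarrow$(3), I will show by induction on $k$ that $\clodC^{(k)}$ is $(n,\cloA,\cloB)$-UG.

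\emph{Case $k\le n$.} The identity on $A^k$ lies in $R_n^{k,k}(\cloA)$: it factors through $A^n$ by padding with a repeated variable and then projecting. So this case is immediate.

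\emph{Case $k=n+1$.} This is the hypothesis (1).

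\emph{Inductive step, $k>n+1$.} Fix a uniform representation $h=s\circ(h\circ r_1,\ldots,h\circ r_m)$ valid for all $h\in\clodC^{(n+1)}$, with $r_i\in R_n^{n+1,n+1}(\cloA)$. Take $f\in\clodC^{(k)}$, and write its arguments as $(\vx,\vy)$ with $\vx\in A^{n+1}$ and $\vy\in A^{k-n-1}$.
\begin{itemize}
\item For every fixed $\vy$, the function $h_{\vy}(\vx)=f(\vx,\vy)$ is obtained from $f$ by substituting constants. Since $\clodC$ is a clonoid from $\cloA_{const}$, we get $h_{\vy}\in\clodC^{(n+1)}$.
\item The representation is independent of $h$, so $f(\vx,\vy)=s\bigl(f(r_1(\vx),\vy),\ldots,f(r_m(\vx),\vy)\bigr)$ holds for all $\vx,\vy$ simultaneously.
\item Each map $(\vx,\vy)\mapsto(r_i(\vx),\vy)$ lies in $\cloA$, because $r_i\in\cloA$ and the remaining coordinates are projections. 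It factors through $A^{n+(k-n-1)}=A^{k-1}$, so it belongs to $R_{k-1}^{k,k}(\cloA)$.
\item Hence $\id\restrict{\clodC^{(k)}}$ is $(k-1,\cloA,\cloB)$-UR.
\end{itemize}

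To descend from arity $k-1$ to arity $n$, write each such minor as $r_i'=v_i\circ w_i$, with $w_i\in(\cloA^{(k)})^{k-1}$ and $v_i\in(\cloA^{(k-1)})^{k}$.
\begin{itemize}
\item Then $f\circ v_i\in\clodC^{(k-1)}$.
\item The induction hypothesis gives $f\circ v_i=s''\circ(f\circ v_i\circ q_j)_j$ with $q_j\in R_n(\cloA)$, uniformly in the function.
\item Therefore $f\circ r_i'=s''\circ(f\circ v_i\circ q_j\circ w_i)_j$, and $v_i\circ q_j\circ w_i\in R_n(\cloA)$ by Lemma~\ref{lemma:rkprod}.
\item Collecting everything as in Lemma~\ref{lemma:urobs}(3) gives a uniform $n$-ary representation of $f$.
\end{itemize}

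The main obstacle is the parameter trick in the inductive step. Fixing $\vy$ requires constants, which is why $\cloA_{const}$ appears in the hypothesis. The argument also relies on the uniform formula for $\clodC^{(n+1)}$ not depending on $h$, since that is what lets a single formula work for all $\vy$ at once. Everything else is bookkeeping with Lemmas~\ref{lemma:rkprod} and~\ref{lemma:urobs}.
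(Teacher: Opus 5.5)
Your proof is correct and uses the same two ingredients as the paper. The first is freezing coordinates with constants from $\cloA_{const}$ to lift a uniform formula to a higher arity. The second is factoring the resulting minors and inserting a uniform representation of an identity map, with Lemma~\ref{lemma:rkprod} controlling the arity of the minors. The one real difference is that the roles are swapped in the inductive step of (1)$\Rightarrow$(3). The paper freezes a single coordinate and applies the induction hypothesis to obtain $(n+1)$-ary minors, then descends to $n$ using (1), packaged as the general fact that $(n+1)$-UR operations on $\clodC$ are $n$-UR. You instead freeze $k-n-1$ coordinates and apply (1) to obtain $(k-1)$-ary minors, then descend to $n$ using the induction hypothesis on $\clodC^{(k-1)}$.
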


We would like to stress that, although we assume that $\clodC$ in closed under composition with constant operations from $A$, the equivalence of items (1)-(4) in Theorem \ref{theorem:unigen} holds for $(\cloA, \cloB)$-minors, not just $(\cloA_{const}, \cloB)$-minors. 

\begin{proof}[Proof of Theorem \ref{theorem:unigen}]
The implications (\ref{ite:th_unigen4}) $\to$ (\ref{ite:th_unigen3}) $\to$ (\ref{ite:th_unigen2}) $\to$ (\ref{ite:th_unigen1}) follow straightforwardly from the observations in Lemma \ref{lemma:urobs}. 

To prove (\ref{ite:th_unigen3}) $\to$ (\ref{ite:th_unigen4}) let $I \colon \clodC^{(k)} \to \clodC^{(l)}$ be an operation that can be uniformly representable by $s$-ary $(\cloA,\cloB)$-minors for some $s  \in \N$, and let us assume that $\clodC$ is uniformly generated by $n$-ary $(\cloA,\cloB)$-minors. Lemma \ref{lemma:urobs} (1) implies that $\id \colon \clodC^{(k)} \to \clodC^{(k)}$ can be uniformly representable by $n$-ary $(\cloA,\cloB)$-minors. By Lemma \ref{lemma:urobs} (4) applied to $I = I \circ \id$, we get that $I$ can be uniformly representable by $n'$-ary $(\cloA,\cloB)$-minors, for $n' = \min(n,s)$. By Lemma \ref{lemma:urobs} (2) this finishes the proof. 

It only remains to prove the implication (\ref{ite:th_unigen1}) $\to$ (\ref{ite:th_unigen3}). So let us assume that (\ref{ite:th_unigen1}) holds. 

We first prove that any operation $I\colon \clodC^{(l)} \to \clodC^{(k)}$ that is $(n+1,\cloA,\cloB)$-UR is, in fact, already $(n,\cloA,\cloB)$-UR. Note that, by Lemma \ref{lemma:urobs} (1) it is enough to prove this claim for operations of the form $I_r(f) = f \circ r$, where $r\in R_{n+1}^{k,l}(\cloA)$. For any $r\in R_{n+1}^{k,l}(\cloA)$, by definition, there are $u \in (\cloA^{(n+1)})^{l}, v \in (\cloA^{(k)})^{n+1}$, such that $r = u \circ v$. Thus, we can write $I_r$ as the composition $I_r = I_v \circ \id \circ I_u$, where $I_u(f) = f \circ u$, $I_v(g) = g \circ u$, and $\id$ denotes the identity on $\cloC^{(n+1)}$. By (1) and Lemma \ref{lemma:urobs} (3), $\id$ can be represented by $n$-ary $(\cloA,\cloB)$-minors. By Lemma \ref{lemma:urobs} (2) and (4), also $I_r$ be represented by $n$-ary $(\cloA,\cloB)$-minors, so we are done.

In order to prove (3), we are going to show by induction on $k=1,2,\ldots$, that $\clodC^{(k)}$ is $(n,\cloA,\cloB)$-UG. Note that for $k \leq n+1$ this follows from (1) and Lemma \ref{lemma:urobs} (2). For an induction step $k \to k+1$, by the induction hypothesis, there are operations $s \in \cloB^{(m)}$ and $r_1,\ldots,r_m \in R^{{k,k}}_n(\cloA)$ such that $f  = s \circ (f \circ r_1,\ldots, f \circ r_{m})$, for every $f \in \clodC^{(k)}$. If we fix the last argument of a function $g \in \clodC^{(k+1)}$ to some constant $a_{k+1} \in A$, then $\mathbf x \mapsto g(\mathbf x, a_{k+1})$ is in $\clodC^{(k)}$, since $\clodC$ is closed under composition with $\cloA_{const}$. It follows, that for every $g \in \clodC^{(k+1)}$:
\begin{equation}
\label{eq:unigen}
g(\mathbf x, x_{k+1}) = s( g(r_1(\mathbf x), x_{k+1}), \ldots, g(r_m(\mathbf x), x_{k+1})).
\end{equation}
Note that, for every $r \in R^{k,k}_n(\algA)$, the map $(\mathbf x,x_{k+1}) \mapsto (r(\mathbf x),x_{k+1})$ is in $R^{k+1,k+1}_{n+1}(\algA)$. Thus, equation \eqref{eq:unigen} shows that the identity on $\cloC^{(k)}$ is $(n+1,\cloA,\cloB)$-UR. By our previous observation, it must also be $(n,\cloA,\cloB)$-UR. In other words, $\cloC^{(k)}$ is uniformly generated by $n$-ary $(\cloA,\cloB)$-minors, so we are done.

\end{proof}

As direct corollary of Theorem \ref{theorem:unigen} we obtain the following corollary:

\begin{corollary} \label{corollary:unigen1}
Let $\cloA$ and $\cloB$ clones on sets $A$ and $B$, and let $\clodC$ be a clonoid from $\cloA_{const}$ to $\cloB$ such that $\clodC^{(n+1)}$ is uniformly generated by $n$-ary $(\cloA, \cloB)$-minors. Then $\clodD = \langle \clodD^{(n)} \rangle_{\cloA, \cloB}$ for every subclonoid $\clodD \subseteq \clodC$. 
\end{corollary}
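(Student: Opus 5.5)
The plan is to combine Theorem \ref{theorem:unigen} with Lemma \ref{lemma:ngen}. By the implication (\ref{ite:th_unigen1}) $\to$ (\ref{ite:th_unigen3}) of Theorem \ref{theorem:unigen}, the hypothesis that $\clodC^{(n+1)}$ is uniformly generated by $n$-ary $(\cloA,\cloB)$-minors gives that $\clodC$ is $(n,\cloA,\cloB)$-UG. So for every $k$ we obtain a single formula: there are $s \in \cloB^{(m)}$ and $r_1,\ldots,r_m \in R_n^{k,k}(\cloA)$ with $f = s \circ (f\circ r_1,\ldots,f\circ r_m)$ for all $f \in \clodC^{(k)}$.

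Next I would fix a subclonoid $\clodD \subseteq \clodC$ and an arbitrary $f \in \clodD^{(k)}$. Since $f \in \clodC^{(k)}$, the uniform formula above holds for $f$. The point is that this formula uses only $f$ and $(\cloA,\cloB)$-operations, so it does not depend on the ambient clonoid. Each $f \circ r_i$ with $r_i = v_i \circ w_i$, $v_i \in (\cloA^{(n)})^k$ and $w_i \in (\cloA^{(k)})^n$, equals $(f \circ v_i)\circ w_i$. Here $f\circ v_i$ is an $n$-ary member of $\clodD$: a subclonoid of a clonoid from $\cloA_{const}$ is in particular closed under $\cloA \subseteq \cloA_{const}$ from the inside, so $f\circ v_i \in \clodD^{(n)}$. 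Hence $f \circ r_i \in \langle \clodD^{(n)}\rangle_{\cloA,\cloB}$, and composing with $s \in \cloB$ gives $f \in \langle \clodD^{(n)}\rangle_{\cloA,\cloB}$. Alternatively, this step is exactly the ``if'' direction of Lemma \ref{lemma:ngen} applied with $g=f$.

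This yields $\clodD \subseteq \langle \clodD^{(n)}\rangle_{\cloA,\cloB}$. For the reverse inclusion, $\clodD$ is itself an $(\cloA,\cloB)$-clonoid containing $\clodD^{(n)}$. I should also make sure that the word ``subclonoid'' is read so that $\clodD$ is at least an $(\cloA,\cloB)$-clonoid; both natural readings, $(\cloA_{const},\cloB)$ or $(\cloA,\cloB)$, suffice. There is no real obstacle. The only care needed is noting that uniform generation transfers from $\clodC$ to every subset of it, including sets that are not closed under constants. This works because Theorem \ref{theorem:unigen} already yields representations by $(\cloA,\cloB)$-minors, not merely $(\cloA_{const},\cloB)$-minors.
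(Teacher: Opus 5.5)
Your proof is correct and follows essentially the same route as the paper. The paper also uses Theorem \ref{theorem:unigen} to get that every $\clodC^{(k)}$ is uniformly generated by $n$-ary $(\cloA,\cloB)$-minors. It then applies Lemma \ref{lemma:ngen} to $\langle f\rangle_{\cloA,\cloB}$ for $f\in\clodC$, which amounts to the factorization $f\circ r_i=(f\circ v_i)\circ w_i$ that you carry out directly inside $\clodD$.
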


\begin{proof}
By Theorem~\ref{theorem:unigen} (\ref{ite:th_unigen3}) we have that $\mathcal C^{(k)}$ is uniformly generated by $n$-ary $(\cloA, \cloB)$-minors for every $k \in \N$. By Lemma \ref{lemma:ngen}, for every $f \in \clodC$, the clonoid $\langle f \rangle_{\cloA, \cloB}$ is generated by its $n$-ary functions. This straightforwardly implies that every subclonoid $\clodD$ of $\clodC$ is generated by $\clodD^{(n)}$.
\end{proof} 

By applying Corollary \ref{corollary:unigen1} to $\clodC = \clodO_{A,B}$ we obtain the following criterion for all $(\cloA, \cloB)$-clonoids to be generated by their $n$-ary part:

\begin{corollary} \label{corollary:unigen}
Let $\cloA$ and $\cloB$ clones on sets $A$ and $B$. If $\cloO_{A,B}^{(n+1)}$ is uniformly generated by $n$-ary $(\cloA, \cloB)$-minors, then $\clodC = \langle \clodC^{(n)} \rangle_{\cloA, \cloB}$ for every $(\cloA, \cloB)$-clonoid.
\end{corollary}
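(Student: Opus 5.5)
The plan is to apply Corollary \ref{corollary:unigen1} with $\clodC = \clodO_{A,B}$. The only hypotheses of that corollary to check are these: $\clodO_{A,B}$ is a clonoid from $\cloA_{const}$ to $\cloB$, and $\clodO_{A,B}^{(n+1)}$ is uniformly generated by $n$-ary $(\cloA,\cloB)$-minors. The second is exactly the assumption of the statement, so only the first needs an argument.

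For the first point, note that $\clodO_{A,B}$ is the set of all finitary operations from $A$ to $B$. Composing any $f \in \clodO_{A,B}^{(k)}$ from the inside with arbitrary $g_1,\ldots,g_k \in \clodO_A^{(n)}$ yields an $n$-ary map $A^n \to B$, which again lies in $\clodO_{A,B}$. Similarly, composing $f_1,\ldots,f_m \in \clodO_{A,B}^{(k)}$ from the outside with any $s \in \clodO_B^{(m)}$ yields an element of $\clodO_{A,B}^{(k)}$. In particular, $\clodO_{A,B}$ is closed under composition with $\cloA_{const} \subseteq \clodO_A$ and with $\cloB \subseteq \clodO_B$, so it is a clonoid from $\cloA_{const}$ to $\cloB$.

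Corollary \ref{corollary:unigen1} then gives $\clodD = \langle \clodD^{(n)} \rangle_{\cloA,\cloB}$ for every subclonoid $\clodD \subseteq \clodO_{A,B}$ that is an $(\cloA_{const},\cloB)$-clonoid. The remaining step, and the only one needing care, is that an arbitrary $(\cloA,\cloB)$-clonoid $\clodD$ need not be closed under constants. To handle this, I would look at the proof of Corollary \ref{corollary:unigen1} rather than its literal statement. There, constants are used only to obtain via Theorem \ref{theorem:unigen} that the ambient clonoid $\clodO_{A,B}$ is $(n,\cloA,\cloB)$-UG in every arity. After that, for an arbitrary $f \in \clodD$, Lemma \ref{lemma:ngen} gives that $f$ is generated by its $n$-ary $(\cloA,\cloB)$-minors, and these minors lie in $\clodD^{(n)}$ because $\clodD$ is closed under $(\cloA,\cloB)$-compositions. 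So the argument runs as follows:
\begin{enumerate}
\item Theorem \ref{theorem:unigen} applied to $\clodO_{A,B}$ shows that $\clodO_{A,B}^{(k)}$ is $(n,\cloA,\cloB)$-UG for all $k$.
\item For any $(\cloA,\cloB)$-clonoid $\clodD$ and $f \in \clodD^{(k)}$, equation \eqref{eq:gen} writes $f$ as $s \circ (f\circ r_1,\ldots,f\circ r_m)$ with $r_i \in R_n^{k,k}(\cloA)$.
\item Each $f \circ r_i$ lies in $\langle \clodD^{(n)}\rangle_{\cloA,\cloB}$ by Lemma \ref{lemma:ngen}, so $f \in \langle \clodD^{(n)}\rangle_{\cloA,\cloB}$.
\end{enumerate}
Together with the trivial inclusion $\langle \clodD^{(n)}\rangle_{\cloA,\cloB} \subseteq \clodD$, this gives $\clodD = \langle \clodD^{(n)} \rangle_{\cloA,\cloB}$ for every $(\cloA,\cloB)$-clonoid $\clodD$.
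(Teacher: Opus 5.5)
Your proposal is correct and follows the paper's route: the paper obtains this statement by applying Corollary~\ref{corollary:unigen1} to $\clodC = \clodO_{A,B}$. Your unpacking is exactly the paper's proof of that corollary: Theorem~\ref{theorem:unigen} gives uniform generation of $\clodO_{A,B}$ in all arities, and then equation~\eqref{eq:gen} and Lemma~\ref{lemma:ngen} handle each $f \in \clodD$. That argument only uses that $\clodD$ is an $(\cloA,\cloB)$-clonoid, so your extra care about $\clodD$ not being closed under constants is justified but changes nothing in substance.
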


If $\cloA$ and $\cloB$ are clones on \emph{finite} sets $A$ and $B$ and $\cloO_{A,B}^{(n+1)}$ is uniformly generated by $n$-ary $(\cloA, \cloB)$-minors, then Corollary~\ref{corollary:unigen} implies that there are only finitely many $(\cloA, \cloB)$-clonoids. Classifying all of them reduces to the purely combinatorial problem of determining all subsets of $\cloO_{A,B}^{(n)}$ that are stable under composition with $\cloA$ from the right, and $\cloB$ from the left. This strategy was (implicitly) already used in several classification results in the literature. In the following subsections, we are first going to discuss some other nice consequences of uniform generation, and then review the existing literature under the viewpoint of uniform generation.

\subsection{Basic properties}
For two clones $\cloA_1$, $\cloA_2$ on sets $A_1$, respectively $A_2$, let us define their \emph{direct product} $\cloA = \cloA_1 \times \cloA_2$ as the clone on the set $A=A_1\times A_2$, such that, if we identify $A^n $ with $A_1^{n} \times A_2^{n}$, then $\cloA^{(n)} = \cloA_1^{(n)} \times \cloA_2^{(n)}$, acting on $A^n$ in the natural way. Direct products are compatible with uniform generation in the following sense:

\begin{proposition} \label{proposition:ugproduct}
Let $\cloA_1$, $\cloA_2$, $\cloB$ be clones (on sets $A_1,A_2,B$), and let us assume that $\clodO_{A_i,B}$ is uniformly generated by $n$-ary ($\cloA_i$,$\cloB$)-minors for $i=1,2$. 
Then $\clodO_{A_1\times A_2,B}$ is uniformly generated by $n$-ary ($\cloA_1\times \cloA_2$,$\cloB$)-minors.
\end{proposition}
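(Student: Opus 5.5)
Fix an arity $k$ and view $f\colon (A_1\times A_2)^k\to B$ as a function $f(\vx,\vy)$ of two blocks of variables, with $\vx\in A_1^k$ and $\vy\in A_2^k$. The idea is to apply the uniform generation for $A_1$ in the first block and then the one for $A_2$ in the second block. By hypothesis there are $s\in\cloB^{(m)}$ and $r_1,\dots,r_m\in R_n^{k,k}(\cloA_1)$ with $h=s\circ(h\circ r_1,\dots,h\circ r_m)$ for every $h\colon A_1^k\to B$. Likewise there are $t\in\cloB^{(p)}$ and $q_1,\dots,q_p\in R_n^{k,k}(\cloA_2)$ with the analogous property on $A_2^k$.

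In the first step, freeze $\vy$ and apply the $A_1$-representation to the function $\vx\mapsto f(\vx,\vy)$; this is legitimate because the hypothesis covers \emph{all} functions $A_1^k\to B$. We get
\[ f(\vx,\vy)=s\bigl(f(r_1(\vx),\vy),\dots,f(r_m(\vx),\vy)\bigr). \]
Then, for each $i$, freeze $\vx$ and apply the $A_2$-representation to $\vy\mapsto f(r_i(\vx),\vy)$. Combining the two,
\[ f(\vx,\vy)=s\Bigl(t\bigl(f(r_1(\vx),q_1(\vy)),\dots\bigr),\dots,t\bigl(\dots,f(r_m(\vx),q_p(\vy))\bigr)\Bigr). \]
The outer operation is a composition of $s$ and $t$, so it lies in $\cloB$. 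The inner maps are $\rho_{ij}=(r_i,q_j)$, acting coordinatewise on $(A_1\times A_2)^k\cong A_1^k\times A_2^k$.

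The crux is to show that $\rho_{ij}\in R_n^{k,k}(\cloA_1\times\cloA_2)$. Write $r_i=v\circ w$ with $w\in(\cloA_1^{(k)})^n$ and $v\in(\cloA_1^{(n)})^k$, and similarly $q_j=v'\circ w'$, using the same $n$. The componentwise pairs $(w_\ell,w'_\ell)$ are $k$-ary operations of the product clone, since $\cloA^{(k)}=\cloA_1^{(k)}\times\cloA_2^{(k)}$. In the same way $(v_\ell,v'_\ell)$ are $n$-ary operations of the product clone, and $(v,v')\circ(w,w')=(r_i,q_j)$ holds coordinatewise.

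This works only because both factorizations go through the \emph{same} middle arity $n$; a rank-$n$ map on each factor gives a rank-$n$ map on the product, not rank $2n$. That is the one point needing care, and it is exactly what the product clone's definition supplies. As a sanity check, the representation is uniform: $s$, $t$, $r_i$ and $q_j$ depend only on $k$, not on $f$. Since $k$ was arbitrary, this proves the proposition.
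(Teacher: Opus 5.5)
Your proof is correct and follows the paper's argument. You freeze one block of variables, apply the uniform representation on the other, nest the two formulas, and note that $(r_i,q_j)$ factors through the product clone with the same middle arity $n$. The differences are minor: you handle every arity $k$ directly, whereas the paper fixes $k=n+1$ and implicitly relies on Theorem~\ref{theorem:unigen} to reach all arities. You also write out the factorization $(v,v')\circ(w,w')$, which the paper only asserts in one line.
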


\begin{proof}
Let $k = n+1$, and let us identify every tuple $\mathbf x \in (A_1\times A_2)^k$ with $(\mathbf x_1,\mathbf x_2) \in A_1^k \times A_2^k$, where $\mathbf x_1$ (respectively $\mathbf x_2$) denotes the coordinate-wise projection to $A^k_1$ (respectively $A^k_2$). Using this notation, we can also consider every $k$-ary operation $f \colon A^k \to B$ as a function $f \colon A_1^k \times A_2^k \to B$.

Since $\clodO_{A_1,B}^{(k)}$ and $\clodO_{A_2,B}^{(k)}$ are uniformly generated by $n$-ary minors, there are $r_1,\ldots r_m \in R_n^{k,k}(\cloA_1)$ and $s_1,\ldots s_l \in R_n^{k,k}(\cloA_2)$ and $g_1,g_2 \in \cloB$ such that for all $f \colon A^k \to B$ and all $(\mathbf{x}_1,\mathbf{x}_2) \in A_1^k \times A_2^k$,
\begin{align*}
f(\mathbf x_1,\mathbf x_2) = &g_1(f(r_1(\mathbf x_1),\mathbf x_2),\dots, f(r_m(\mathbf x_1),\mathbf x_2),\\
f(\mathbf x_1,\mathbf x_2) = &g_2(f(\mathbf x_1,s_1(\mathbf x_2)),\dots, f(\mathbf x_1,s_l(\mathbf x_2)).
\end{align*}
Combining these two equations, we obtain
\begin{equation} \label{eq:ugproduct}
\begin{split}
f(\mathbf x_1,\mathbf x_2) = g_1( &g_2(f(r_1(\mathbf x_1),s_1(\mathbf x_2)),\ldots,f(r_1(\mathbf x_1),s_l(\mathbf x_2))) ,\\
 &g_2(f(r_2(\mathbf x_1),s_1(\mathbf x_2)),\ldots,f(r_2(\mathbf x_1),
s_l(\mathbf x_2))),\ldots,\\
&g_2(f(r_m(\mathbf x_1),s_1(\mathbf x_2)),\ldots, f(r_m(\mathbf x_1),s_l(\mathbf x_2)))),
\end{split}
\end{equation}


for all $f \colon A^k \to B$ and $(\mathbf{x}_1,\mathbf{x}_2) \in A_1^k \times A_2^k$. Since $\cloA^{(k)} 
= \cloA_1^{(k)} \times \cloA_2^{(k)}$, for every pair $r_i \in R_n^{k,k}(\cloA_1)$, $s_j \in R_n^{k,k}(\cloA_2)$ there is a $t_{ij} \in R_n^{k,k}(\cloA)$ with $t_{i,j}(\mathbf x_1,\mathbf x_2) = (r_i(\mathbf x_1),s_j(\mathbf x_2))$. Thus \eqref{eq:ugproduct} witnesses that $\clodO_{A,B}$ is $(n,\cloA,\cloB)$-UG, which finishes the proof.
\end{proof}

We further would like to remark that, if $\cloA,\cloB$ are clones on finite sets, and $\clodO_{A,B}$ is $n$-UG, then every $(\cloA,\cloB)$-clonoid can be described as the polymorphism minion of relational structures of maximal arity $\leq |A|^n$. In fact, we do not require \emph{uniform} generation for this. A proof can be derived from the proof of Theorem \ref{theorem:CF} in~\cite{CF-clonoids}. We include it for completeness:

\begin{lemma} \label{lemma:fingen}
Let $\cloA, \cloB$ be clones on finite sets $A$ and $B$ and $n \in \N$. Then, let us define the relational structures $\mathbb A = (A,\cloA^{(n)})$, and, for every clonoid $\cloC$,  $\mathbb B_{\cloC} = (B,\cloC^{(n)})$ (here, we regard $\cloA^{(n)} \subseteq A^{A^n}$, and $\cloC^{(n)} \subseteq B^{A^n}$ as $A^n$-ary relation on $A$, respectively $B$). Then the following are equivalent:
\begin{enumerate}
\item $\forall (\cloA,\cloB)$-clonoid $\clodC$: $\clodC = \langle \clodC^{(n)} \rangle_{\cloA,\cloB}$,
\item $\forall (\cloA,\cloB)$-clonoid $\clodC$: $\clodC = \Pol(\mathbb A,\mathbb B_{\cloC})$.
\end{enumerate}
\end{lemma}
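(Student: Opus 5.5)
We prove the two implications separately, using Theorem \ref{theorem:CF} as the link between clonoids and polymorphism minions. The key observation is that, for any $(\cloA,\cloB)$-clonoid $\clodC$, the set $\Pol(\mathbb A,\mathbb B_{\clodC})$ is itself an $(\cloA,\cloB)$-clonoid. Indeed, $\cloA\subseteq\Pol(\mathbb A)$, since an $m$-ary $t\in\cloA$ applied coordinatewise to $g_1,\ldots,g_m\in\cloA^{(n)}$ gives $t\circ(g_1,\ldots,g_m)\in\cloA^{(n)}$. Likewise $\cloB\subseteq\Pol(\mathbb B_{\clodC})$, because $\clodC^{(n)}$ is closed under left composition with $\cloB$. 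Theorem \ref{theorem:CF} (or a direct check) then gives the claim. Moreover, its $n$-ary part is exactly $\clodC^{(n)}$.

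For this last fact, let $f\colon A^n\to B$ be a polymorphism. Applying $f$ to the $n$ projections $\pi^n_1,\ldots,\pi^n_n\in\cloA^{(n)}$, regarded as $A^n$-tuples, yields the tuple $\vx\mapsto f(\pi_1^n(\vx),\ldots,\pi_n^n(\vx))=f(\vx)$. Since this tuple must lie in $\clodC^{(n)}$, we get $f\in\clodC^{(n)}$. Conversely, for $f\in\clodC^{(n)}$ and $g_1,\ldots,g_n\in\cloA^{(n)}$, the coordinatewise application is $f\circ(g_1,\ldots,g_n)$. This lies in $\clodC^{(n)}$ by right-closure of $\clodC$ under $\cloA$.

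The same computation for arbitrary arity $k$ shows that $\clodC\subseteq\Pol(\mathbb A,\mathbb B_{\clodC})$ always holds. For $f\in\clodC^{(k)}$ and $g_1,\ldots,g_k\in\cloA^{(n)}$, the image tuple is $f\circ(g_1,\ldots,g_k)\in\clodC^{(n)}$.

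For (1)$\Rightarrow$(2), let $\clodD=\Pol(\mathbb A,\mathbb B_{\clodC})$. By the observations above, $\clodD$ is a clonoid with $\clodD\supseteq\clodC$ and $\clodD^{(n)}=\clodC^{(n)}$. Applying (1) to both $\clodC$ and $\clodD$ gives
\[
\clodD=\langle\clodD^{(n)}\rangle_{\cloA,\cloB}=\langle\clodC^{(n)}\rangle_{\cloA,\cloB}=\clodC .
\]

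For (2)$\Rightarrow$(1), fix $\clodC$ and set $\clodE=\langle\clodC^{(n)}\rangle_{\cloA,\cloB}\subseteq\clodC$. Then $\clodE^{(n)}=\clodC^{(n)}$: the inclusion $\supseteq$ is clear, and $\subseteq$ follows from $\clodE\subseteq\clodC$. Consequently $\mathbb B_{\clodE}=\mathbb B_{\clodC}$. Applying (2) to both clonoids gives
\[
\clodC=\Pol(\mathbb A,\mathbb B_{\clodC})=\Pol(\mathbb A,\mathbb B_{\clodE})=\clodE .
\]

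The only step requiring care is identifying the $n$-ary part of the polymorphism minion, specifically evaluating at the projection tuples. Everything else is formal. Finiteness of $A$ and $B$ is needed only so that the relations $\cloA^{(n)}$ and $\clodC^{(n)}$ have finite arity $|A|^n$, which makes them legitimate relations in the sense of the paper's definition of relational structures.
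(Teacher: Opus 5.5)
Your proof is correct and follows essentially the same route as the paper. The paper likewise shows that $\clodC \subseteq \Pol(\mathbb A,\mathbb B_{\clodC})$, that the latter is an $(\cloA,\cloB)$-clonoid via Theorem \ref{theorem:CF}, and that it has the same $n$-ary part by evaluating at the projections; for (1)$\Rightarrow$(2) it uses that clonoids are then determined by their $n$-ary parts (which you make explicit by applying (1) to both clonoids), and for (2)$\Rightarrow$(1) it uses $\mathbb B_{\clodC} = \mathbb B_{\langle \clodC^{(n)}\rangle_{\cloA,\cloB}}$, exactly as you do.
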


\begin{proof}
First, observe that for every $(\cloA,\cloB)$-clonoid $\clodC$ the inclusions $\langle \clodC^{(n)} \rangle \subseteq \clodC \subseteq \Pol(\mathbb A,\mathbb B_{\cloC})$ hold. Furthermore, $\mathbb A$ is clearly invariant under $\cloA$; since $\cloC$ is stable under composition with $\cloB$, also $\mathbb B_{\cloC}$ is invariant under $\cloB$. Thus by Theorem \ref{theorem:CF}, $\Pol(\mathbb A,\mathbb B_{\cloC})$ is a $(\cloA, \cloB)$-clonoid.

We next show that $\clodC^{(n)} = \Pol(\mathbb A,\mathbb B_{\cloC})^{(n)}$. By the above, we only need to show the inclusion $\supseteq$. So let $f \in \Pol(\mathbb A,\mathbb B_{\cloC})^{(n)}$. Note that $\pi_1^n,\ldots,\pi_n^n \in \cloA^{(n)}$, where $\pi_i^n$ denotes the $n$-ary projection to the $i$-th coordinate on $A$. Since $f \in \Pol(\mathbb A,\mathbb B_{\cloC})^{(n)}$, we get $ f(\pi_1^n,\ldots,\pi_n^n) \in \clodC^{(n)}$. In other words $f \in \clodC^{(n)}$, which 
which is what we wanted to prove.

In order to prove the statement of the lemma, let us first assume that (1) holds. This implies in particular that every $(\cloA,\cloB)$-clonoid is completely determined by its $n$-ary part. Hence $\cloC^{(n)} = \Pol(\mathbb A,\mathbb B_{\cloC})^{(n)}$ implies that $\cloC = \Pol(\mathbb A,\mathbb B_{\cloC})$ for every $(\cloA,\cloB)$-clonoid $\clodC$. Thus (2) holds.

Conversely assume that (2) holds. By its definition $\mathbb B_{\cloC} = \mathbb B_{\langle \cloC^{(n)} \rangle_{\cloA,\cloB}}$ holds, for every $(\cloA,\cloB)$-clonoid $\cloC$. Now (2) implies that $\clodC = \Pol(\mathbb A,\mathbb B_{\cloC}) = \Pol(\mathbb A,\mathbb B_{\langle \cloC^{(n)} \rangle_{\cloA,\cloB}}) = \langle \cloC^{(n)} \rangle_{\cloA,\cloB}$, for every $(\cloA,\cloB)$-clonoid $\cloC$. Hence (1) holds.
\end{proof}

As a straightforward corollary of Lemma \ref{lemma:fingen} and Corollary~\ref{corollary:unigen}, we get:

\begin{corollary} \label{corollary:fingen}
Let $\cloA, \cloB$ be clones on finite sets $A$ and $B$, and assume $\clodO_{A,B}^{(n+1)}$ is $(n,\cloA, \cloB)$-UG. Then, every $(\cloA, \cloB)$-clonoid $\clodC$ is the polymorphism minion of relational structures of maximal arity $\leq |A|^n$.
\end{corollary}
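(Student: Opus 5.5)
The statement follows by chaining Corollary~\ref{corollary:unigen} with Lemma~\ref{lemma:fingen}; the only point needing care is the arity of the relations produced.

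First, since $\clodO_{A,B}^{(n+1)}$ is $(n,\cloA,\cloB)$-UG, Corollary~\ref{corollary:unigen} gives $\clodC = \langle \clodC^{(n)} \rangle_{\cloA,\cloB}$ for every $(\cloA,\cloB)$-clonoid $\clodC$. Strictly, Corollary~\ref{corollary:unigen} is derived from Theorem~\ref{theorem:unigen}, which is formulated for clonoids from $\cloA_{const}$. The full clonoid $\clodO_{A,B}$ is trivially closed under composition with constants, though. So we apply Corollary~\ref{corollary:unigen1} with $\clodC = \clodO_{A,B}$ (which is a $(\cloA_{const},\cloB)$-clonoid), and conclude that every subclonoid $\clodD \subseteq \clodO_{A,B}$ is generated by $\clodD^{(n)}$. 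Every $(\cloA,\cloB)$-clonoid is such a subclonoid, so condition (1) of Lemma~\ref{lemma:fingen} holds.

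Second, Lemma~\ref{lemma:fingen} then gives condition (2): every $(\cloA,\cloB)$-clonoid satisfies $\clodC = \Pol(\mathbb A, \mathbb B_{\clodC})$, with $\mathbb A = (A,\cloA^{(n)})$ and $\mathbb B_{\clodC} = (B,\clodC^{(n)})$.

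Third, we check the arity. Each structure has a single relation, namely $\cloA^{(n)} \subseteq A^{A^n}$, respectively $\clodC^{(n)} \subseteq B^{A^n}$. Each is indexed by $A^n$ and so has arity $|A^n| = |A|^n$. This is finite because $A$ is finite. Hence $\clodC$ is the polymorphism minion of relational structures of maximal arity at most $|A|^n$.

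There is no real obstacle here. The one subtlety is the constants issue in the first step, which disappears once Corollary~\ref{corollary:unigen1} is applied to the full clonoid $\clodO_{A,B}$.
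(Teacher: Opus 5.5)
Your proposal is correct and takes the same route as the paper, which obtains this corollary directly by combining Corollary~\ref{corollary:unigen} with Lemma~\ref{lemma:fingen}. Your unpacking of Corollary~\ref{corollary:unigen} as Corollary~\ref{corollary:unigen1} applied to $\clodO_{A,B}$ is exactly how the paper derives it, and the arity count $|A^n| = |A|^n$ for the relations $\cloA^{(n)}\subseteq A^{A^n}$ and $\clodC^{(n)}\subseteq B^{A^n}$ is right.
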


\subsection{Uniform generation for clonoids between affine clones}

In this section, we are going to review some results from the literature on clonoids between affine clones under the light of uniform generation. 


First, we would like to point out that all of the known classifications of clonoids between coprime modules are (implicitly) based on the proof that $\clodO_{A,B}$ is uniformly generated by $n$-ary minors. 

Arguably the first such result was proven in ~\cite{fioravanti-clonoids}, where the first author classified all the clonoids between 1-dimensional vector spaces over coprime, finite fields. Although not explicitly stated in~\cite{fioravanti-clonoids}, one can easily derive the following statement from it:

\begin{lemma} \label{lemma:onedim}
Let $\algA$ be a one-dimensional vector space over a finite field $\F$, and let $\algB$ be a $\algS$-module, such that $\chars(\F)$ is invertible in $\algS$. Then $\clodO_{A,B}$ is uniformly generated by unary $(\algA,\algB)$-minors.
\end{lemma}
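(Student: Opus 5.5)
By Theorem \ref{theorem:unigen}, applied to $\clodC = \clodO_{A,B}$ (which is trivially a clonoid from $\cloA_{const}$ to $\Clo(\algB)$), it suffices to show that $\clodO_{A,B}^{(2)}$ is uniformly generated by unary $(\algA,\algB)$-minors. So the task is to find a single formula
\[ f(x,y) = \sum_{i} s_i\, f(a_i x + b_i y,\, c_i x + d_i y), \qquad s_i \in S, \]
valid for all $f\colon F^2\to B$, in which each matrix $\begin{bmatrix} a_i & b_i\\ c_i & d_i\end{bmatrix}$ has rank at most $1$. By Lemma \ref{lemma:urobs}(3) we may build this formula by combining several uniformly representable pieces with $\algB$-linear combinations.

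Let $q=|F|$ and $p=\chars(\F)$, so $q$ is invertible in $\algS$. The key identity I would use is the ``line decomposition'' of the plane. The one-dimensional subspaces $L_1,\dots,L_{q+1}$ of $F^2$ cover each nonzero point exactly once and $0$ exactly $q+1$ times. For a fixed point $(x,y)$ and each line $L=\langle v\rangle$, choose a linear projection $\pi_L\colon F^2\to L$, which is a rank-one matrix, and consider the affine line $(x,y)+L$. Summing $f$ over the fibers should express $f(x,y)$. Concretely, I would aim at an identity of the form
\[ \sum_{L}\ \sum_{t\in F} f\big((x,y) + t v_L\big) = q\, f(x,y) + \sum_{z\in F^2} f(z), \]
since every point $z\neq (x,y)$ lies on exactly one line through $(x,y)$. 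The left-hand side, however, involves maps of rank $2$, so this naive version has to be adapted: one must use only rank-one maps $X\mapsto M X$, that is, $f$ evaluated at points of a single line through the origin depending linearly on $(x,y)$. Instead, I would use the fact that for each line $L$ through $0$ with complement direction, the map $(x,y)\mapsto \lambda(x,y)\, w$, with $\lambda$ a linear functional and $w\in F^2$, is rank one. Summing over all pairs $(\lambda,w)$ with suitable coefficients gives a Fourier-type inversion. The cleaner route, which I would actually take, is to use the one-dimensional result from \cite{fioravanti-clonoids} directly: a function $g\colon F\to B$ satisfies uniform identities generated by the maps $x\mapsto ax$. Applying this fiberwise (fix $y$, vary $x$), as in the proof of Theorem \ref{theorem:unigen}, reduces $f(x,y)$ to combinations of $f(ax,y)$ and constants. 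The remaining problem is to eliminate the dependence in which both coordinates vary independently.

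For that step I would use the averaging identity
\[ \sum_{\lambda\in F} f(x+\lambda y', \ldots) \]
and its variants. The idea is to write $f(x,y)$ as a $\algB$-combination of $f(ax+by,\, c(ax+by))$, i.e.\ of values $f(u\,w)$ with $u=\lambda(x,y)$ and $w$ a fixed vector. This is equivalent to solving a linear system over $\Z[1/q]\subseteq S$. Since $\Z[q^{-1}]$-valued functions on $F^2$ are spanned by ``ridge functions'' $h(\lambda(\cdot))$ whenever $q$ is invertible (a Radon-transform inversion: summing over the $q+1$ directions recovers $q\,f$ plus a global term), the coefficients exist. The global term $\sum_z f(z)$ is itself $\frac1q\sum_{\lambda\neq 0}\sum_t f(t w_\lambda)$-type, and that sum is a combination of rank-$\le 1$ minors. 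The main obstacle is precisely this Radon inversion: checking that the total-sum correction term and the point $0$ counted $q+1$ times are expressible with rank-one maps and that the only scalar to be divided by is a power of $q$ (hence of $p$), which is invertible in $\algS$.
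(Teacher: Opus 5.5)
Your reduction to arity $2$ via Theorem~\ref{theorem:unigen} is correct. However, the proposal never actually produces the formula $f(P)=\sum_{\rk(M)\le 1}c_M f(MP)$, and the reasons you give for its existence do not hold up.

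The fiberwise step gains nothing. For unary $g$ the uniform identity is just $g(x)=g(x)$, and $(x,y)\mapsto(ax,y)$ has rank $2$ for $a\neq 0$.

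The appeal to ``ridge functions span $S^{F^2}$'' is a non sequitur. It only says that each \emph{individual} $f$ is a combination of functions $h\circ\lambda$ with $f$-dependent profiles $h$. In a unary minor $P\mapsto f(\lambda(P)w)$, by contrast, the profile is forced to be $t\mapsto f(tw)$, and the coefficients must not depend on $f$. The actual requirement is the linear system $[P=Z]=\sum_{M:\,MP=Z}c_M$ for all $P,Z\in F^2$, and you never solve it.

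Finally, the pieces of the classical inversion $qf(P)=\sum_{\ell\ni P}\sum_{z\in\ell}f(z)-\sum_{z}f(z)$ cannot be adapted one by one. Any combination $\sum_M c_Mf(MP)$ evaluates at $P=0$ to $(\sum_M c_M)f(0)$. Each line sum through $0$, and the global sum $\sum_z f(z)$, depend on values $f(z)$ with $z\neq 0$. In particular, your claim that $\sum_z f(z)$ is a combination of rank-$\le 1$ minors is false: $\sum_t f(tw)$ uses the constant maps $P\mapsto tw$, which lie in $\cloA_{const}$ but not in $\Clo(\algA)$.

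The missing idea is that for $Z\neq 0$ and $\rk(M)=1$, the set $\{P\mid MP=Z\}$ is an affine line \emph{avoiding the origin} (or empty). So the inversion must use only such lines. This is what Theorem~\ref{thm:reprdelta} does for $k=1$:
\[ \delta_{X_0}=q^{-1}\bigl(\sum_V\delta_V-\sum_{V'}\delta_{V'}\bigr), \]
with $V$ running over the lines through $X_0$ avoiding $0$, and $V'$ over the lines parallel to $\langle X_0\rangle$ avoiding $0$. These terms are the minors $\delta_{X_0}(X_0M_VX)$, where $X_0M_V$ is a rank-one idempotent, respectively nilpotent, matrix. Lemmas~\ref{lemma:thetainterpolation} and~\ref{lemma:reductiontosingleton} then spread this over all nonzero points and add the $f(0)$-correction. (The paper's own proof of this lemma simply defers to the proof of \cite[Theorem 4.3]{fioravanti-clonoids}.)

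Put together, this yields the explicit identity
\[ q\,f(P)=\sum_{E}f(EP)-\sum_{N}f(NP)-f(0), \]
with $E$ over the $q(q+1)$ rank-one idempotent and $N$ over the $q^2-1$ rank-one nilpotent $2\times 2$ matrices. It can be checked on $f=\delta_Z$ by a short count. This is the step your plan leaves open.
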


\begin{proof}
A proof of this lemma can be extracted from the proof of \cite[Theorem 4.3.]{fioravanti-clonoids}. Although \cite[Theorem 4.3.]{fioravanti-clonoids} is only stated for the case in which $\algB$ is also a one-dimensional vector space, it can easily be seen that the only required assumption on the module $\algB$ is that the $|F|$, or equivalently $\chars(\F)$, is invertible in $\algS$.
\end{proof}

Lemma \ref{lemma:onedim} together with Proposition \ref{proposition:ugproduct} implies that the statement of Lemma \ref{lemma:onedim} further holds, if $\algA$ is a direct product of finite fields, considered as a regular module. This was in part already observed in \cite{fioravanti-clonoidsproducts}. Using this observation, we can obtain the following slight generalization of \cite[Theorem 1.5.]{MW-clonoidsmodules}:

\begin{theorem} \label{theorem:unarymodules}
Let $\algA$ be a finite, faithful $\algR$-module over a commutative ring $\algR$, and let $\algB$ be a faithful $\algS$-module. Then the following are equivalent:
\begin{enumerate}
    \item $\clodO_{A,B}$ is uniformly generated by unary $(\algA,\algB)$-minors.
    \item Every $(\algA,\algB)$-clonoid is generated by unary functions.
    \item $\chars(R)$ is invertible in $\algS$, $\algR$ is a direct product of finite fields and $\algA$ is isomorphic to the regular $R$-module.
\end{enumerate}
\end{theorem}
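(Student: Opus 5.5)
The plan is to prove the cycle (3) $\Rightarrow$ (1) $\Rightarrow$ (2) $\Rightarrow$ (3). The first two implications are cheap, and essentially all of the work sits in (2) $\Rightarrow$ (3), where I will lean on \cite[Theorem 1.5]{MW-clonoidsmodules}.

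For (3) $\Rightarrow$ (1), I will write $\algR \cong \F_1 \times \cdots \times \F_r$ as a product of finite fields. Since $\algA$ is the regular module, $\Clo(\algA)$ is term-equivalent to the product clone $\Clo(\F_1)\times\cdots\times\Clo(\F_r)$, with each $\F_i$ viewed as a one-dimensional vector space over itself. The characteristic of each $\F_i$ divides $\chars(R)$, so it is invertible in $\algS$. Lemma \ref{lemma:onedim} then gives that each $\clodO_{F_i,B}$ is uniformly generated by unary minors, and an induction on $r$ using Proposition \ref{proposition:ugproduct} gives that $\clodO_{A,B}$ is $(1,\algA,\algB)$-UG. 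One point to check is that the $\algR$-module clone on $A$ really coincides with the product clone, i.e.\ $\Clo(\algA)^{(n)} = \prod_i \Clo(\F_i)^{(n)}$. This holds because $R=\prod F_i$ acts coordinatewise and the idempotents let us choose the coefficients in each factor independently.

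For (1) $\Rightarrow$ (2), I will apply Lemma \ref{lemma:ngen} directly. Uniform generation of each $\clodO^{(k)}_{A,B}$ yields, for every $f$, a formula \eqref{eq:ngen} with $n=1$, so $\langle f\rangle = \langle \langle f\rangle^{(1)}\rangle$, and this passes to arbitrary clonoids. The argument is the same as in Corollary \ref{corollary:unigen1}, taking $n=1$ and $\clodC=\clodO_{A,B}$. Note that Corollary \ref{corollary:unigen} needs only arity $2$, but here we are given uniform generation at every arity anyway.

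For (2) $\Rightarrow$ (3), the plan is to cite \cite[Theorem 1.5]{MW-clonoidsmodules}. As I recall it, that theorem shows that for finite faithful modules over commutative rings, all clonoids being generated by unary functions forces coprimality and $\algA$ regular over a product of fields. I expect the main obstacle to be checking that the cited hypotheses match ours, namely finiteness of $\algB$ and which ring acts. If they do not match exactly, I will argue directly in three steps.

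First, if $\chars(R)$ is not invertible in $\algS$, a prime $p$ divides both $|A|$ and the additive order of some element of $B$. In that case I will use the infinite chain of \cite[Theorem 1.3]{MW-clonoidsmodules}, or build explicitly a $p$-linear binary function, e.g.\ $x_1x_2$ on $\Z_p$-coordinates into an element of order $p$, that is not generated by its unary minors.

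Second, if $\algR$ has a nonzero nilpotent or, more generally, is not semisimple, or if $\algA$ is not cyclic free of rank one, I will exhibit a binary function $f$ whose clonoid is not unary-generated. For this I will use the relational criterion of Lemma \ref{lemma:fingen} with $n=1$: it suffices to find a binary $f$ that preserves all unary relations $\Pol(\mathbb A,\mathbb B_{\clodC})$ coming from $\clodC^{(1)}$ but lies outside $\clodC$. A natural candidate is an indicator function $f(x,y)=b\cdot[x\in U, y\in U']$ for submodules $U,U'$ chosen so that no rank-one (unary) factorization can separate them.

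Third, I will show that the existence of such separating submodules is exactly what fails when $\algA$ is regular over a product of fields. This last construction is the step I expect to be hardest, and I would defer to \cite{MW-clonoidsmodules} there if possible.
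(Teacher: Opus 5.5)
Your arguments for (3)$\Rightarrow$(1) and (1)$\Rightarrow$(2) are exactly the paper's. For (3)$\Rightarrow$(1) you split $\Clo(\algA)$ as a product of clones of one-dimensional vector spaces and apply Lemma~\ref{lemma:onedim} and Proposition~\ref{proposition:ugproduct}; for (1)$\Rightarrow$(2) you use Lemma~\ref{lemma:ngen}, i.e.\ Corollary~\ref{corollary:unigen}. For (2)$\Rightarrow$(3) the paper also only cites \cite[Theorem 1.5]{MW-clonoidsmodules}, which is proved for finite $\algB$, and asserts that the same proof works for arbitrary $\algB$.

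You rightly identify finiteness of $\algB$ as the issue, but your fallback fails exactly there. For infinite $\algB$, a prime $p\mid\chars(R)$ that is not a unit in $\algS$ need not give any element of order $p$ in $B$. For finite $\algB$ it does: $\algS$ embeds into $\End(B)$, so it is finite, the central non-unit $p$ is a zero divisor with $ps=0$ for some $s\neq 0$, and $sb$ has order $p$ for suitable $b$. But in that case the citation alone already suffices. Incidentally, your sample witness $x_1x_2$ only works for $p=2$. For $\algA=\Z_p$ with $p$ odd, $xy\cdot b=\tfrac12\bigl((x+y)^2-x^2-y^2\bigr)\cdot b$ lies in the clonoid generated by its unary minors.

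In fact the gap cannot be closed, because (2)$\Rightarrow$(3) is false for infinite $\algB$. Let $\algA=\algR=\Z_2$, $\algS=\Z$ and $\algB=\bigoplus_{q\text{ odd prime}}\Z_q$, which is a faithful $\Z$-module.
\begin{itemize}
\item Any $f\colon \Z_2^k\to B$ takes values in a finite submodule $B'\cong\Z_N$ with $N$ odd. On $B'$, $\Z$ acts through $\Z_N$, where $2$ is a unit.
\item So Lemma~\ref{lemma:onedim} writes $f$ as an integer combination of its unary minors. Directly: $f(\vx)=f(\mathbf 0)-2^{1-k}\sum_{\va\neq\mathbf 0}\sum_{\vu}(-1)^{\vu^T\va}f(\va\vu^T\vx)$, with $2^{-1}$ read modulo $N$.
\item Hence $\clodC=\langle\clodC^{(1)}\rangle_{\algA,\algB}$ for every clonoid $\clodC$, so (2) holds.
\item But $2$ is not a unit of $\Z$, so (3) fails.
\item (1) fails as well. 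By faithfulness, a formula that works uniformly for all $\delta_{(0,0)}^b$ would give an integer identity $a_1\delta_0(x_1)+a_2\delta_0(x_2)+a_3\delta_0(x_1+x_2)+d=\delta_{(0,0)}(x_1,x_2)$. Evaluating at the four points forces $-2d=1$.
\end{itemize}

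So the stated equivalence, and the paper's claim that the proof of \cite{MW-clonoidsmodules} carries over verbatim, need $\algB$ to be finite. The implications (3)$\Rightarrow$(1)$\Rightarrow$(2) hold in general. With $\algB$ finite, your citation completes the proof, just as in the paper.
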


\begin{proof}
The implication (1)$\to$(2) holds by Corollary \ref{corollary:unigen}. The implication (3)$\to$(1) follows from Lemma \ref{lemma:onedim} and Proposition \ref{proposition:ugproduct}. The implication (2)$\to$(3) was shown in \cite[Theorem 1.5.]{MW-clonoidsmodules} for the case that $\algB$ is finite. However, it is not hard to see that the exact same proof also works for general $\algB$.
\end{proof}

More generally, in \cite{MW-clonoidsmodules} the following was shown:

\begin{theorem} \label{theorem:distributive}
Let $\algA$ be a finite distributive $\algR$-module (i.e. its lattice of submodules is distributive), and let $\algB$ be an $\algS$-module, such that $\chars(R)$ is invertible in $\algS$. Then $\cloO_{A,B}$ is uniformly generated by  $n$-ary $(\algA,\algB)$-minors, where $n$ is the nilpotence degree of the Jacobson radical of $\algR$.
\end{theorem}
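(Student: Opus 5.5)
By Theorem~\ref{theorem:unigen}, applied to $\clodC=\clodO_{A,B}$ (which is trivially a clonoid from $\cloA_{const}$), it suffices to show that $\clodO_{A,B}^{(n+1)}$ is uniformly generated by $n$-ary $(\algA,\algB)$-minors. In other words, we need one fixed term $s$ of $\algB$ and fixed $(n+1)\times(n+1)$ matrices $M_1,\dots,M_m$ over $R$ of rank $\le n$ such that $f(\vx)=s(f(M_1\vx),\dots,f(M_m\vx))$ holds for all $f\colon A^{n+1}\to B$ and all $\vx$.

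I would first do the standard reductions. Replace $\algR$ by $R/\mathrm{Ann}(A)$, so that $\algR$ is a finite ring acting faithfully; this changes neither the clone nor the invertibility hypothesis. Then split $\algR$ into a product of local rings, with the corresponding decomposition $\algA=\prod \algA_i$, where each $\algA_i$ is a distributive module over the local ring $\algR_i$. Proposition~\ref{proposition:ugproduct} reduces the problem to a single factor. The nilpotence degree of the radical of $\algR$ is the maximum over the factors, so Lemma~\ref{lemma:urobs}(2) lets us pad each factor up to the common $n$. For a distributive module over a local ring I would use the classical fact (Stephenson/Camillo) that it is uniserial. Hence its submodules form a chain $0=J^nA\le\cdots\le JA\le A$ of length at most $n$, refined as necessary.

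The core step uses averaging operators, which are uniformly representable precisely because $|R|$ is invertible in $\algS$. For an additive subgroup $G\le R^{(n+1)\times(n+1)}$ of matrices of rank $\le n$ with $|G|$ a $\chars(R)$-power, the operator $P_G f(\vx)=|G|^{-1}\sum_{M\in G}f(\vx+M\vx)$, or a variant using affine maps $\vx\mapsto(\mathrm{Id}-E)\vx+M\vx$ with $E$ idempotent of rank $\le n$, is $(n,\algA,\algB)$-UR by Lemma~\ref{lemma:urobs}(3). Here the $\mathrm{Id}$-part needs care: each minor used must itself have rank $\le n$, so I would rather average over cosets $E+G$ consisting entirely of rank $\le n$ matrices. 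For a fixed $\vx$, such an average sees only the orbit of $\vx$, which is controlled by the relation module $\{\vr\in R^{n+1}:\vr^T\vx=0\}$ and by the chain of submodules containing the entries of $\vx$.

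The plan is to run a Möbius/inclusion–exclusion argument over this finite lattice of "types" of tuples $\vx$. One writes $\id$ as an integer combination of composites of averaging operators so that, for every type, all terms except $f(\vx)$ cancel. This is analogous to the proof of Lemma~\ref{lemma:onedim}, which uses the one-dimensional averaging $f(x)=\sum_\lambda f(\lambda x)$ with coefficients, but now with $n$ levels of the radical filtration.

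The main obstacle is the key combinatorial fact that such an expansion exists using only rank-$\le n$ matrices. It must hold uniformly in $\vx$, including tuples whose entries reach all the way to the top layer $A/JA$. Concretely, one needs every $(n+1)$-tuple in a uniserial module of Loewy length $\le n$ to be fixed by enough rank-$\le n$ matrices. Entries from $n+1$ elements in a chain of length $n$ must have a "collision" in the filtration, and this gives a nontrivial relation $\vr^T\vx=0$ that can be exploited by an appropriate idempotent. I would try an induction on $n$ first: pass to $A/J^{n-1}A$ (length $n-1$), apply the inductive formula there, and correct the error term, which lives in $J^{n-1}A$, a semisimple layer, using the unary result of Lemma~\ref{lemma:onedim}. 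If the correction step fails to stay within rank $n$, I would fall back on directly constructing, for each $\vx$, the idempotents from the collision and averaging over all of them.
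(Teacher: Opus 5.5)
\textbf{There is a genuine gap at the essential step.} Your preliminary reductions match the remark in the paper's proof: arity $n+1$ via Theorem~\ref{theorem:unigen}, a faithful action, and via Proposition~\ref{proposition:ugproduct} a single uniserial factor. (Splitting $\algR$ into local rings needs $\algR$ commutative, since a finite ring need not be a product of local rings.) However, the paper does not prove the uniserial case. Its entire proof is a citation of \cite[Theorem 1.4]{MW-clonoidsmodules}, together with that reduction. Your proposal neither cites this result nor replaces it. You need one formula $f(\vx)=s(f(M_1\vx),\dots,f(M_m\vx))$ with $\rk(M_i)\le n$, valid simultaneously for all $f\colon A^{n+1}\to B$ and all $\vx$. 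That is the whole content of the theorem, and you leave it as ``the main obstacle''.

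The mechanisms you sketch do not close this gap.
\begin{itemize}
\item \emph{Averaging.} The minors $\vx\mapsto(\Id+M)\vx$ in your averaging operators generally have full rank. Nothing shows that averaging over cosets $E+G$ of rank-$\le n$ matrices yields the required cancellations.
\item \emph{Collision.} This heuristic misplaces the difficulty. In a uniserial module the cyclic submodules $Rx_i$ form a chain, so some $Rx_j$ contains all entries, i.e.\ $x_i=r_ix_j$. Then $\vx=N\vx$, where $N$ is the rank-one matrix whose $j$-th column is $\vr$ and whose other columns vanish. So every tuple, in every arity, is fixed by a rank-one minor. Yet unary minors already fail for $\Z_{p^2}$ (Theorem~\ref{theorem:unarymodules}). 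What is needed is an identity with coefficients independent of $\vx$ in which the contributions of all other tuples cancel, an analogue of the $\theta$-space count in Theorem~\ref{thm:reprdelta}. You do not provide one.
\item \emph{Induction through $A/J^{n-1}A$.} Lift a formula for $A/J^{n-1}A$ to matrices over $\algR$ of rank $\le n-1$. It is only guaranteed to reproduce $f$ when $f$ factors through $(A/J^{n-1}A)^{n+1}$. For arbitrary $f$, the defect $f-s(f\circ M_1,\dots,f\circ M_m)$ is an unstructured $B$-valued function on all of $A^{n+1}$, not an error ``living in $J^{n-1}A$''. Lemma~\ref{lemma:onedim} with Lemma~\ref{lemma:ugsubalgebras}, or your induction hypothesis applied to the submodule $JA$, only yields operators that agree with $f$ on $(JA)^{n+1}$. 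Nothing in the plan handles tuples with an entry outside $JA$, which is where the difficulty lies.
\end{itemize}
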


\begin{proof}
This is proved in \cite[Theorem 1.4.]{MW-clonoidsmodules}, although the theorem does not explicitly mention \emph{uniform} generation. A careful reading of \cite{MW-clonoidsmodules} also shows, that, due to Proposition \ref{proposition:ugproduct}, it is enough to prove the statement for uni-serial modules $\algA$.
\end{proof}

Results for modules $\algA$ can be straightforwardly lifted to algebras that are polynomially equivalent to $\algA$, i.e., algebras that, together with all constant operations, generate the same clone as $\algA$. For some special cases, this was already observed in~\cite[Theorem 4.1.]{MW-clonoidsmodules}.

\begin{lemma}\label{lem:unifgenforpolyequiv}
Let $\algA = (A,+,0,-,(r)_{r\in \algR})$ be an $\algR$-module, and let $\algA'$ polynomially equivalent to $\alg A$ and let $\algB$ be an arbitrary algebra.
\begin{enumerate}
\item\label{ite:lem:unifgenforpolyequiv1} If $\clodO_{A,B}$ is uniformly generated by $n$-ary $(\algA, \algB)$-minors, then it is also uniformly generated by $(n+1)$-ary $(\algA', \algB)$-minors.
\item\label{ite:lem:unifgenforpolyequiv2} If $\clodO_{B,A}$ is uniformly generated by $n$-ary $(\algB, \algA)$-minors then it is also uniformly generated by $n$-ary $(\algB,\algA')$-minors.
\end{enumerate}
\end{lemma}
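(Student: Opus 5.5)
For both parts I will compare the clones $\Clo(\algA)$ and $\Clo(\algA')$. Polynomial equivalence means $\Clo(\algA\cup\{a\}_{a\in A}) = \Clo(\algA'\cup\{a\}_{a\in A})$. For a module, $\Clo(\algA)_{const}$ consists exactly of the affine maps $\vx\mapsto M\vx + \vb$. The operations of $\algA'$ are therefore affine maps. Conversely, since $\algA'$ is polynomially equivalent to $\algA$ and $\algA$ has the Mal'cev term $x-y+z$, so does $\algA'$. Hence $x-y+z \in \Clo(\algA')$: it is a polynomial of $\algA'$ that is idempotent, and idempotent polynomials of a Mal'cev algebra polynomially equivalent to a module are term operations, since an idempotent affine map $\sum r_i x_i + b$ has $\sum r_i = 1$ and $b = 0$. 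I will use the standard fact that $\Clo(\algA')$ contains every idempotent affine map, i.e. every $\vx\mapsto M\vx+\vb$ that is term-representable with constants and fixes the diagonal. That is the precise reference point to check.

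For (\ref{ite:lem:unifgenforpolyequiv1}), take the witness $f = s\circ(f\circ r_1,\dots,f\circ r_m)$ with $r_i(\vx) = M_i\vx$ of rank $\le n$, valid for all $f\colon A^k\to B$. I want to replace each $r_i$ by an $(n+1)$-factorizing tuple of term operations of $\algA'$. The linear maps $M_i$ need not be idempotent, but I can rewrite them as $M_i\vx = M_i\vx - M_i\cdot(x_1,\dots,x_1)^T + M_i\cdot(x_1,\dots,x_1)^T$. A cleaner route is the following. Every linear $M_i$ is a term of $\algA_{const}$, and therefore of $\algA'_{const}$, so $f\circ r_i$ is a minor of $f$ by a polynomial of $\algA'$. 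To remove the constants, I will use the extra dimension: write $M_i\vx = v_i(w_1(\vx),\dots,w_n(\vx))$ with linear $w_j, v_i$. I then introduce the additional coordinate $x_1$ and define
\begin{equation*}
w_j'(\vx) = w_j(\vx) + x_1 - w_j(x_1,\dots,x_1), \qquad w_{n+1}'(\vx) = x_1 .
\end{equation*}
Each $w_j'$ is idempotent, hence a term of $\algA'$. Likewise,
\begin{equation*}
v_i'(y_1,\dots,y_{n+1})_\ell = v_i(y_1-y_{n+1}+w_1(\vec y_{n+1}),\dots)_\ell ,
\end{equation*}
suitably corrected, can be arranged to be idempotent affine and to satisfy $v_i'\circ w' = M_i$. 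Here $\vec y_{n+1}$ is shorthand for the constant tuple $(y_{n+1},\dots,y_{n+1})$. The exact bookkeeping is to make both factors idempotent so that they lie in $\Clo(\algA')$, and this costs exactly one extra dimension. I expect this construction, which verifies that $M_i$ factors through $n+1$ idempotent affine maps, to be the only real obstacle. Once it is done, the same $s$ witnesses $(n+1,\algA',\algB)$-UG.

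For (\ref{ite:lem:unifgenforpolyequiv2}), the inner clone $\Clo(\algB)$ is unchanged, so $R_n(\algB)$ stays the same. I only need to replace the outer term $s(y_1,\dots,y_m) = \sum_i c_i y_i$ (linear in $\algA$) by a term of $\algA'$. I will plug in $f = $ constant $0$ functions, or use an idempotence argument: take $f$ constant with value $a$. Then $f\circ r_i = a$, so $a = s(a,\dots,a)$ for all $a$, i.e. $\sum c_i = 1$. Hence $s$ is idempotent affine, and therefore $s\in\Clo(\algA')$ by the fact above. This gives the claim with the same $n$.
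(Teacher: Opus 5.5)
\paragraph{Part (2).} Your part (2) is correct and a little more direct than the paper's. Taking $f$ constant shows that the outer term $s$ is already idempotent, so $s\in\Clo(\algA')$ by the folklore fact the paper also relies on. Your one-line justification of that fact presupposes a Mal'cev term, but the fact itself is fine. The paper instead passes to the translated term $g^y$ and substitutes $y:=f(q(\vx))$; here $g^y=g$ anyway.

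\paragraph{The gap in part (1).} You keep the minors $r_i(\vx)=M_i\vx$ and only look for a factorization $M_i=v_i'\circ w'$ through $n+1$ idempotent maps. This is impossible whenever $M_i\vec{1}\neq\vec{1}$. A composite of tuples of idempotent operations is idempotent, so $v_i'(w'(x,\dots,x))=(x,\dots,x)$. Write $M_i=V_iW_i$. In your own formula $v_i'(\vy,y_{n+1})=V_i(\vy-y_{n+1}\vec{1}+W_i\vec{1}\,y_{n+1})$ the composite is $M_i$, but the row sums of $v_i'$ are $M_i\vec{1}$. No ``correction'' can fix this without changing the composite.

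More fundamentally, $\vx\mapsto M_i\vx$ need not be a term operation of $\algA'$ at all. Take $\algA'=(A,x-y+z,(rx+(1-r)y)_{r\in R})$, whose clone contains only idempotent maps; then $f\circ r_i$ is not an $(\algA',\algB)$-minor. Nor can you assume the $M_i$ idempotent. For example, over a field with $n=1$, idempotent linear maps of rank $\le 1$ land in the diagonal, so such minors would only recover $f$ from its diagonal values. Hence the minors must be changed, and you need a reason why the identity survives the change. That is the missing idea.

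\paragraph{The missing step.} The paper obtains it from translation invariance of the hypothesis. For $a\in A$, apply the identity to $f\circ\tau_a$ with $\tau_a(\vx)=\vx+a\vec{1}$; this is legitimate because the identity holds for every $f\colon A^k\to B$. Substituting $\vx-a\vec{1}$ for $\vx$ gives $f=s\circ(f\circ r_1^a,\dots,f\circ r_m^a)$ with $r_i^a(\vx)=M_i(\vx-a\vec{1})+a\vec{1}$. Since this holds pointwise for all $\vx$ and all $a$, you may set $a:=x_1$. This yields $f(\vx)=s\bigl(f(r_1^{x_1}(\vx)),\dots,f(r_m^{x_1}(\vx))\bigr)$, where $r_i^{x_1}(\vx)=M_i\vx+(\vec{1}-M_i\vec{1})x_1$ is idempotent.

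Now your gadget works with the correct target. Set $w'(\vx)=(W_i\vx+(\vec{1}-W_i\vec{1})x_1,\;x_1)$ and $v_i'(\vy,y_{n+1})=V_i(\vy-y_{n+1}\vec{1})+y_{n+1}\vec{1}$. Both are idempotent and $v_i'\circ w'=r_i^{x_1}$, so $r_i^{x_1}\in R^{k,k}_{n+1}(\algA')$. The same $s$ then witnesses uniform generation by $(n+1)$-ary $(\algA',\algB)$-minors. The extra dimension is spent on carrying the translation parameter $x_1$, not on reproducing $M_i$.
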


\begin{proof}
It is a folklore result in commutator theory that $\Clo(\algA')$ must contain the ternary operation $x-y+z$ (the unique Mal'cev term in $\Clo(\algA)$) and all binary operations of the form $rx+ (1-r)y$ for $r\in \algR$. For short, let us write $x+^y z = x-y+z$ and $r^y(x) = rx+ (1-r)y$. If we substitute $y$ by some constant $a\in A$ we obtain the algebra $\algA^a = (A,+^a,a,-^a,(r^a)_{r\in \algR})$ that is isomorphic to $\algA$ via the isomorphism $x \mapsto x-a$.

To every $l$-ary term operation $t(x_1,\ldots,x_l) = \sum_{i=1}^{l} r_i x_i \in \Clo(\algA)$ we can similarly associate the $l+1$-ary operation $t^y(x_1,\ldots,x_{l},y) = r_1^y x_1 +^y r_2^y x_2 +^y \ldots +^y r_{l}^y x_{l} \in \Clo(\algA')$, and, for fixed $a\in A$, the $l$-ary operation $t^a(x_1,\ldots,x_{l}) = r_1^a x_1 +^a r_2^a x_2 +^a \ldots +^a r_{l}^a x_{l} \in \Clo(\algA^a)$. In particular, note that $r \in R_n^{k,k}(\algA)$ implies that $r^a \in R_{n}^{k,k}(\algA^a)$ and $r^y \in R_{n+1}^{k+1,k}(\algA')$.

We first show (\ref{ite:lem:unifgenforpolyequiv1}). Let $k > n+1$. By assumption, there exist $g \in \Clo(\algB)^{(l)}$ and $r_1,\ldots,r_{l} \in R_n^{k,k}(\algA)$ such that for all $f\colon A^k \to B$: $f = g \circ (f\circ r_1, \ldots,f\circ r_{l})$. Since $\algA$ and $\algA^a$ are isomorphic for every $a\in A$, also $f = g \circ (f\circ r_1^a, \ldots,f\circ r_l^a)$ holds for all $f\colon A^k \to B$ and every $a\in A$. Alternatively, we can also write this fact as the equation $f(\mathbf x) = g \circ (f(r_1^{y}(\vx)), \ldots,f(r_l^{y}(\vx))$ for all $f\colon A^k \to B$, where we regard $y$ as a new variable. If we substitute $y$ by $x_k$ (the last coordinate of $\vx$), we obtain the equation
$$f(\mathbf x) = g \circ (f(r_1^{x_k}(\vx)), \ldots,f(r_l^{x_k}(\vx))),$$
for all $f\colon A^k \to B$ and $\vx \in A^k$. It is easy to see that $r_i^{y}(\vx) \in R_{n+1}^{k+1,k}(\algA')$ implies that $r_i^{x_k}(\vx) \in R_{n+1}^{k,k}(\algA')$. This finishes the proof.

To see (\ref{ite:lem:unifgenforpolyequiv2}), let $k>n$ and let $g \in \Clo(\algA)$, $r_1,\ldots,r_l \in R_n(\algB)$ such that the formula $f = g \circ (f\circ r_1, \ldots,f\circ r_l)$ witnesses that $\clodO_{B,A}^{(k)}$ is uniformly generated by $n$-ary $(\algB, \algA)$-minors. Then, again by isomorphism, $f = g^{a} \circ (f\circ r_1, \ldots,f\circ r_l)$ holds for every $a \in A$. This in turn implies $f(\mathbf x) = g^{y} (f(r_1(\mathbf x)), \ldots,f(r_l(\mathbf x)))$, for any choice of $y$. If we substitute $y$ by any expression $f(q(\vx))$, such that $q \in R_n(\algA)$ (e.g. $y = f(x_1,x_1,\ldots,x_1)$), we obtain a formula that witnesses that $\clodO_{B,A}^{(k)}$ is uniform generation by $n$-ary $(\algB, \algA')$-minors.
\end{proof}

\subsection{Examples beyond affine clones}
Also Spark's finiteness result in the case in which the target $\cloB$ has a near unanimity operation~\cite{sparks-clonoids} can be derived using the criterion in Corollary \ref{corollary:unigen}. We include a proof of this fact as the proof in~\cite{sparks-clonoids} uses a different (relational) argument.

\begin{lemma}
Let $\cloA$ and $\cloB$ be clones on finite sets $A$, $B$ such that $\cloB$ contains a near unanimity operation of arity $l\geq 3$. Then $\clodO_{A,B}$ is uniformly generated by $|A|^l$-ary $(\cloA,\cloB)$-minors
\end{lemma}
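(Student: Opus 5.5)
The approach is to combine a pigeonhole argument on the coordinates of $k$-tuples with the Baker--Pixley interpolation theorem for clones with a near unanimity operation. Let $N = |A|^l$ and fix $k$. If $k \leq N$, the identity map $\id \in (\cloA^{(k)})^k$ lies in $R_N^{k,k}(\cloA)$, so $f = f \circ \id$ already witnesses uniform generation. So assume $k > N$.

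For every map $\sigma \colon [k] \to [k]$ with $|\sigma([k])| \leq N$, let $r_\sigma(\vx) = (x_{\sigma(1)}, \ldots, x_{\sigma(k)})$. This is a tuple of projections. It factors through the projections onto the at most $N$ coordinates in the image of $\sigma$, so $r_\sigma \in R_N^{k,k}(\cloA)$. Enumerate all such maps as $\sigma_1, \ldots, \sigma_m$. It then suffices to find a single $s \in \cloB^{(m)}$ with
\[ f(\vx) = s\bigl(f(r_{\sigma_1}(\vx)), \ldots, f(r_{\sigma_m}(\vx))\bigr) \quad \text{for all } f \colon A^k \to B,\ \vx \in A^k. \]

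\emph{Combinatorial step.} Let $\vx^1, \ldots, \vx^l \in A^k$ be arbitrary. Form the $k \times l$ matrix whose $j$-th column is $\vx^j$. Each row lies in $A^l$, so there are at most $|A|^l = N$ distinct rows. Let $\sigma$ send each coordinate $i \in [k]$ to a fixed representative of the class of coordinates whose row equals row $i$. Then $|\sigma([k])| \leq N$, and $r_\sigma(\vx^j) = \vx^j$ for all $j \in [l]$. Consequently $f(r_\sigma(\vx^j)) = f(\vx^j)$ for every $f$.

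\emph{Interpolation step.} Consider the finite set
\[ D = \bigl\{ \bigl(f(r_{\sigma_1}(\vx)), \ldots, f(r_{\sigma_m}(\vx))\bigr) \mid f \colon A^k \to B,\ \vx \in A^k \bigr\} \subseteq B^m, \]
and the intended partial operation $p \colon D \to B$ sending the tuple coming from $(f,\vx)$ to $f(\vx)$. Take any $l$ pairs $(f_1,\vx^1), \ldots, (f_l,\vx^l)$. Choose $\sigma = \sigma_i$ as in the combinatorial step for $\vx^1,\ldots,\vx^l$. Then the projection $\pi_i^m \in \cloB$ maps the $j$-th associated tuple to $f_j(\vx^j)$ for every $j$.

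Applying this with two pairs giving the same tuple in $D$ (repeating one pair if $l > 2$) shows that $p$ is well defined. More importantly, it shows that on every subset of $D$ of size at most $l$, the operation $p$ agrees with some member of $\cloB^{(m)}$. The Baker--Pixley theorem for a clone on a finite set containing an $l$-ary near unanimity operation then yields an $s \in \cloB^{(m)}$ that agrees with $p$ on all of $D$. This $s$ is the required uniform representation. One form of that theorem: a partial operation on a finite domain that is interpolated by term operations on all $l$-element subsets of its domain ($l-1$ would already suffice) is interpolated on the whole domain.

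\emph{Main obstacle.} The only non-trivial input is this uniform, finite-domain version of Baker--Pixley. If one prefers not to quote it, I would prove it directly by the standard induction on the size of the interpolated set. To pass from interpolating $t$ points to $t+1$ points, for $t \geq l$ apply the near unanimity operation $v$ to $l$ term operations, each interpolating $p$ on all but one of $l$ chosen points together with the remaining points. In each position of $v$ at most one argument is wrong, so $v$ returns the correct value.
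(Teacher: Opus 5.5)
Your proof is correct and follows essentially the paper's route: the paper also fixes any $l$ tuples of $A^k$ simultaneously by a single identification of variables with at most $|A|^l$ classes (its base case, via $\bigcap_{\alpha\in S}\ker(\alpha)$), and then glues these together with the near unanimity operation by induction. The difference is only in packaging. The paper runs this Baker--Pixley-style induction directly on uniformly representable operations $I_S$, indexed by sets $S$ of coordinate patterns $\alpha\colon[k]\to[|A|]$. You instead apply the interpolation argument once, to a single partial operation on the finite set $D\subseteq B^m$ of value tuples.
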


\begin{proof}
Without loss of generality we can assume that $\cloA$ is the projection clone on $A$. Let $m = |A|$ and $n = |A|^l$. By Theorem \ref{theorem:unigen}, it is enough to prove that for some $k>n$, $\clodO_{A,B}^{(k)}$ is uniformly generated by $n$-ary minors. 

For integers $a,b\in \N$, a $b$-ary tuple $\vx$ and a map $\alpha \colon [a] \to [b]$, let $\vx^{\alpha}$ denote the $a$-ary tuple defined by $\vx^{\alpha} = (x_{\alpha(1)},x_{\alpha(2)},\ldots, x_{\alpha(a)})$. Note that $r \in R_n^{k,k}(\cloA)$, if there is an $\alpha \colon [k] \to [k]$ whose image has at most $n$ elements, such that $r(\vx) = \vx^{\alpha}$.

We claim that for every set of functions $S \subseteq [m]^{[k]}$ there is an operation $I_S\colon \clodO_{A,B}^{(k)}\rightarrow\clodO_{A,B}^{(k)}$ that is uniformly representable by $n$-ary $(\cloA,\cloB)$-minors, such that $I_S(f)(\va^{\alpha}) = f(\va^{\alpha})$, for all $\alpha \in S$ and $\va \in A^{k}$. We prove this claim by induction on $|S|$.

First, let us assume that $|S| \leq l$. Let $\beta \colon [k] \to [k]$ be a map that maps every equivalence class of $\bigcap_{\alpha \in S} \ker(\alpha)$ to the same representative, e.g., $\beta(i) = \min(\{i' \in [k] \mid  \forall \alpha \in S \colon \alpha(i') = \alpha(i) \})$. Since $|S| \leq l$, and $\ker(\alpha)$ has at most $|A|$-many equivalence classes for every $\alpha \in S$, we get that $\beta$ attains at most $\leq |A|^l = n$ many values. Then, clearly $I_S(f)(\mathbf x) = f(\vx^\beta)$ satisfies our claim.

For an induction step, let $|S|>l$. Let us fix $l$ distinct elements $\alpha_1,\alpha_2,\ldots, \alpha_l \in S$, and let $S_i = S \setminus \{\alpha_i \}$. By induction assumption, for every set $S_i$ there exist operations $I_{S_i}$ satisfying the claim.

We define $I_S = v\circ (I_{S_1},\ldots, I_{S_l})$, where $v$ is the $l$-ary near unanimity operation of $\cloB$. By Lemma \ref{lemma:urobs}, $I_S$ is uniformly representable by $n$-ary $(\cloA,\cloB)$-minors. For a map $\alpha \in S$ and a tuple $\va \in A^m$, note that the equation $I_{S_{i}}(f)(\va^{\alpha}) = f(\va^{\alpha})$ holds for all but at most one index $i$. Since $v$ is a near unanimity operation, we get $I_S(f)(\va^{\alpha}) = f(\va^{\alpha})$. This finishes the proof of our claim.

Last, observe that every tuple $\va \in A^k$ is of the form $\vb^\alpha$ for some $\alpha \in [m]^{[k]}$, $\vb \in A^m$: to verify this, simply take some enumeration $A = \{b_1,b_2,\ldots,b_m\}$ of $A$, set $\vb = (b_1,b_2,\ldots,b_m)$, and $\alpha(j) = i$ if $a_j = b_i$. Hence, for $S = [m]^{[k]}$ we conclude that $I_S(f) = f$ for all $f \in \clodO_{A,B}^{(k)}$. Since $I_S$ had a uniform representation by $n$-ary minor, we are done.
\end{proof}

If the uniform generation requires the target clone $\cloB$ to contain functions that are not essentially unary (i.e., that depend on more than one argument).

\begin{lemma} \label{lemma:nonUG}
Let $\cloA$ and $\cloB$ be clones on finite sets $A$, $B$ with at least 2 elements and assume that $\clodO_{A,B}$ is uniformly generated by $n$-ary $(\cloA,\cloB)$-minors. Then $\cloB \neq \Clo(\cloB^{(1)})$. 
\end{lemma}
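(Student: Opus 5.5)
We argue by contraposition: assume $\cloB = \Clo(\cloB^{(1)})$, i.e. every operation in $\cloB$ is essentially unary. We then show that $\clodO_{A,B}$ cannot be $(n,\cloA,\cloB)$-UG for any $n$. If $\clodO_{A,B}^{(k)}$ were UG for some $k>n$, there would be $s \in \cloB^{(m)}$ and $r_1,\ldots,r_m \in R_n^{k,k}(\cloA)$ with $f = s\circ(f\circ r_1,\ldots,f\circ r_m)$ for all $f\colon A^k\to B$. Since $s$ is essentially unary, $s(y_1,\ldots,y_m) = u(y_j)$ for some index $j$ and some unary $u \in \cloB^{(1)}$. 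The identity then collapses to $f = u\circ f\circ r$ with a single $r = r_j \in R_n^{k,k}(\cloA)$, for every $f\colon A^k \to B$.

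Next we derive a contradiction from the fact that this single equation has to hold for all $f$. The plan is to show that $r\colon A^k\to A^k$ must be injective, hence a bijection on the finite set $A^k$. Suppose $r(\va)=r(\vb)$ with $\va\neq\vb$. Since $|B|\ge 2$, we can pick $f$ with $f(\va)\neq f(\vb)$. Then $f(\va) = u(f(r(\va))) = u(f(r(\vb))) = f(\vb)$, which is a contradiction. So $r$ is a permutation of $A^k$.

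It remains to show that $r\in R_n^{k,k}(\cloA)$ with $k>n$ cannot be injective. By definition $r = v\circ w$ with $w\in(\cloA^{(k)})^n$, so $w\colon A^k\to A^n$. Because $|A|\ge 2$ and $k>n$, we have $|A^k| > |A^n|$, and hence $w$ is not injective. Therefore $r$ is not injective either, contradicting the previous paragraph. This proves $\cloB\neq\Clo(\cloB^{(1)})$.

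The steps are mostly routine; the one point that needs care is the reduction of $s$ to a single minor. Essential unarity of $s$ means that $s$ depends on at most one coordinate. If it depends on none, then $s$ is constant with some value $b$, so $f = b$ for all $f$, which fails because $|B|\ge2$. If it depends on exactly one coordinate $j$, we write $s = u(y_j)$ with $u$ the unary operation obtained by identifying all variables, which lies in $\cloB^{(1)}$. We also need some $k>n$ for which UG is assumed; the definition of $(n,\cloA,\cloB)$-UG requires it for every $k$, so we may simply take $k=n+1$.
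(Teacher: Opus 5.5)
Your proof is correct and follows the paper's argument: you collapse $s$ to a single minor $u\circ f\circ r$, use $|B|\ge 2$ to force $r$ to be injective, and contradict this because $r$ factors through $A^n$ with $k>n$. You also spell out two points the paper leaves implicit: why an essentially unary $s\in\cloB^{(m)}$ reduces to some $u\in\cloB^{(1)}$ applied to one coordinate, and the count $|A^k|>|A^n|$ that makes $r$ non-injective.
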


\begin{proof}
For contradiction, let assume that $\cloB = \Clo(\cloB^{(1)})$ and $\clodO_{A,B}$ is uniformly generated by $n$-ary $(\cloA,\cloB)$-minors. Thus, for every $k > n$ there are $s \in \cloB^{(1)}$ and $r\in R_n^{k,k}(\cloA)$, such that $f = s \circ f \circ r$ for all $f \colon A^k\to B$. For every pair $\va_1 \neq \va_2$ in $A^k$ we can find a function $f$ with $s(f(r(\va_1))) = f(\va_1) \neq f(\va_2) = s(f(r(\va_2)))$. It follows that $r(\va_1) \neq r(\va_2)$ for all $\va_1 \neq \va_2$, i.e., $r$ is injective. However, for $k>n$, no element of $R_n^{k,k}(\cloA)$ is injective - contradiction.
\end{proof}

Lehtonen and Szendrei showed in~\cite{LS-discriminator} that every clonoid from the discriminator clone to the projection clone on the same finite set is finitely generated. By Lemma \ref{lemma:nonUG} this finiteness result cannot be proven using the criterion in Corollary \ref{corollary:unigen}.

\subsection{Interpolation by uniformly representable operations}

A helpful step in proving that $\mathcal O_{A,B}^{(k)}$ is uniformly generated by $n$-ary minors may be to first construct $n$-UR operation $I$, such that $I(f)(\vx)$ agrees with $f(\vx)$ for all $f$ and all tuples $\vx \in K$ from some `nice' subset $K \subseteq A^k$. 

An instance of this can be found in \cite[Lemma 2.14 (3)]{MW-clonoidsmodules}, where it is shown that for modules $\algA,\algB$, uniform generation from submodules of $\algA$ to $\algB$ can be lifted to such an `interpolating' operation $I$ that works on the union of said submodules. In the following, we generalize this result to the setting that only requires the target algebra to have a Mal'cev term:

\begin{lemma} \label{lemma:ugsubalgebras}
Let $\algA$, $\algB$ be algebras, such that $\algB$ has a Mal'cev term. Let $\algC_1,\algC_2,\ldots,\algC_l \leq \algA$ be subalgebras such that for every $i \in [l]$, $\clodO_{C_i,B}$ is uniformly generated by $n$-ary minors. Then, there exists a $(n,\algA,\algB)$-UR operation $I$ on $\clodO_{A,B}^{(n+1)}$ with $I(f)(\mathbf x) = f(\mathbf x)$ for $\mathbf x\in \bigcup_{i \in [l]} C_i^{n+1}$.
\end{lemma}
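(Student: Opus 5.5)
The plan is to build $I$ by induction on $l$, gluing one subalgebra at a time with the Mal'cev term $m$ of $\algB$. We use throughout the closure properties of uniformly representable operations from Lemma~\ref{lemma:urobs}.

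\emph{Step 1: lifting the representation from each $\algC_i$ to $\algA$.} Fix $i \in [l]$. Since $\clodO_{C_i,B}^{(n+1)}$ is $(n,\algC_i,\algB)$-UG, there is a representation formula
$$f = s_i\circ(f\circ r_{i,1},\ldots,f\circ r_{i,m_i}),$$
where $s_i$ is a term of $\algB$ and each $r_{i,j}$ factors as $v_{i,j}\circ w_{i,j}$. Here $w_{i,j}$ is an $n$-tuple of $(n+1)$-ary terms and $v_{i,j}$ is an $(n+1)$-tuple of $n$-ary terms. As in Definition~\ref{def:unirepformula}, we regard these as terms in the common language of $\algA$ and $\algC_i$. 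Since $\algC_i\le\algA$, the term operations of $\algC_i$ are the restrictions of the corresponding term operations of $\algA$.

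Interpreting the same terms in $\algA$, we obtain an operation
$$J_i(f)=s_i^{\algB}\circ\bigl(f\circ r_{i,1}^{\algA},\ldots,f\circ r_{i,m_i}^{\algA}\bigr)$$
on $\clodO_{A,B}^{(n+1)}$. Each $r^{\algA}_{i,j}=v^{\algA}_{i,j}\circ w^{\algA}_{i,j}$ lies in $R_n^{n+1,n+1}(\algA)$, so $J_i$ is $(n,\algA,\algB)$-UR.

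Moreover, $r^{\algA}_{i,j}$ maps $C_i^{n+1}$ into $C_i^{n+1}$ and agrees there with $r_{i,j}^{\algC_i}$. Applying the $\algC_i$-formula to $f\restrict{C_i^{n+1}}$ therefore gives $J_i(f)(\vx)=f(\vx)$ for all $f\in\clodO_{A,B}^{(n+1)}$ and all $\vx\in C_i^{n+1}$. This also settles the case $l=1$ with $I=J_1$.

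\emph{Step 2: the gluing step.} Suppose $I$ is $(n,\algA,\algB)$-UR and satisfies $I(f)(\vx)=f(\vx)$ on $U=\bigcup_{i\le t}C_i^{n+1}$. Write $J=J_{t+1}$ and define
$$I'(f)=m\bigl(J(f),\,J(I(f)),\,I(f)\bigr).$$
We check the two regions separately.
\begin{itemize}
\item For $\vx\in C_{t+1}^{n+1}$, Step 1 applied to both $f$ and $I(f)$ gives $J(f)(\vx)=f(\vx)$ and $J(I(f))(\vx)=I(f)(\vx)$. Hence $I'(f)(\vx)=m(f(\vx),I(f)(\vx),I(f)(\vx))=f(\vx)$.
\item For $\vx\in U$, the hypothesis gives $I(f)(\vx)=f(\vx)$. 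We also need $J(I(f))(\vx)=J(f)(\vx)$. This holds because every $r^{\algA}_{t+1,j}$ is a term operation, so it maps $C_i^{n+1}$ into itself for each $i$. Thus $J(g)(\vx)$ depends only on $g\restrict{U}$ whenever $\vx\in U$, and $I(f)$ agrees with $f$ on $U$. So $I'(f)(\vx)=m(J(f)(\vx),J(f)(\vx),f(\vx))=f(\vx)$.
\end{itemize}

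\emph{Step 3: uniform representability of $I'$.} By Lemma~\ref{lemma:urobs}~(4), $J\circ I$ is $(n,\algA,\algB)$-UR. By Lemma~\ref{lemma:urobs}~(3), applied with the term $m$ of $\algB$, the operation $I'$ is $(n,\algA,\algB)$-UR. Running the induction up to $t=l$ yields the desired $I$.

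The step I expect to require the most care is the locality check in Step 2, namely that $J(g)$ on $U$ depends only on $g\restrict{U}$. The order of the arguments in $m$ is chosen precisely so that this check is needed only on $U$, and it relies on term operations preserving each subalgebra. The other delicate point is Step 1: one must make sure the rank-$n$ factorizations are given by terms, so that they transfer from $\algC_i$ to $\algA$.
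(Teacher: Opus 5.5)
Your proof is correct and is essentially the paper's argument: induction over the subalgebras, gluing with the Mal'cev term, and on the old region the same locality check that minors built from term operations of $\algA$ preserve each $C_i^{n+1}$. The only differences are cosmetic. The paper uses the mirrored combination $m(I(f),J(I(f)),J(f))$ (in your notation), which needs exactly the same locality check, so the argument order is not forced. Your Step 1 also spells out the lifting of the $(\algC_i,\algB)$-representations to $(\algA,\algB)$-minors, which the paper takes for granted.
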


\begin{proof}
Let $m$ be the Mal'cev term of $\algB$, and let $k = n+1$. Our assumption implies, that for every $i \in [l]$ there are $(n,\algA,\algB)$-UR operations $I_i\colon \clodO_{A,B}^{(k)} \to \clodO_{A,B}^{(k)}$ such that $I_i(f)(\vx) = f(\vx)$ if $\vx \in C_i^{k}$. 

Let us inductively define the operations $J_1(f) = I_1(f)$ and $$J_{i+1}(f) = m(J_i(f)(\vx),I_{i+1}(J_i(f))(\vx),I_{i+1}(f)(\vx))$$ for every $i < l$. We claim that, for every $i\in [l]$: $J_i(f)(\vx) = f(\vx)$ for all tuples $\vx \in C_1^k\cup  C_2^k \cup \cdots \cup  C_i^k$. We prove this claim by induction on $i$.

For $i=1$ the statement clearly holds. So let us consider an induction step $i \to i+1$. First, let $\vx \in C_j^k$, for $j \leq i$. By induction assumption, we know that $J_i(f)(\vx) = f(\vx)$. Furthermore, note that, for every $r \in R_n(\algA)$, we get $r(\vx) \in C_j^k$, since $C_j$ is a subalgebra of $\algA$. Let $I_{i+1}(f)(\vx) = s(f(r_1(\vx)),\ldots,f(r_l(\vx)))$ be a representation of $I_{i+1}$. Then, for $\vx \in C_j^k$:
\begin{align*}
I_{i+1}(J_i(f))(\vx) &=  s(J_i(f)(r_1(\vx)),\ldots,J_i(f)(r_l(\vx))\\
&=  s(f(r_1(\vx)),\ldots,f(r_l(\vx)) \\
&= I_{i+1}(f)(\vx).
\end{align*}
By the Mal'cev identities we obtain $J_{i+1}(f)(\vx) = m(f(\vx),I_{i+1}(f)(\vx),I_{i+1}(f)(\vx)) = f(\vx)$.

Next, assume that $\vx \in C_{i+1}^k$. Then, by assumption, $I_{i+1}(J_i(f))(\vx) = J_i(f)(\vx)$ holds, and therefore  $J_{i+1}(f)(\vx) = m(J_i(f)(\vx),J_i(f)(\vx),f(\vx)) = f(\vx)$. We set $I = J_l$. By Lemma \ref{lemma:urobs} (3) and (4) $I$ can be uniformly represented by $n$-ary minors, thus we are done.
\end{proof}

We next show, how `uniform interpolation' is also invariant under endomorphisms of the algebra $R_n^{k,k}(\algA)$, i.e., maps $e \colon A^k \to A^k$ such that $e(r(\vx)) = r(e(\vx))$ holds for every $r \in R_n^{k,k}(\algA)$. Note that the algebra $(A^k, R_n^{k,k}(\algA))$ is a (unary) term reduct of the $k$-th matrix-power $\algA^{[k]}$ of $\algA$ (in the sense of \cite[Exercise 3.12 (4)]{HM-TCT}), so, this includes all endomorphisms of $\algA^{[k]}$. If $\algA = \F^l$ is a vector space, then all maps $X \mapsto XT$, for $X \in \F^{k\times l}$ and $T \in \F^{l\times l}$ are endomorphisms of $R_n^{k,k}(\algA)$.

\begin{lemma} \label{lemma:endomorphisms}
Let $\algA$ and $\algB$ be arbitrary algebras, let $\va \in A^n$, and assume that $I\colon \clodO_{A,B}^{(k)} \to \clodO_{A,B}^{(k)}$ is $(n,\algA,\algB)$-UR such that $I(f)(\va) = f(\va)$, for all $f \in \clodO_{A,B}^{(k)}$. Then, for every $e \in \End( R_n^{k,k}(\algA))$, also $I(f)(e(\va)) = f(e(\va))$ holds.
\end{lemma}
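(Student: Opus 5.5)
The plan is to transport the identity $I(f)(\va)=f(\va)$ along $e$ by precomposing $f$ with $e$. (There is a typo in the statement: $\va$ should be taken in $A^k$, since $I(f)$ is $k$-ary.) First I fix a uniform representation of $I$, namely $s\in\Clo(\algB)^{(m)}$ and $r_1,\ldots,r_m\in R_n^{k,k}(\algA)$ with
\[
I(f)(\vx)=s\bigl(f(r_1(\vx)),\ldots,f(r_m(\vx))\bigr)
\]
for every $f\in\clodO_{A,B}^{(k)}$ and every $\vx\in A^k$.

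Given an arbitrary $f\colon A^k\to B$, I would apply the hypothesis not to $f$ but to $g=f\circ e$. This is again an element of $\clodO_{A,B}^{(k)}$: it is just some function $A^k\to B$, and $I$ is defined on all of $\clodO_{A,B}^{(k)}$. The hypothesis then gives $I(g)(\va)=g(\va)=f(e(\va))$.

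Next I expand the left-hand side using the representation and the commutation $e\circ r_i=r_i\circ e$, which holds because $e\in\End(R_n^{k,k}(\algA))$:
\[
I(g)(\va)=s\bigl(f(e(r_1(\va))),\ldots,f(e(r_m(\va)))\bigr)=s\bigl(f(r_1(e(\va))),\ldots,f(r_m(e(\va)))\bigr)=I(f)(e(\va)).
\]
Combining the two displays gives $I(f)(e(\va))=f(e(\va))$, which is the claim.

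There is no real obstacle. The one point to be careful about is that the argument needs $f\circ e$ to lie in the domain of $I$. That is fine here because $I$ is total on $\clodO_{A,B}^{(k)}$, and $e$ need not be a term operation of $\algA$. If one wanted a version for partial $I$ defined on a clonoid, an extra closure assumption under precomposition with $e$ would be needed.
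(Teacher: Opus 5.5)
Your proposal is correct and matches the paper's proof: it fixes a uniform representation of $I$, commutes $e$ past the $r_i$, and applies the hypothesis to $f\circ e$, which lies in the domain of $I$ because $I$ is total on $\clodO_{A,B}^{(k)}$. You are also right that the statement should read $\va\in A^k$ rather than $\va\in A^n$.
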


\begin{proof}
Let $I(f) = s\circ (f\circ r_1, \ldots, f\circ r_l)$ with $s \in \Clo(\algB)$ and $r_1,\ldots,r_l \in R_n(\algA)$ be a representation of $I$. Let $e \in \End(R_n^{k,k}(\algA))$. Then 
\begin{align*}
I(f)(e(\va)) &= s (f(r_1(e(\va))), \ldots, f(r_l(e(\va))))\\
&= s (f(e(r_1(\va))), \ldots, f(e(r_l(\va))))\\
&= I(f\circ e)(\va) = f(e(\va)).
\end{align*}
\end{proof}

In the case that $\algB$ is a module, we can prove some additional results. We are going to use the following notation:

\begin{definition}
Let $\algB$ be a module, and $A$ a arbitrary set. For $\va \in A^k$, and  $b \in B$, we define the map $\delta_\va^b \colon A^k \to B$ by 
$$
\delta_\va^b(\vx) = \begin{cases} b & \text{if } \vx = \va \\ 0 &\text{else.} \end{cases}
$$
If $\algB$ is a regular module over a ring with unity $1$, we will write $\delta_\va = \delta_\va^1$ for short.
\end{definition}

\begin{definition}
For a module $\algB$, we define the \emph{support} $\supp(f)$ of a function $f\colon A^n\to B$ as $\supp(f) = \{\va \in A^n \mid f(\va) \neq 0\}$.
\end{definition}

The following lemma shows how (if the target algebra $\algB$ is a module) we can lift uniform generation of characteristic functions $\delta_\va$, to all functions, whose support contains only elements from the orbit of $\va$, under the action of $\Aut(R_n^{k,k}(\algA))$:

\begin{lemma} \label{lemma:orbitsupport}
Let $\algA$ be a finite algebra, $T \subseteq A^k$, and $\algR$ be a ring such that $\{\delta_\va \mid \va \in T\}$ is uniformly generated by $n$-ary $(\algA,\algR)$-minors (where $\algR$ is considered as regular module).

Then, for $T' = \{ e(\va) \mid e \in \Aut(R_n^{k,k}(\algA)), \va \in T \}$ and every $\algR$-module $\algB$, also the set $\{ f\colon A^k\to B \mid \supp(f) \subseteq T' \}$ is uniformly generated by $n$-ary $(\algA,\algB)$-minors.
\end{lemma}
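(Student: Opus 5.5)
We fix a single uniform representation witnessing the hypothesis: an $s \in \Clo(\algR)^{(m)}$, i.e.\ $s(y_1,\ldots,y_m) = \sum_{j} c_j y_j$ with $c_j \in R$, and $r_1,\ldots,r_m \in R_n^{k,k}(\algA)$, such that
\[
\delta_\va = s\circ(\delta_\va\circ r_1,\ldots,\delta_\va\circ r_m) \quad \text{for all } \va \in T.
\]
We will show that the same formula, with $s$ now read as the term $\sum_j c_j y_j$ of the $\algR$-module $\algB$, works for every $f\colon A^k \to B$ with $\supp(f)\subseteq T'$.

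First we transfer the identity from $T$ to $T'$. Let $e \in \Aut(R_n^{k,k}(\algA))$, so $e$ is a bijection of $A^k$ commuting with every $r\in R_n^{k,k}(\algA)$. Then $\delta_{e(\va)} = \delta_\va\circ e^{-1}$. Since $e^{-1}$ is also an automorphism, $\delta_\va\circ e^{-1}\circ r_j = \delta_\va \circ r_j\circ e^{-1}$. Precomposing the identity for $\delta_\va$ with $e^{-1}$ therefore gives
\[
\delta_{e(\va)} = s\circ(\delta_{e(\va)}\circ r_1,\ldots,\delta_{e(\va)}\circ r_m).
\]
This is the argument of Lemma \ref{lemma:endomorphisms}, applied to functions rather than to points. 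So the identity holds for all $\delta_\vb$ with $\vb\in T'$.

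Next we pass from $\algR$ to an arbitrary $\algR$-module $\algB$ and to general coefficients $b \in B$. Pointwise, the identity says that for every $\vx \in A^k$,
\[
[\vx=\vb] = \sum_j c_j [r_j(\vx)=\vb] \quad \text{in } R,
\]
where $[\cdot]$ takes values $0,1 \in R$. Multiplying by $b\in B$, and using that $\algB$ is a unital $\algR$-module, gives
\[
\delta^b_\vb = \sum_j c_j\,(\delta^b_\vb\circ r_j).
\]

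Finally, any $f$ with $\supp(f)\subseteq T'$ is the finite sum $f = \sum_{\vb\in T'} \delta^{f(\vb)}_\vb$, since $A$ is finite. The map $g\mapsto \sum_j c_j (g\circ r_j)$ is additive in $g$. Hence
\[
f = s\circ(f\circ r_1,\ldots,f\circ r_m),
\]
with $s$, $r_1,\ldots,r_m$ independent of $f$, which is exactly uniform generation by $n$-ary $(\algA,\algB)$-minors.

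The only delicate point is the first step: we need automorphisms, so that $e^{-1}$ exists and $\delta_{e(\va)}=\delta_\va\circ e^{-1}$. The rest is linearity of module terms, and this is why $\algB$ is assumed to be an $\algR$-module.
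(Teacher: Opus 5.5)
Your proof is correct and essentially the paper's: one fixed representation $\sum_j c_j\,(f\circ r_j)$, scaling from $\delta_\va$ to $\delta_\va^b$, transfer along automorphisms by the commuting argument of Lemma \ref{lemma:endomorphisms}, and additivity for the support decomposition. The only difference is that you do the orbit transfer before the scaling and verify it directly at the level of functions; this is immaterial.
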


\begin{proof}
Let $I$ be an $(n,\algA,\algR)$-UR operation, such that $I(\delta_\va) = \delta_\va$, for all $\va \in T$, and let $\algB$ be a general $\algR$-module. Then $I$ can be represented by a formula $I(f)(\vx) = \sum_{s \in R_n^{k,k}(\algA)} r_s f(s(\vx))$, where every $r_s \in R$. Let $\algB$ be an arbitrary $\algR$-module. The formula for $I$ also induces operation $I \colon \clodO_{A,B}^{(k)} \to \clodO_{A,B}^{(k)}$, which, for simplicity, we will denote by the same letter. It is straightforward to see that $I(\delta_\va) = \delta_\va$ implies

$$I(\delta_\va^{b})(\vx) = \sum_{s \in R_n^{k,k}(\algA)} r_s \delta_\va^{b}(s(\vx)) = \sum_{s \in R_n^{k,k}(\algA)} r_s \delta_\va(s(\vx)) \cdot b = I(\delta_\va)\cdot b = \delta_\va\cdot b = \delta_{\va}^b.$$

By Lemma \ref{lemma:endomorphisms}, also $I(\delta_{e^{-1}(\va)}^b)(\vx) = I(\delta_\va^b)(e(\vx)) = \delta_\va^b (e(\vx)) = \delta_{e^{-1}(\va)}^b(\vx)$ holds, for all $e \in \Aut(R_n^{k,k}(\algA))$. Thus $I(\delta_\va^b) = \delta_\va^b$, for all $\va \in T'$, $b\in B$. Next, note that every function $f$ with support in $T'$ can be written as the sum $f = \sum_{\va \in T'} \delta_{\va}^{f(\va)}$. Since $\algB$ is a module, $I$ is linear, i.e., $I(f+g) = I(f) + I(g)$. Thus, we get that $I(f) = \sum_{\va \in T'} I(\delta_{\va}^{f(\va)}) = \sum_{\va \in T'} \delta_{\va}^{f(\va)} = f$, which finishes the proof.
\end{proof}

Lemma \ref{lemma:orbitsupport} implies two important facts: First, in order to prove that all operations from some algebra $\algA$ to a target $\algR$-module $\algB$ are uniformly generated by $n$-ary minors, it suffices to prove this for characteristic functions $\delta_\va$, such that the orbits of the tuples $\va$ under $\Aut(R_n^{k,k}(\algA))$ cover all of $A^k$.
Secondly, it is actually enough to only consider the regular module $\algR$ as target, in order to prove uniform generation for \emph{all} $\algR$-modules $\algB$:

\begin{corollary} \label{corollary:orbitsupport}
Let $\algA$ be a finite algebra and $\algR$ be a ring, considered as regular module. If $\clodO_{\algA,\algR}$ is uniformly generated by $k$-ary minors, then so is $\clodO_{\algA,\algB}$, for every $\algR$-module $\algB$.
\end{corollary}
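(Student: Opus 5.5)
The plan is to apply Lemma \ref{lemma:orbitsupport} with $T = A^k$ for every arity $k$. Since the identity is an automorphism of $R_n^{k,k}(\algA)$, we then have $T' = A^k$. The set $\{f\colon A^k\to B \mid \supp(f)\subseteq A^k\}$ is all of $\clodO_{A,B}^{(k)}$, so Lemma \ref{lemma:orbitsupport} would give exactly that $\clodO_{A,B}^{(k)}$ is uniformly generated by $n$-ary $(\algA,\algB)$-minors. Here $n$ denotes the arity in the hypothesis; the statement calls it $k$. Doing this for every $k$ yields the corollary by Definition \ref{def:unifgen}.

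It remains to check the hypothesis of Lemma \ref{lemma:orbitsupport}, namely that $\{\delta_\va \mid \va\in A^k\}$ is uniformly generated by $n$-ary $(\algA,\algR)$-minors. By assumption $\clodO_{A,R}^{(k)}$ is uniformly generated by $n$-ary $(\algA,\algR)$-minors. This gives a single $s\in\Clo(\algR)$ and $r_1,\ldots,r_m\in R_n^{k,k}(\algA)$ with $f = s\circ(f\circ r_1,\ldots,f\circ r_m)$ for every $f\colon A^k\to R$. In particular this holds for every $\delta_\va$. Since uniform generation passes to subsets, the hypothesis is immediate.

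One detail needs care. In the proof of Lemma \ref{lemma:orbitsupport} the term $s\in\Clo(\algR)$ is a linear combination $\sum_i c_i x_i$ with $c_i\in R$. The same formal expression must define a term of $\algB$, so that the induced operation on $\clodO_{A,B}^{(k)}$ is additive and satisfies $I(\delta_\va^b)=I(\delta_\va)\cdot b$. This holds because $\algB$ is an $\algR$-module. There is a possible subtlety with left versus right multiplication: scalars act on the left of $b$, while the computation writes $\delta_\va\cdot b$. However, $\delta_\va(s(\vx))\in\{0,1\}$, so $r_s\,\delta_\va(s(\vx))\,b$ is unambiguous. The lemma thus applies verbatim to arbitrary, possibly noncommutative, $\algR$. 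I expect no real obstacle: the corollary is essentially Lemma \ref{lemma:orbitsupport} with trivial orbits.
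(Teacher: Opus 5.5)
Your proposal is correct and matches the paper's argument: the corollary is presented as an immediate consequence of Lemma~\ref{lemma:orbitsupport} with $T = A^k$ (so $T' = A^k$), and the hypothesis on $\{\delta_\va\}$ is inherited from uniform generation of all of $\clodO_{A,R}^{(k)}$. Your left-versus-right worry is harmless in any case, since $\delta_\va(y)\in R$ acts on $b$ from the left and $\sum_i c_i\,(\delta_\va(r_i(\vx))\,b) = \bigl(\sum_i c_i\,\delta_\va(r_i(\vx))\bigr)b$ by the module axioms.
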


\section{The main result}\label{sec:mainres} 

In this section, we are going to show that the $(k+1)$-ary functions from $\algA$ to $\algB$ are uniformly generated by their $k$-ary $(\algA, \algB)$-minors, if $\algA = \F^k$ is a finite vector space, and $\algB$ is a module in which $\chars(\F)$ is invertible. Corollary \ref{corollary:unigen} then implies that every clonoid from $\algA$ to $\algB$ is generated by its $k$-ary part.

Using the notation we introduced in the preliminary section, recall that $R_n(\algA)$ consists of all operations $X \mapsto MX$, such that $M$ is a matrix over $F$ of rank less than or equal to $n$. Thus, our goal is to prove the existence of coordinates $\alpha_M \in R_\mathbf{B}$ for every $(k+1)\times (k+1)$-matrix $M$ 
over $\F$ such that for each 
$f\colon \F^{(k+1) \times k} \to B$ we have 
\[
\forall X\in \F^{(k+1) \times k}\colon f(X)=
\sum_{\rk(M) \leq k} \alpha_M f(MX).
\]

Our proof proceeds, by first using the results from Section \ref{sec:unifgen}, to reduce the problem to the question, whether we can generate a characteristic function $\delta_{X_0}$, for matrices of full rank, by their $k$-ary minors. We discuss this reduction in Subsection \ref{sect:induction}. Then, we provide a proof of this fact, in Subsection \ref{sect:charfun}. In Subsection \ref{sect:proofsummary} we summarize the proof and discuss some immediate consequences. In Subsection \ref{sect:clonoidlattice} we use our results to obtain a more explicit description of the lattice of $(\F^n,\algB)$-clonoids. 

\subsection{Reduction to functions of singleton support} \label{sect:induction}

We start with the following lemma that can be proved similarly to Lemma \ref{lemma:orbitsupport}:

\begin{lemma} \label{lemma:basicinterpolation}
Let $X_0 \in \F^{m \times k}$ be a matrix with $\rk(X_0) = n$, and assume that $J$ is a $(n,\F^k,\algB)$-UR operation, given by a formula
\begin{equation} \label{eq:basicint1}
J(f)(X) = \sum_{\Col{M}=\Col{X_0}} \alpha_M f(MX),
\end{equation}
such that $J(\delta_{X_0})(X) = \delta_{X_0}(X)$, for all $X$ of rank $\rk(X) \leq i$.

Then, for all $f$ with $\supp(f) \subseteq \{X \mid \rk(X) \geq n\}$ and every $X\in \F^{m \times k}$ with $\rk(X) \leq i$:
\begin{equation}
J(f)(X) = \begin{cases} f(X) &\text{if } \Col{X} = \Col{X_0},\\
0 & \text{else}. \end{cases}
\end{equation}
\end{lemma}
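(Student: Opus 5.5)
The plan is to exploit linearity of $J$ in $f$ (the target $\algB$ is a module) together with the equivariance argument behind Lemma \ref{lemma:endomorphisms}, exactly as in the proof of Lemma \ref{lemma:orbitsupport}. Write $f = \sum_{Y} \delta_Y^{f(Y)}$, the sum ranging over $Y$ with $\rk(Y)\geq n$. Since $J(\delta_Y^b) = J(\delta_Y)\cdot b$ (the same computation as in Lemma \ref{lemma:orbitsupport}), it suffices to evaluate $J(\delta_Y)(X)$ for $\rk(X)\leq i$ and $\rk(Y)\geq n$. By formula \eqref{eq:basicint1},
$$J(\delta_Y)(X) = \sum_{\Col{M}=\Col{X_0},\ MX = Y} \alpha_M .$$

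The first step handles the vanishing terms. If $MX = Y$ with $\Col{M} = \Col{X_0}$, then $\rk(M) = n$, so $\rk(Y)\leq n$. Together with the hypothesis $\rk(Y)\geq n$ this forces $\rk(Y) = n$ and $\Col{Y}\subseteq \Col{M} = \Col{X_0}$, hence $\Col{Y}=\Col{X_0}$ by comparing dimensions. So $J(\delta_Y)(X) = 0$ unless $\Col{Y} = \Col{X_0}$. In particular, if $\Col{X}\neq\Col{X_0}$, a nonzero contribution would need $Y = X$, since otherwise the sum for $J(f)(X)$ still only picks up such $Y$ with column space $\Col{X_0}$. I will handle this together with the next step.

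The second step treats $Y$ with $\Col{Y} = \Col{X_0}$. Since $Y$ and $X_0$ have equal column spaces, we can write $Y = X_0 T$ with $T\in \F^{k\times k}$ invertible: take a rank factorization $X_0 = AU$, $Y = AU'$, where $U,U'$ have full row rank $n$, and extend to an invertible $T$ with $U T = U'$. The map $e\colon X\mapsto XT$ is an automorphism of $R_n^{m,m}(\F^k)$, because it commutes with left multiplication. As in Lemma \ref{lemma:endomorphisms} we get $J(\delta_Y)(X) = J(\delta_{X_0}\circ e^{-1})(X) = J(\delta_{X_0})(XT^{-1})$, and $\rk(XT^{-1}) = \rk(X)\leq i$. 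The hypothesis then gives $J(\delta_Y)(X) = \delta_{X_0}(XT^{-1}) = \delta_Y(X)$.

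Combining the two steps, for $\rk(X)\leq i$ we obtain $J(f)(X) = \sum_{\Col{Y}=\Col{X_0}} f(Y)\,\delta_Y(X)$. This equals $f(X)$ if $\Col{X}=\Col{X_0}$ and $0$ otherwise. The main obstacle I expect is the orbit claim that the right action of $\GL_k$ is transitive on matrices with a fixed column space, together with checking that Lemma \ref{lemma:endomorphisms}'s equivariance applies to $\delta_{X_0}$ at points of rank $\leq i$ rather than to all points. Both are routine once the rank factorization lemma (Lemma \ref{lemma:the wellknown remark}) is invoked.
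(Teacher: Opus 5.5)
Your proof is correct and follows essentially the same route as the paper. You decompose $f$ into the functions $\delta_Y^{f(Y)}$ and show that $J(\delta_Y)$ vanishes identically unless $\Col{Y}=\Col{X_0}$, by comparing ranks and column spaces. You then transport the hypothesis from $\delta_{X_0}$ to $\delta_Y$ via an invertible $T$ with $Y=X_0T$ and the right-multiplication equivariance behind Lemma \ref{lemma:endomorphisms}. Your explicit identity $J(\delta_Y)(X)=J(\delta_{X_0})(XT^{-1})$ is slightly more careful than the paper's appeal to Lemma \ref{lemma:orbitsupport}, because it shows directly that the restriction $\rk(X)\le i$ is preserved.
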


\begin{proof}
We first consider the behavior of $J$ on certain characteristic functions. If $Z \in F^{m \times k}$ is a matrix with $\Col{Z} = \Col{X_0}$, then there is an invertible matrix $T \in F^{k \times k}$, such that $ZT = X_0$, so $\delta_Z({X}) = \delta_{X_0}(XT)$. By Lemma \ref{lemma:orbitsupport}, $J_H(\delta_Z)(X) = \delta_Z(X)$ holds, for all $X$ with $\rk(X) \leq i$.

Next, let $Z \in F^{m \times k}$ be matrix with $\rk(Z) \geq n$ and $\Col{Z} \neq \Col{X_0}$. Note that this implies that $\Col{MX} \subseteq \Col{X_0} \neq \Col{Z}$, for all $X \in \F^{m\times k}$ and $M\in \F^{m\times m}$ with $\Col{M} = \Col{X_0}$. Since $\dim(\Col{MX}) \leq n \leq \dim(\Col{Z})$, this further implies $Z \neq MX$. As a consequence, $J(\delta_Z)(X) = 0$ holds for every $X\in \F^{m\times k}$.

Next, let $f \in \clodO_{\F^k,\algB}^{(m)}$ be an arbitrary function, whose support contains only matrices of rank $\geq n$. Hence, $f$ can be written as $f= \sum_{\rk(Z) \geq n} \delta_Z^{f(Z)}$, and therefore $J(f) = \sum_{\Col{Z} = \Col{X}} J(\delta_Z^{f(Z)})$. 

If we evaluate $J(f)$ at some matrix $X$ with $\Col{X} = \Col{X_0}$ we obtain, by the above, $J(f)(X) = \delta_X^{f(X)} = f(X)$. If we evaluate $J(f)(X)$ at some $X$ with $\rk(X) \leq i$ and $\Col{X} \neq \Col{X_0}$ we similarly get $J(f)(X) = 0$. This finishes the proof.
\end{proof}

While it is a relatively technical statement on its own, Lemma \ref{lemma:basicinterpolation} has several very useful consequences that we will use throughout the paper. If we, for instance, assume that the equation \eqref{eq:basicint1} holds for all matrices $X \in \F^{m \times k}$, we get the following consequence:

\begin{lemma} \label{lemma:thetainterpolation}
Let $X_0 \in \F^{m \times k}$ be a matrix with $\rk(X_0) = n$ and assume that $\delta_{X_0}$ is generated by its $n$-ary $(\F^k,\algR)$-minors. Then 
\begin{enumerate}
\item For every $H \leq \F^{m}$ with $\dim(H) = n$, there is a $(n,\F^k,\algB)$-UR operation $J_H$, such that for all $f$ with $\supp(f) \subseteq \{ X \in F^{m \times k} \mid \rk(X) \geq n \}$:
$$J_H(f)(X) = \begin{cases} f(X) &\text{ if } \Col{X} = H\\ 
0 &\text{ else.} \end{cases}$$
\item There exists a $(n,\F^k,\algB)$-UR operation $J_n$ such that for all $f$ with $\supp(f) \subseteq \{ X \mid \rk(X) \geq n \}$:
$$J_n(f)(X) = \begin{cases} f(X) &\text{ if } \rk(X) = n\\ 
0 &\text{ else.} \end{cases}$$
\end{enumerate}
\end{lemma}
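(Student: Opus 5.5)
Starting from the hypothesis, I use Lemma \ref{lemma:ngen} to obtain a representation of $\delta_{X_0}$ by $n$-ary minors. There are coefficients and matrices $M$ of rank $\leq n$ with $\delta_{X_0}(X) = \sum_M \alpha_M \delta_{X_0}(MX)$ for all $X$ (target the regular module $\algR$). The goal is to turn this into a formula of the shape \eqref{eq:basicint1}, valid for \emph{all} $X$ (so $i = m$ in the notation of Lemma \ref{lemma:basicinterpolation}). Then Lemma \ref{lemma:basicinterpolation} applies directly.

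\emph{First key step: pruning the sum.} A summand $\delta_{X_0}(MX)$ is nonzero only if $MX = X_0$. In that case $\Col{X_0}\subseteq \Col{M}$, and since $\rk(M)\leq n=\rk(X_0)$, this forces $\Col{M}=\Col{X_0}$. So every term with $\Col{M}\neq\Col{X_0}$ vanishes identically as a function of $X$ when applied to $\delta_{X_0}$, and I drop it. The remaining formula $J(f)(X)=\sum_{\Col{M}=\Col{X_0}}\alpha_M f(MX)$ still satisfies $J(\delta_{X_0})=\delta_{X_0}$ everywhere, and it is $(n,\F^k,\algR)$-UR. It also induces the same formula for every $\algR$-module $\algB$, as in the proof of Lemma \ref{lemma:orbitsupport}. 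Applying Lemma \ref{lemma:basicinterpolation} with $i=m$ gives, for $f$ supported on rank $\geq n$, that $J(f)(X)=f(X)$ if $\Col{X}=\Col{X_0}$ and $J(f)(X)=0$ otherwise. This is statement (1) for $H=\Col{X_0}$.

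\emph{Second key step: moving to an arbitrary $H$.} For $H$ of dimension $n$, I pick an invertible $P\in F^{m\times m}$ with $PH = \Col{X_0}$. Then I set $J_H(f)(X) = \sum_M \alpha_M f(P^{-1}MPX)$. Conjugation preserves rank, so $P^{-1}MP\in R_n$ and $J_H$ is $(n,\F^k,\algB)$-UR. Writing $g = f\circ P^{-1}$, we have $J_H(f)(X) = J(g)(PX)$. Moreover $g$ is still supported on rank $\geq n$. So $J_H(f)(X) = g(PX)=f(X)$ when $\Col{PX}=\Col{X_0}$, i.e.\ $\Col{X}=H$, and $J_H(f)(X)=0$ otherwise. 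This proves (1).

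\emph{Statement (2).} I take $J_n=\sum_{\dim H=n} J_H$, a finite sum of UR operations, which is UR by Lemma \ref{lemma:urobs}(3). For $X$ of rank $n$, exactly one $H$ equals $\Col{X}$, so $J_n(f)(X)=f(X)$. For any other $X$, every summand vanishes. The main obstacle I anticipate is the pruning step: I need to check that discarding terms leaves an exact identity for $\delta_{X_0}$ at \emph{all} $X$, including those of rank $>n$. This holds because each discarded term is identically zero on $\delta_{X_0}$.
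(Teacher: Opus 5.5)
Your proposal is correct and follows the paper's proof: the same representation of $\delta_{X_0}$ by rank-$\leq n$ minors, the same pruning to $\Col{M}=\Col{X_0}$, Lemma \ref{lemma:basicinterpolation} for $H=\Col{X_0}$, the conjugated formula with $P^{-1}MP$ (the paper's $SMS^{-1}$ with $S=P^{-1}$) for general $H$, and $J_n=\sum_{\dim H = n} J_H$ for (2). The only cosmetic difference is in the general-$H$ step. You transport the already-proved case via $g=f\circ P^{-1}$, whereas the paper rewrites the identity for $\delta_{X_0}$ as one for $\delta_{SX_0}$ and re-applies Lemma \ref{lemma:basicinterpolation}.
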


\begin{proof}
We first prove (1) for $H = \Col{X_0}$. By our assumption, we know that there are $\alpha_M \in \algR$ such that
\begin{equation} \label{eq:rankinterpol1} \delta_{X_0}(X) = \sum_{\rk(M)\leq n} \alpha_M \delta_{X_0}(MX) = \sum_{\Col{M}=\Col{X_0}} \alpha_M \delta_{X_0}(MX).
\end{equation}
For the second equation, note that for all matrices $M$ of rank $\leq n$, $\Col{M}\neq\Col{X_0}$ implies that $\delta_{X_0}(MX) = 0$ for all $X \in \F^{m\times k}$. So, without loss of generality, we can assume that $\alpha_M=0$ if $\Col{M}\neq\Col{X_0}$. Then, we obtain the statement by applying Lemma \ref{lemma:basicinterpolation} to  
\begin{equation} \label{eq:JHdefinition}
J_H(f) = \sum_{\Col{M}=\Col{X_0}} \alpha_M f(MX).
\end{equation}
Next, let $H \leq \F^{m}$ be an arbitrary subspace of dimension $n$. Then, there is an invertible matrix $S \in F^{m\times m}$ such that $\Col{SX_0} = H$. Note that $\delta_{SX_0}(X) = \delta_{X_0}(S^{-1}X)$, hence \eqref{eq:rankinterpol1} implies that 
$$\delta_{SX_0}(X) = \sum_{\Col{M}=\Col{X_0}} \alpha_M \delta_{X_0}(MS^{-1}X) = \sum_{\Col{M}=\Col{X_0}} \alpha_M \delta_{SX_0}(SMS^{-1}X).$$
Again, by Lemma \ref{lemma:basicinterpolation} we can see that $J_{H}(f)(X) = \sum_{\Col{M}=\Col{X_0}} \alpha_M f(SMS^{-1}X)$ has the desired properties.

For (2) we define $J_n(f) = \sum_{H \leq \F^m, \dim(H) = n} J_H(f)$. For every function $f$, whose support contains only matrices of rank $n$ or higher, note that by (1) $J_n(f)(X) = 0$, if $\rk(X) > n$, and $J_n(f)(X) = J_{\Col{X}}(f)(X) = f(X)$, if $\rk(X) = n$. So, we are done.
\end{proof}

We remark that the conditions of Lemma \ref{lemma:thetainterpolation} are in particular satisfied, if, for a given rank decomposition $X_0 = AU$ we have that $\delta_{X_0} \in \langle \delta_U \rangle_{\F^k,\algR}$ holds. Then, there are coefficients $\alpha_M \in \algR$ such that

\begin{equation} \label{eq:charfromrankdec}
\delta_{X_0}(X) = \sum_{M \in F^{n \times m }} \alpha_M \delta_{U}(MX) = \sum_{M \in F^{n \times m}} \alpha_M \delta_{X_0}(AMX).\end{equation}

As $\rk(AM) \leq n$, for every matrix $M \in F^{n \times m}$, equation \eqref{eq:charfromrankdec} clearly shows that $\delta_{X_0}$ is generated by its $n$-ary $(\F^k,\algR)$-minors.

We next prove that it is further enough to only consider matrices $X_0 \in \F^{(j+1)\times j}$ of full rank, for every $j\in [k]$, in order to prove that $\clodO_{\F^k,\algR}$ is uniformly generated by $k$-ary minors:

\begin{lemma} \label{lemma:reductiontosingleton}
Let $\F$ be a finite field, $\algR$ be a ring, $\algB$ be a $\algR$-module, and $k \geq 0$. Assume that, for every $j \in [k]$, there is a matrix $X_0 \in \F^{(j+1)\times j}$ of rank $\rk(X_0) = j$ such that $ \delta_{X_0}$ is generated by its $j$-ary $(\F^j,\algR)$-minors. Then
\begin{enumerate}
\item $\clodO_{\F^k,\algB}$ is uniformly generated by $k$-ary minors.
\item For every $i\in [k]_0$ and $m \in \N$ there is a $(i,\F^k,\algB)$-UR operation $I_i \colon \clodO_{\F^k,\algB}^{(m)} \to \clodO_{\F^k,\algB}^{(m)}$, with $I_i(f)(X) = f(X)$, for all $X \in \F^{m\times k}$ with $\rk(X) \leq i$.
\end{enumerate}
\end{lemma}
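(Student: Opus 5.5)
Statement (1) follows from statement (2): taking $i = k$ and $m = k+1$ shows that $\clodO^{(k+1)}_{\F^k,\algB}$ is uniformly generated by $k$-ary minors, since every $X \in \F^{(k+1)\times k}$ has rank at most $k$. Theorem \ref{theorem:unigen} (applied to $\clodC = \clodO_{\F^k,\algB}$, which is closed under constants) then gives (1). So the work is to prove (2), which I will do by induction on $i$, for fixed $m$.

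For $i=0$, the only matrix of rank $0$ is the zero matrix. I take $I_0(f)(X) = f(0\cdot X)$, which is a $0$-ary minor and interpolates $f$ at $X=0$.

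For the step $i-1 \to i$ (with $i \le k$), I set
\[
I_i(f) = I_{i-1}(f) + J_i\bigl(f - I_{i-1}(f)\bigr).
\]
Here $J_i$ is the operation from Lemma \ref{lemma:thetainterpolation} (2), with $n = i$, acting on $m$-ary functions. The function $g = f - I_{i-1}(f)$ vanishes on all matrices of rank $\le i-1$, so its support lies in $\{X \mid \rk(X)\ge i\}$. Lemma \ref{lemma:thetainterpolation} (2) then gives $J_i(g)(X) = g(X)$ when $\rk(X)=i$, and $J_i(g)(X) = 0$ when $\rk X < i$. Hence $I_i(f)(X) = f(X)$ for all $\rk X \le i$. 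Moreover, $I_i$ is $(i,\F^k,\algB)$-UR by Lemma \ref{lemma:urobs} (2), (3), (4): $J_i$ is $i$-UR, $I_{i-1}$ is $(i-1)$-UR, and sums and differences are module term operations.

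The main obstacle is supplying the hypothesis of Lemma \ref{lemma:thetainterpolation}. We need some $X_0 \in \F^{m\times k}$ of rank $i$ with $\delta_{X_0}$ generated by its $i$-ary $(\F^k,\algR)$-minors. The assumption only provides $Y_0 \in \F^{(i+1)\times i}$ of rank $i$ with $\delta_{Y_0}$ generated by $i$-ary $(\F^i,\algR)$-minors, so two changes are needed.

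The first is passing from $\F^i$ to $\F^k$. I would pad $Y_0$ with zero columns to $Y_0' = [Y_0\ 0] \in \F^{(i+1)\times k}$. Then $\delta_{Y_0'}(X) = \delta_{Y_0}(XP)\cdot \delta_0(XQ)$, where $P,Q$ select the first $i$ and last $k-i$ columns. To avoid products, I would instead argue with the interpolation formula directly. Write $\delta_{Y_0}(Y) = \sum_M \alpha_M\delta_{Y_0}(MY)$ with $\rk M\le i$. For $X=[Y\ W]$, the identity $\delta_{Y_0'}(X) = \sum_M \alpha_M \delta_{Y_0'}(MX)$ holds: if $W \neq 0$, the right side needs care, since $MW$ may be $0$ even though $W \neq 0$. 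I expect to handle this by restricting to $M$ with $\Col{M}=\Col{Y_0}$, as in Lemma \ref{lemma:thetainterpolation}, and by using the $\Aut$-orbit argument of Lemma \ref{lemma:orbitsupport}. Failing that, I would apply Lemma \ref{lemma:thetainterpolation} within $\F^i$ first and then lift via column endomorphisms $X\mapsto XT$.

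The second change is the number of rows. For $m\ge i+1$, I embed $Y_0'$ by zero rows and use the remark after \eqref{eq:charfromrankdec}. For $m\le i$, no rank-$i$ matrix has support issues beyond full-row-rank cases, where the identity minor already has rank $\le i$ and the claim is trivial.
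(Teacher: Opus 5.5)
There is a genuine gap, and it sits exactly at the step you flag as the main obstacle. Your reduction of (1) to (2) via Theorem~\ref{theorem:unigen}, the base case, and the update $I_i(f)=I_{i-1}(f)+J_i(f-I_{i-1}(f))$ are all fine. What fails is the supply of $J_i$.

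For $i<k$, the input you want for Lemma~\ref{lemma:thetainterpolation} does not exist. Take $k=m=2$, $i=1$ and $X_0=\begin{bmatrix}1&0\\0&0\end{bmatrix}$.
\begin{itemize}
\item Its unary minors $z\mapsto\delta_{X_0}(\va z)$ are either $0$ or $\delta_{c(1,0)}$ with $c\in\F^*$.
\item Hence every binary function they generate has the form $X\mapsto\sum_{\vr}\gamma_{\vr}\,\delta_{(1,0)}(\vr^TX)$.
\item At an invertible $X$, the equation $\vr^TX=(1,0)$ has the single solution $\vr^T=(1,0)X^{-1}$, while $\delta_{X_0}(X)=0$. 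Letting $X$ range over all invertible matrices gives $\gamma_{\vr}=0$ for every $\vr\neq 0$.
\item So the combination also vanishes at $X_0$, a contradiction. The same computation works for every rank-one $X_0\in\F^{2\times 2}$.
\end{itemize}

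Since you only use $J_i$ on matrices of rank $\le i$, you could retreat to Lemma~\ref{lemma:basicinterpolation}. There, zero columns are indeed harmless: if $\rk[Y\ W]\le i$ and $MY=Y_0$, then $W=YC$, and $MW=Y_0C$ vanishes iff $W=0$. Additional rows are not harmless. For $m\ge i+2$ you would need to interpolate the characteristic function of a rank-$i$ matrix in $\F^{m\times i}$ by $i$-ary $(\F^i,\algR)$-minors. That is an $m$-row version of the hypothesis, which is not assumed. Embedding by zero rows and invoking the remark after \eqref{eq:charfromrankdec} is circular, because that remark presupposes $\delta_{X_0}\in\langle\delta_U\rangle$ for the padded matrix. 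Your proof of (1) runs with $m=k+1$ through all $i<k$, so this is already needed for (1) whenever $k\ge 2$.

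The missing idea is to induct on the dimension rather than on the rank, so that a characteristic function is only ever interpolated at full rank. In the step $k-1\to k$, with $l=k+1$:
\begin{itemize}
\item The hypothesis is used only for $X_0\in\F^{(k+1)\times k}$ of rank $k$, which is exactly its given shape. Lemma~\ref{lemma:thetainterpolation}(2) with $n=k$ then yields $J_k$.
\item All matrices of rank $\le k-1$ are handled at once. The induction hypothesis, which covers every arity via Theorem~\ref{theorem:unigen}, gives a formula $J(f)(Y)=\sum_{\rk M\le k-1}\alpha_M f(MY)$ valid for \emph{every} $f$ on $\F^{l\times(k-1)}$.
\item Because it holds for every $f$, it applies to $g_U(Y)=g(YU)$. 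This gives $J(g)(X)=g(X)$ for every $X=YU$ of rank $\le k-1$.
\item Then $g=J(g)+J_k(g-J(g))$.
\end{itemize}
Part (2) follows by applying (1) to $\F^i$ at arity $m$ and lifting along $X=YU$ in the same way. Your rank induction can be salvaged along these lines: carry out step $i$ over $\F^i$ at arity $i+1$, where no padding arises, raise the arity with Theorem~\ref{theorem:unigen}, and then lift to $\F^k$. That, however, is the paper's argument reorganized.
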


\begin{proof}
We first prove (1) by induction on $k=0,1,2,3,\ldots$ For the base step $k=0$, the statement trivially holds by $f(X) = f(0)$. (Also, $k=1$ can be observed by a careful reading of \cite{fioravanti-clonoids}).

So let us consider the induction step $k-1\to k$. For short, let us write $l = k+1$. Our goal is to prove that $\clodO_{F^{k},B}^{(l)}$ is uniformly generated by $k$-ary minors. By the induction assumption, there is a $(k-1,\F^{k-1},\algB)$-UR operation 

\[J(f)(Y) = \sum_{\substack{M \in \F^{l\times l} \\
\rk(M) \leq k-1}} \alpha_M f(MY) \]

such that $J(f)(Y) = f(Y)$, for all $f\in \clodO_{F^{k-1},B}^{(l)}$ and $Y \in \F^{l\times(k-1)}$. Note that this representation formula also induces a $(k-1,\F^{k},\algB)$-UR operation on $g \in \clodO_{F^k,B}^{(l)}$. Next, let $X \in \F^{l \times k}$ be a matrix of rank $\leq k-1$; so it can be decomposed into $X = YU$, for $Y\in \F^{l \times (k-1)}$ and $U\in \F^{(k-1) \times k}$. If, for a function $g \in \clodO_{F^k,B}^{(l)}$ we define $g_U \colon F^{l \times (k-1)} \rightarrow B$ by $g_U(Y) = g(YU)$, then we get 

$$J(g)(X) =  J(g)(YU) = J(g_U)(Y) = g_U(Y) = g(YU) = g(X).$$

Thus, $J(g)$ agrees with $g$ on matrices of rank $\leq k-1$. Next, recall that we assume that there is a $X_0 \in \F^{l\times k}$ such that $\delta_{X_0}$ is generated by $k$-ary minors. Lemma \ref{lemma:thetainterpolation} (2) implies, that there is an operation $J_k$ on $\clodO_{F^k,B}^{(l)}$ that can be represented by $k$-ary minors, such that $J_k(g) = g$, for all functions $g \in \clodO_{F^k,B}^{(l)}$ with $\supp(g) \subseteq \{X \mid \rk(X) = k\}$. It follows that, for every $g \in \clodO_{F^k,B}^{(l)}$:

$$g = J(g) + J_k(g-J(g)) = J(g) + J_k(g) - J_kJ(g).$$

Since both $J_k$ and $J$ can be represented by $k$-ary minors, we are done.

We next prove (2). For $m \leq i$, the statement is trivially true, by setting $I_i(f) = f$. So let us assume that $m>i$. By (1), we know that $\clodO_{\F^i,\algB}^{(m)}$ is uniformly generated by $i$-ary minors, so there exists a representation formula 
\[I_i(f)(Y) = \sum_{\substack{M \in \F^{m \times m} \\
\rk(M) \leq i}} \alpha_M f(MY), \]
such that $I_i(f)(Y) = f(Y)$ for all $Y \in \F^{m \times i}$. Naturally $I_i$ also induces an operation on $\clodO_{\F^k,\algB}^{(m)}$ that is representable by $i$-ary minors.

Every matrix $X \in \F^{m \times k}$ of rank at most $i$ can be written as $X = YU$, for $Y \in \F^{m \times i}, U \in \F^{i \times k}$. If we again, for every $g \in \clodO_{\F^k,\algB}^{(m)}$ define $g_U \colon F^{m \times i} \rightarrow B$ by $g_U(Y) = g(YU)$, we get 

$$I_i(g)(X) =  I_i(g)(YU) = I_i(g_U)(Y) = g_U(Y) = g(YU) = f(X).$$

Thus $I_i$ indeed satisfies $I_i(f)(X) = f(X)$, for all $X \in \F^{m\times k}$ with $\rk(X) \leq i$.
\end{proof}




\subsection{Generating characteristic functions} \label{sect:charfun}

By Lemma \ref{lemma:reductiontosingleton}, all there is left to prove our main result, is to show that for every $k\in \N$, there a matrix $X_0 \in \F^{(k+1)\times k}$ of rank $k$ such that $\delta_{X_0} \in \langle \delta_{\Id_k} \rangle_{\F^k,\algR}$, whenever $|F|$ is invertible in $\algR$.


So, throughout this subsection, we fix the following notation:
\begin{itemize}
    \item $\F$ denotes a fixed finite field
    \item $k \in \N$ is a fixed natural number
    \item $\algR$ is a regular module, in which $|F|$ is invertible.
    \item $X_0\in \F^{(k+1)\times k}$ is a fixed matrix of rank $k$, e.g.
    $$X_0 = \begin{bmatrix} \Id_k\\ 0
 \end{bmatrix} \in \F^{(k+1)\times k}.$$
\end{itemize}

Our goal is to show that there are coefficients $\alpha_M \in R$ for every $M \in F^{(k+1)\times k}$ of rank $\rk(M) = k$ such that $\delta_{X_0}(X) = \sum_{\rk(M) = k} \alpha_M \delta_{\Id_k}(MX)$. (Note that we only need to consider coefficient matrices $M \in \F^{k \times (k+1)}$ of full rank $k$, as otherwise $\delta_{\Id_k}(MX)$ is equal to $0$.) The function $X \mapsto \delta_{\Id_k}(MX)$ evaluates to $1$ only on the solution set of the equation $MX = \Id_k$. This motivates the following definition and lemma:

\begin{definition}
For every matrix $X \in \F^{(k+1) \times k}$ with $\rk(X) = k$ and for every vector $\va \in \F^{k+1} \setminus \Col{X}$, we define $\theta(X,\va) = \{ X + \va \vu^T \mid \vu \in \F^k \}$. We call every set of the form $\theta(X,\va)$ a \emph{$\theta$-space}.
\end{definition}

\begin{lemma} \label{lemma:thetaspaces} 
\begin{enumerate}
\item\label{ite:lemma:thetaspaces1} Let $M \in \F^{k\times (k+1)}$ with $\rk(M) = k$. Then there is a $\theta$-space $ V_M$ such that $MX = \Id_k$ if and only if $X \in V_M$. 
\item\label{ite:lemma:thetaspaces2} Conversely, for every $\theta$-space $V$, there is a matrix $M_V \in \F^{k\times (k+1)}$ with $\rk(M_V) = k$ such that $M_V X = \Id_k$ if and only if $X \in V$.
\end{enumerate}
\end{lemma}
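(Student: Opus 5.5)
The plan is to solve the linear system $MX = \Id_k$ explicitly and identify its solution set with a $\theta$-space via kernels and rank factorizations. Throughout, let $M \in \F^{k\times(k+1)}$ have rank $k$. Then $\ker M \le \F^{k+1}$ is one-dimensional, spanned by some $\va \neq 0$.

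For (\ref{ite:lemma:thetaspaces1}), I will first produce one solution. Since $M$ has full row rank, it has a right inverse $X_1 \in \F^{(k+1)\times k}$ with $MX_1 = \Id_k$. This $X_1$ has rank $k$, because $\rk(\Id_k) \le \rk(X_1) \le k$.

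The full solution set of $MX = \Id_k$ is then $X_1 + \{Z \mid MZ = 0\}$. Each column of such a $Z$ lies in $\ker M = \langle \va\rangle$, so $Z = \va\vu^T$ for some $\vu \in \F^k$, and conversely every matrix of this form satisfies $MZ=0$. It remains to check that $\va \notin \Col{X_1}$. Suppose instead $\va = X_1\vb$ for some $\vb$. Then $0 = M\va = MX_1\vb = \vb$, which forces $\va = 0$, a contradiction. Hence the solution set is exactly $V_M = \theta(X_1,\va)$.

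For (\ref{ite:lemma:thetaspaces2}), let $V = \theta(X,\va)$. Since $\rk(X) = k$ and $\va \notin \Col{X}$, the $(k+1)\times(k+1)$ matrix $[X \mid \va]$ is invertible. I will take $M_V$ to be its first $k$ rows of the inverse, i.e.\ $[X\mid \va]^{-1} = \begin{bmatrix} M_V \\ \vec{c}^T\end{bmatrix}$. By construction $M_V X = \Id_k$ and $M_V\va = 0$, and $\rk(M_V) = k$ because its rows are rows of an invertible matrix. Applying part (\ref{ite:lemma:thetaspaces1}) with the particular solution $X_1 = X$ and with $\ker M_V = \langle \va\rangle$ (a dimension count) shows that the solution set of $M_V Y = \Id_k$ is precisely $\theta(X,\va) = V$.

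There is no real obstacle in either part. The only points needing care are verifying $\va\notin\Col{X_1}$ in (\ref{ite:lemma:thetaspaces1}) and the invertibility of $[X\mid\va]$ in (\ref{ite:lemma:thetaspaces2}); both are immediate.
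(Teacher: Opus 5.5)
Your proof is correct and follows the paper's argument. Part (1) is the same: a right inverse $X_1$ plus the one-dimensional kernel $\langle\va\rangle$, with $\va\notin\Col{X_1}$ because $\va = X_1\vb$ gives $\vb = MX_1\vb = M\va = 0$; you additionally check $\rk(X_1)=k$, which the paper leaves implicit. In part (2) you take $M_V$ to be the top $k$ rows of $[X\mid\va]^{-1}$, whereas the paper corrects a left inverse $M_0$ of $X$ by a term $\vm_V\vn^T$, with $\vn$ spanning $\ker(X^T)$, so that $M_V\va=0$; this is only a minor variation, and your construction gives $M_VX=\Id_k$, $M_V\va=0$ and $\rk(M_V)=k$ all at once.
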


\begin{proof}
We start with (\ref{ite:lemma:thetaspaces1}). 
Since $M$ has full rank, $\vx \mapsto M \vx$ is a surjective map from $\F^{k+1}$ to $\F^k$. Hence, there is a matrix $Y$ such that $MY = \Id_k$. Moreover, the kernel $\ker (M)$ has dimension 1, and is therefore generated by a single vector $\va \neq 0$. It follows that the solution space of the homogeneous equation $MY = 0$ consists of all matrices of the form $\va \vu^T$, for $\vu \in F^k$. 
Thus $ V_M = \theta(Y,\va)$ is the solution space to the system of linear equations $MX = \Id_k$. Furthermore, assuming $\va \in\Col{Y}$, that is, $\va = Y\vb$ for some $\vb \neq 0$, would imply that $0 = M\va = MY \vb = \Id_k\vb = \vb \neq 0$, a contradiction. Thus $V_M$ is indeed a $\theta$-space.

For (\ref{ite:lemma:thetaspaces2}), let $Y \in \F^{(k+1) \times k}$ and $\va \in \F^{(k+1)}\setminus\Col{Y}$ be such that $V = \theta(Y,\va)$. Let $\vn \in \F^{k+1}$ be a generator of $\ker(Y^T)$. Then, as in (\ref{ite:lemma:thetaspaces1}) we can see that there is a matrix $M_0 \in \F^{k \times (k+1)}$ such that $M Y = \Id_k$ holds if and only if $M = M_0 + \vm \vn^T$, for some $\vm \in \F^{k}$. Since $\va \notin \Col{Y}$, we have $ \vn^T\va \neq 0$. Then, let us define $\vm_V = - (\vn^T\va)^{-1}M_0\va$, and $M_V =  M_0 + \vm_V \vn^T$. By definition, $M_V \va = 0$, thus $M_V$ is indeed the matrix we were looking for.
\end{proof}

We are going to use the following notation:

\begin{definition} \label{definition:deltaV}
For a $\theta$-space $V$, let $\delta_V \colon \F^{k+1}\times k$ denote the function given by $\delta_V(X) = 1$ if $X \in V$ and $\delta_V(X) = 0$ else. Note that, by Lemma \ref{lemma:thetaspaces}
$\delta_{V}(X) = \delta_{\Id_k}(MX)$, for some matrix $M \in \F^{k\times (k+1)}$ of rank $k$.
\end{definition}

By definition, every $\theta$-space is an affine subspace of $\F^{(k+1)\times k}$ of dimension $k$. However, in a different sense, $\theta$-spaces 
exhibit some `line-like' behavior, by the following observations:

\begin{lemma} \label{lem:thetaspaces} 
Let $X_1, X_2\in \F^{(k+1) \times k}$, and let $\va_1, \va_2\in \F^{k+1}$ with $\va_1\notin \Col{X_1}$ and $\va_2\notin \Col{X_2}$.
\begin{enumerate}
\item \label{ite:lem:thetaspaces1} Either $|\theta(X_1,\va_1) \cap \theta(X_2,\va_2)| \leq 1$, or $\theta(X_1,\va_1) = \theta(X_2,\va_2)$.
\item \label{ite:lem:thetaspaces2} For $X_1 \neq X_2$ there is at most one $\theta$-space containing both $X_1$ and $X_2$.
\item \label{ite:lem:thetaspaces3} All elements of a $\theta$-space have rank $k$.
\end{enumerate}
\end{lemma}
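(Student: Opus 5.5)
The plan is to prove the three items directly from the description of $\theta$-spaces as solution sets of linear systems, given by Lemma~\ref{lemma:thetaspaces}. I will treat item~(\ref{ite:lem:thetaspaces3}) first, because it is the simplest and is used in the others.

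For item~(\ref{ite:lem:thetaspaces3}), let $X+\va\vu^T\in\theta(X,\va)$. By Lemma~\ref{lemma:thetaspaces}~(\ref{ite:lemma:thetaspaces2}) there is $M_V$ with $M_V(X+\va\vu^T)=\Id_k$. Hence $\rk(X+\va\vu^T)\geq\rk(\Id_k)=k$. Since the matrix has only $k$ columns, its rank is exactly $k$.

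For item~(\ref{ite:lem:thetaspaces1}), suppose $Y_1\neq Y_2$ both lie in $\theta(X_1,\va_1)\cap\theta(X_2,\va_2)$.
\begin{itemize}
\item Since $\theta(X_i,\va_i)=Y_i+\{\va_i\vu^T\}$ for any of its elements, we may rewrite $\theta(X_i,\va_i)=\theta(Y_1,\va_i)$ for $i=1,2$. This uses that $Y_1$ has rank $k$ and that $\va_i\notin\Col{Y_1}$. The latter holds because $M_{V_i}Y_1=\Id_k$ and $M_{V_i}\va_i=0$, exactly as in the proof of Lemma~\ref{lemma:thetaspaces}.
\item Now $Y_2-Y_1$ is a nonzero matrix of the form $\va_1\vu_1^T=\va_2\vu_2^T$ with $\vu_1,\vu_2\neq 0$.
\item A nonzero rank-one matrix determines its column space, so $\va_2=c\va_1$ for some $c\neq 0$.
\item Therefore $\theta(Y_1,\va_2)=\{Y_1+\va_1(c\vu)^T\}=\theta(Y_1,\va_1)$, and the two $\theta$-spaces coincide.
\end{itemize}
An alternative route for this item is via the matrices $M_V$: two $\theta$-spaces sharing two points $Y_1,Y_2$ have kernels $\ker(M_{V_i})$ both containing the column space of $Y_2-Y_1$. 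Each kernel is one-dimensional, so both equal $\langle\va\rangle$. Either route suffices, and I will use the first.

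Item~(\ref{ite:lem:thetaspaces2}) is then immediate from item~(\ref{ite:lem:thetaspaces1}). If two $\theta$-spaces both contain $X_1\neq X_2$, their intersection has at least two elements, so they are equal.

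The only mildly delicate step is checking $\va_i\notin\Col{Y_1}$ when the $\theta$-space is re-based at $Y_1$. This follows from the $M_V$ characterization, or directly: if $\va\in\Col{X+\va\vu^T}$, then the column space of $X+\va\vu^T$ contains both $\va$ and $\Col{X}$. That space has dimension $k+1$, contradicting the fact that the matrix has only $k$ columns.
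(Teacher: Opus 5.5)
Your proof is correct. It differs from the paper's in items~(2) and~(3).

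\textbf{Item (1).} This is essentially the paper's argument. You re-base both $\theta$-spaces at a common point and note that a nonzero rank-one difference $\va_1\vu_1^T=\va_2\vu_2^T$ forces $\langle\va_1\rangle=\langle\va_2\rangle$. The paper starts from a single common point and splits into the collinear and non-collinear cases; you argue contrapositively from two common points. The underlying fact is the same.

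\textbf{Item (2).} You deduce this immediately from item~(1). The paper instead gives a separate argument: $\rk(X_1-X_2)=1$ determines $\va$ up to a scalar. Your deduction is more economical, and the paper's argument tacitly needs the re-basing observation from item~(1) anyway.

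\textbf{Item (3).} You obtain full rank from the left inverse $M_V$ of Lemma~\ref{lemma:thetaspaces}~(\ref{ite:lemma:thetaspaces2}), since $M_V(X+\va\vu^T)=\Id_k$. The paper instead checks directly that the columns of $X+\va\vu^T$ are independent. If $(X+\va\vu^T)\vb=0$, then either $\vu^T\vb\neq 0$, which would put $\va$ in $\Col{X}$, or $X\vb=0$, which forces $\vb=0$. Your route is shorter and not circular, because the construction of $M_V$ only uses $\rk(X)=k$ and $\va\notin\Col{X}$. The paper's route is self-contained and independent of the preceding lemma.

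\textbf{A small slip.} In your first bullet you mean $\theta(X_i,\va_i)=Y+\{\va_i\vu^T\mid \vu\in\F^k\}$ for any element $Y$ of the space, not $Y_i$. Also, the check $\va_i\notin\Col{Y_1}$ is only needed so that the notation $\theta(Y_1,\va_i)$ denotes a legitimate $\theta$-space. The set-level argument does not depend on it.
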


\begin{proof}
For (\ref{ite:lem:thetaspaces1}), note that $\theta(X,\va) = \theta(Y,\va)$ holds for every $Y \in \theta(X,\va)$. Thus, 
if there exists $X$ in $\theta(X_1,\va_1) \cap \theta(X_2,\va_2)$, then without loss of generality we can assume $X = X_1 = X_2$. It is then easy to see that $\theta(X_1,\va_1) = \theta(X_2,\va_2)$ if $\va_1$ and $\va_2$ are collinear and $\theta(X_1,\va_1) \cap \theta(X_2,\va_2) = \{X\}$ else. 

For (\ref{ite:lem:thetaspaces2}), let $\va\in \F^{k+1}$. Then
$X_2 \in \theta(X_1,\va)$ iff $X_1-X_2 = \va \vu^T$ for some $\vu \neq 0$. 
In particular this can only happen if $\rk(X_1-X_2) = 1$. Therefore $\va \in \Col{X_1-X_2}$ is, up to multiplication with a scalar, uniquely determined by $X_1-X_2$.

To see (\ref{ite:lem:thetaspaces3}), let us consider a space 
$\theta(X,\va)$, and let $\vu\in \F^{k}$. We show that the columns of $X + \va \vu^T$ are linearly independent. 
To this end, let us consider a vector $\vb \in \F^{k}$ such that $(X + \va \vu^T)\vb = 0$. 
If $\vu^T \vb \neq 0$, then $\va = -X(\vu^T \vb)^{-1} \vb $, which contradicts the assumption that $\va\notin \Col{X}$. 
This leaves us with the case that $\vu^T \vb = 0$ and hence $X\vb = 0$. Since $\rk(X) = k$, this implies $\vb = 0$, and we are done.
\end{proof}

In the following definitions we introduce $\theta$-spaces of a specific form, that will play a crucial role in the proof of the main result of the section.

\begin{definition} \label{definition:type}
Let $n \leq k$, and let $A \in \F^{(k+1) \times n}$, $U \in \F^{n \times k}$ be matrices of rank $n$. Then, for $0 \leq j \leq n$, let us set $X_j := X_0 + \sum_{i=1}^j \va_i \vu_i^T$, 
where $\va_1,\ldots,\va_n \in \F^{k+1}$ denote the columns of $A$, and $\vu_1^T,\ldots,\vu_n^T \in \F^{k}$ denote the rows of $U$. 
We call the rank factorization $AU = X_n-X_0$ \emph{($n$-)admissible} if
\begin{enumerate}
\item $\va_{i+1} \notin 
\Col{X_i}$ for all $0 \leq i \leq n-1$;
\item $\va_{i+2} \in \Col{X_0} \cap \Col{X_1} \cap \cdots \cap \Col{X_i}$, for all $0 \leq i \leq n-2$.
\end{enumerate}
We say that $X \in \F^{(k+1)\times k}$ is of \emph{type $n$}, if $X = X_0 + AU$ for some $n$-admissible product $AU$, and we call $X= X_0 +AU$ an \emph{($n$-)admissible representation of $X$}. We write $\mathfrak T_n$ for the set of matrices of type $n$, and $\mathfrak T = \mathfrak T_0 \cup \mathfrak T_1 \cup \ldots \cup \mathfrak T_k$.
\end{definition}

\begin{definition} \label{definition:setoftype}
We say that a $\theta$-space $\theta(X,\va) = \{ X + \va \vu^T \mid \vu \in \F^k \}$ is of \emph{type $n$} if $X\in \mathfrak T_n$ with admissible representation $X = X_0 + AU$, and $\va \in \Col{X_0} \cap \Col{X_1}\ldots \cap \Col{X_{n-1}} \setminus \Col{X_n}$. 
We denote the set of all $\theta$-spaces of type $n$ by $\Theta_n$, and we let $\Theta = \Theta_0 \cup \Theta_1 \cup \cdots \cup \Theta_k$.
\end{definition}

The type of a matrix $X$ or the $\theta$-space $\theta(X,\va)$ does not depend on the particular choice of admissible representation $AU = X - X_0$. The proof requires the following lemma:

\begin{lemma} \label{lemma:representation}
Let $X \in \mathfrak T_n$, and 
let $X = X_0 + AU = X_0 + \tilde A \tilde U$ 
be two admissible representations of it. Furthermore, let $X_i = X_0 + \sum_{j=1}^i \va_j\vu_j^T$ and $\tilde X_i = X_0 + \sum_{j=1}^i \tilde \va_j \tilde \vu_j^T$ for $i \in[n]_0$. Then:
\begin{enumerate}
\item\label{ite:lemma:representation2} $\Col{X_0} \cap \Col{X_1} \cap \ldots \cap \Col{X_i} = \Col{\tilde X_0} \cap \Col{\tilde X_1} \cap \ldots \cap \Col{\tilde X_i}$ for 
every $i\in[n]_0$.
\item\label{ite:lemma:representation1} there is an lower triangular matrix $T \in \F^{n\times n}$ with $A = \tilde A T^{-1}$ and $U = T \tilde U$. 
\end{enumerate}
\end{lemma}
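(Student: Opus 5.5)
The plan is to prove both parts simultaneously by induction on $i$. The key is to identify the subspaces $W_i := \Col{X_0}\cap\cdots\cap\Col{X_i}$ intrinsically from $X$, so that they cannot depend on the chosen representation.

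\emph{Step 1: the column spaces drop by one at each step.} Since $\va_{i+1}\notin\Col{X_i}$, the matrix $X_{i+1}=X_i+\va_{i+1}\vu_{i+1}^T$ lies in the $\theta$-space $\theta(X_i,\va_{i+1})$. By Lemma~\ref{lem:thetaspaces}(\ref{ite:lem:thetaspaces3}), every $X_i$ therefore has rank $k$, so each $\Col{X_i}$ is a hyperplane of $\F^{k+1}$. Write $\Col{X_i}=\vn_i^{\bot}$. For $i\le n-2$, condition (2) gives $\va_{i+2}\in W_i$, while condition (1) gives $\va_{i+2}\notin\Col{X_{i+1}}$. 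Hence $\va_{i+2}\notin W_{i+1}$, and so $W_{i+1}\subsetneq W_i$. The codimensions therefore grow by exactly one each time, giving $\dim W_i = k-i$ and $W_i^{\bot}=\langle \vn_0,\dots,\vn_i\rangle$ with the $\vn_j$ linearly independent.

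\emph{Step 2: an intrinsic description of the $W_i$.} I would show that $W_i^\bot$ is determined by $X$ alone, via the flag of left kernels of the difference $D:=X-X_0=AU$. Since $\vn_j^T X_j=0$, we get $\vn_j^T A U=\vn_j^T(X-X_j)-\vn_j^TX_0=\vn_j^T\sum_{l>j}\va_l\vu_l^T-\vn_j^TX_0$. Because $\va_l\in W_{l-2}\subseteq\Col{X_j}$ for $l\ge j+2$, all of those terms vanish. What remains is $\vn_j^T A U=(\vn_j^T\va_{j+1})\vu_{j+1}^T-\vn_j^TX_0$. So the relation is triangular: the covector $\vn_j$ is tied to $\vu_{j+1}$ and to $\vn_j^TX_0$.

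Inductively this yields $\vn_{j}\in$ the set of $\vy$ with $\vy^T X \in \langle \vu_1,\dots,\vu_j\rangle^T$ modulo lower terms. Concretely, I will aim to prove that
\[
W_i^{\bot} = \{\vy \in \F^{k+1} \mid \vy^T X_0 \in \langle \vy'^T X_0 : \dots\rangle\}
\]
is cut out by linear conditions that involve only $X_0$ and the products $\vy^T D$. I expect this step to be the main obstacle: writing down a canonical characterization of $W_i$ in terms of $X$ and $X_0$ alone, for example as the largest subspace of $\Col{X_0}$ of dimension $k-i$ satisfying the pattern above. If a clean formula is elusive, I would instead induct directly, which should be enough.

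\emph{Step 3: the direct induction.} Assume $W_j=\tilde W_j$ for all $j<i$, with $\va_1,\tilde\va_1$ spanning the same complement directions. Compare $D=AU=\tilde A\tilde U$. By Lemma~\ref{lemma:the wellknown remark} there is an invertible $T$ with $A=\tilde A T^{-1}$ and $U=T\tilde U$. Both $A$ and $\tilde A$ satisfy $\va_l,\tilde\va_l\in W_{l-2}$ and $\va_{l-1}\notin W_{l-2}$ (using Step 1). The filtration $\langle \va_l,\dots,\va_n\rangle = A\,\F^n\cap W_{l-2}$ is therefore intrinsic to $\Col{A}=\Col{D}$ together with the $W$'s already known to agree. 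This forces $\langle\va_l,\dots,\va_n\rangle=\langle\tilde\va_l,\dots,\tilde\va_n\rangle$ for every $l$, which is exactly the statement that $T$ is lower triangular (once the agreement of $W_{l-2}$ is inductively available). Lower triangularity in turn gives $\langle\vu_1,\dots,\vu_i\rangle=\langle\tilde\vu_1,\dots,\tilde\vu_i\rangle$. Then $X_i-\tilde X_i$ is a combination of the $\va_l\vu^T$ with $l>i$ correction terms lying in $W_{i-1}$, and from this I would deduce $\Col{X_i}\cap W_{i-1}=\Col{\tilde X_i}\cap W_{i-1}$. That gives (\ref{ite:lemma:representation2}) at step $i$ and closes the induction, with (\ref{ite:lemma:representation1}) obtained at the end.
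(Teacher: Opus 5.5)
You have a genuine gap, in the last sentence of Step 3, which carries the whole content of part (1) at stage $i$.

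The rest of your plan holds up. Step 1 correctly gives $\rk(X_i)=k$ and $\dim W_i=k-i$ for $i\le n-1$. The identity $\Col{D}\cap W_{l-2}=\langle\va_l,\dots,\va_n\rangle$ is true, though you only assert it. It follows because $\va_m\in W_{m-2}\setminus W_{m-1}$ for every $m$ (with $W_{-1}=\F^{k+1}$), so a combination $\sum_m c_m\va_m$ lying in $W_{l-2}$ must have $c_m=0$ for all $m<l$: look at the smallest $m$ with $c_m\neq 0$. Agreement of this flag at level $l=i+1$ does then yield $\langle\vu_1,\dots,\vu_i\rangle=\langle\tilde\vu_1,\dots,\tilde\vu_i\rangle$.

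The failing step is the deduction of $\Col{X_i}\cap W_{i-1}=\Col{\tilde X_i}\cap W_{i-1}$ from the fact that the columns of $X_i-\tilde X_i$ lie in $W_{i-1}$. That implication is false in general. Take $k=2$, standard basis vectors $e_1,e_2,e_3$ of $\F^3$, $W=\langle e_1,e_2\rangle$, $X=[e_1\ e_3]$ and $\tilde X=[e_2\ e_3]$. The difference $[e_1-e_2\ \ 0]$ has its columns in $W$, yet $\Col{X}\cap W=\langle e_1\rangle\neq\langle e_2\rangle=\Col{\tilde X}\cap W$. What decides the intersection is the set of $\vb$ with $X_i\vb\in W_{i-1}$, which is row-side information. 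Your Step 2 was meant to supply an intrinsic description of $W_i$, but it was left unfinished.

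The missing ingredient is short given what you already have. If $\vb\in\langle\vu_1,\dots,\vu_i\rangle^{\bot}$, then $X_j\vb=X_0\vb$ for all $j\le i$, so $X_0\vb\in W_i$. Since $X_0$ is injective, the $\vu_j$ are independent, and $\dim W_i=k-i$ by Step 1, you get
\[
W_i=X_0\bigl(\langle\vu_1,\dots,\vu_i\rangle^{\bot}\bigr)\qquad(0\le i\le n-1),
\]
and the same formula for $\tilde W_i$ with the $\tilde\vu_j$. Hence the $\vu$-span agreement gives $W_i=\tilde W_i$. The case $i=n$ is trivial because $X_n=\tilde X_n=X$. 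This closes your alternating induction: from $W_{i-1}$ to the column flag, then to the $\vu$-flag, then to $W_i$.

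So repaired, your argument is a genuinely different route from the paper's. The paper works with the normal vectors $\vn_j$ and the identity $\vn_j^TX=(\vn_j^T\va_{j+1})\vu_{j+1}^T$, which is essentially your Step 2 computation. It uses that identity both to show $\tilde\vn_i\in\langle\vn_0,\dots,\vn_i\rangle$ and to read off the $\vu$-flag. Yours instead gets triangularity from the column flag $\Col{D}\cap W_{l-2}$, and the $W_i$ from the explicit formula above.

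Finally, drop the hypothesis that $\va_1,\tilde\va_1$ span the same direction. For $n\ge 2$, replacing $\va_1$ by $\va_1+\va_2$ and $\vu_2$ by $\vu_2-\vu_1$ gives another admissible representation, so it is false in general, and you never use it.
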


\begin{proof}
Since $AU = \tilde A \tilde U$ are two rank factorizations of $X-X_0$ we already know that there exists an invertible matrix $T= (t_{ij})_{i,j =1}^n$ with $A = \tilde AT$ and $U = T^{-1} \tilde U$.



For $i\in \{0,1,\ldots,n\}$, let $\vn_i, \tilde \vn_i$ denote vectors such that $\langle \vn_i \rangle = \Col{X_i}^\bot = \ker(X_i^T)$ and $\tilde \vn_i = \Col{\tilde X_i}^\bot = \ker(\tilde X_i^T)$. We are going to prove by induction on $i=0,1,2,\ldots, n$ that
\begin{enumerate}[label=(\alph*)]
\item\label{ite:lemma:representationa} $\langle \vn_0,\ldots,\vn_i \rangle = \langle \tilde \vn_0,\ldots,\tilde \vn_i \rangle$.
\item\label{ite:lemma:representationb} $\langle \vu_1,\ldots,\vu_i, \vu_{i+1} \rangle = \langle \tilde \vu_1,\ldots,\tilde \vu_{i}, \tilde \vu_{i+1} \rangle$.
\end{enumerate}

By considering the orthogonal spaces, we can see that \ref{ite:lemma:representationa} is equivalent to $\Col{X_0} \cap \Col{X_1} \cap \ldots \cap \Col{X_i} = \Col{\tilde X_0} \cap \Col{\tilde X_1} \cap \ldots \cap \Col{\tilde X_i}$. Furthermore, if $\langle \vu_1,\ldots, \vu_{l} \rangle = \langle \tilde \vu_1,\ldots,\tilde \vu_{l} \rangle$ holds for every $l\leq i+1$, then $TU = \tilde U$ implies that $t_{lj} = 0$ for all $l,j$ satisfying $i+1 \geq l>j$; in other words, $T$ looks like a lower triangular matrix on its first $i+1$ rows. So, if we prove (a) and (b) for $i=0,1,\ldots,n$, we are done.

We first prove \ref{ite:lemma:representationa} and \ref{ite:lemma:representationb} for $i=0$. By $X_0 = \tilde X_0$ clearly $\langle \vn_0 \rangle = \langle \tilde \vn_0 \rangle$ holds. For \ref{ite:lemma:representationb}, observe that $\vn_0^T\va_i \neq 0 \Leftrightarrow \va_i \not\in \Col{X_0} \Leftrightarrow i = 1$. This implies that $\vn_0^TX = \vn_0(X_0 + \sum_{j=1}^n \va_j\vu_j^T) = \vn_0^T\va_1\vu_1^T$. Symmetrically, we obtain $\vn_0^TX =\vn_0^T\tilde \va_1\tilde \vu_1^T$. Thus $\tilde \vu_1 = (\vn_0^T\va_1)^{-1}(\vn_0^T\va_1)\vu_1$, which finishes the proof of the base step.

Next, let us prove the induction step $1,\ldots,i-1\to i$. We start with \ref{ite:lemma:representationa}. By induction assumption $\langle \vn_0,\ldots,\vn_{i-1} \rangle = \langle \tilde \vn_0,\ldots,\tilde \vn_{i-1} \rangle$, thus, it is enough to show that $\tilde \vn_{i} \in \langle \vn_0,\ldots,\vn_{i-1}, \vn_i \rangle$. This is equivalent to finding coefficients $r_0,\ldots,r_{i-1}$, such that the equation $(\vn_i + \sum_{j=0}^{i-1}r_j\vn_j)^T \tilde X_i = 0$ holds.

By the induction assumption we know that $T$ has the shape of lower triangular matrix on its first $i$ rows. Recall that $\tilde A = AT$, and so $\tilde \va_j = \sum_{l\geq j} t_{lj} \va_l$ for every $j\leq i$. For $j < i$ this implies $\vn_j^T \tilde \va_{j+1} = t_{j+1,j+1} \vn_j^T \va_{j+1} \neq 0$. For all $j+1<l\leq i$ this further implies $\vn_j^T \tilde \va_{l} = 0$ holds. By both of these observations, for every $j < i$, we obtain 
\begin{equation} \label{eq:uis1} \vn_j^T \tilde X_i = \vn_j^T (\tilde X_j + \sum_{k>j} \tilde \va_{k}\tilde \vu_k^T) = c_j \vu_{j+1}^T, \text{ where } c_j = t_{j+1,j+1} \vn_j^T \va_{j+1} \neq 0.
\end{equation} On the other hand, note that
\begin{equation} \label{eq:uis2}
\vn_i^T \tilde X_i = \vn_i^T (\tilde X_i - X_i) = \vn_i^T (\sum_{j\leq i} \tilde \va_j \tilde\vu_j^T - \va_j\vu_j^T) \in \langle \vu_1^T, \ldots, \vu_j^T\rangle.
\end{equation}
The latter follows from $\langle \tilde\vu_1, \ldots, \tilde\vu_j\rangle = \langle \vu_1, \ldots, \vu_j\rangle$, which holds by the induction hypothesis. Thus, there are coefficients $d_1,\ldots, d_n$, such that $\vn_i^T \tilde X_i = \sum_{j\leq i} d_j \vu_j$. We conclude that $(\vn_i + \sum_{j=0}^{i-1}r_j \vn_j)^T \tilde X_i = 0$ is equivalent to the equation 
\begin{equation} \label{eq:uis3}
\sum_{j = 1}^i d_j \vu_j^T + \sum_{j=0}^{i-1} r_jc_j \vu_{j+1}^T = \sum_{j=0}^{i-1} (d_{j-1}+r_jc_j) \vu_{j+1}^T= 0 
\end{equation}
Equation \eqref{eq:uis3} has the solution $r_j = -d_{j-1}c_j^{-1}$ for $j=0,\ldots,i-1$ (recall that $c_j$ is invertible by \eqref{eq:uis1}). Thus $\tilde \vn_{i} \in \langle \vn_0,\ldots,\vn_{i-1}, \vn_i \rangle$, and we are done.

In order to prove \ref{ite:lemma:representationb} we are again going to use that we can write $\tilde \vn_i$ as linear combination $\tilde \vn_i = \sum_{j = 0}^{i} r_j \vn_j$ (with $r_i = 1$). By multiplying with $X$ we obtain $\tilde \vn_i^T X = \sum_{j=0}^i r_j \vn_j^T X = \sum_{j=0}^i r_j (\vn_j^T \va_{j+1})\vu_{j+1}^T$. On the other hand, clearly also $\tilde \vn_i^T X  = (\vn_i^T \tilde \va_{i+1}) \tilde \vu_{i+1}^T$ holds. Since $\vn_i^T \tilde \va_{i+1} \neq 0$, this implies that $\tilde \vu_{j+1} \in \langle \vu_{1}, \ldots \vu_{i+1}\rangle$. Since $\tilde \vu_{j+1} \notin \langle \tilde \vu_{1}, \ldots \tilde \vu_{i}\rangle = \langle \vu_{1}, \ldots \vu_{i}\rangle$, this implies that $\langle \tilde \vu_{1}, \ldots \tilde \vu_{i+1}\rangle = \langle \vu_{1}, \ldots \vu_{i+1}\rangle$, so we are done.

Since items \ref{ite:lemma:representationa} \ref{ite:lemma:representationb} hold for all $i=0,1,\ldots, n$, this finishes the proof of the lemma.
\end{proof}

By Lemma \ref{lemma:representation}(\ref{ite:lemma:representation2}), we know that the set 
$\Col{X_0} \cap \Col{X_1}\ldots \cap \Col{X_{n-1}} \setminus \Col{X_n}$  of admissible directions $\va$ in Definition \ref{definition:setoftype} does not depend on the concrete representation of $X = X_0 +AU$.

Furthermore, we get the following:

\begin{lemma}\label{lem:admrepA}
Let $X \in \mathfrak T_n$ with representation $X = X_0 + AU$, let $\vu_1^T,\ldots,\vu_n^T$ be the rows of $U$, and let $\theta(X,\va) \in \Theta_n$. Then 
\begin{enumerate}
\item\label{ite:lem:admrepA1} $X + \va \vu^T \in \mathfrak T_{n+1}$ if $\vu \notin \langle \vu_1, \ldots, \vu_n \rangle$;
\item\label{ite:lem:admrepA2} $X + \va \vu^T \in \mathfrak T_n$ if $\vu \in \langle \vu_1, \ldots, \vu_n \rangle$.
\end{enumerate}
\end{lemma}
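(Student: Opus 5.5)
For (1) I will exhibit an explicit $(n+1)$-admissible representation of $X+\va\vu^T$. Set $\va_{n+1}=\va$ and $\vu_{n+1}=\vu$, and take $A'=[A\mid \va]\in\F^{(k+1)\times(n+1)}$ and $U'$ to be $U$ with the extra row $\vu^T$ appended. Then $X+\va\vu^T = X_0 + A'U'$. The intermediate matrices $X_0,\ldots,X_n$ are unchanged, and $X_{n+1}=X+\va\vu^T$.

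The admissibility conditions have to be checked only for the new index.
\begin{itemize}
\item Condition (1) at $i=n$ asks that $\va_{n+1}\notin\Col{X_n}$. This is exactly $\va\notin\Col{X}$, which is part of the definition of $\Theta_n$.
\item Condition (2) at $i=n-1$ asks that $\va_{n+1}\in\Col{X_0}\cap\cdots\cap\Col{X_{n-1}}$. This is again part of the definition of a $\theta$-space of type $n$.
\item For rank, $A'$ has rank $n+1$: the columns of $A$ lie in $\Col{X_0}+\ldots$, and more directly, if $\va$ were in $\Col{A}$ then a short computation gives a contradiction with $\va\notin\Col{X_n}$. I would argue this cleanly via the normal vector $\vn_n$ of $\Col{X_n}$. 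We have $\vn_n^T\va\neq0$, and we want $\vn_n^T\va_j=0$ for $j\le n$. For $j\le n-1$ this follows since $\va_{j+1}\in\Col{X_{j-1}}$-type inclusions propagate. The cleaner route is to note that a linear dependence $\va=\sum c_j\va_j$ forces $X+\va\vu^T$ to have a rank factorization with fewer columns, contradicting Lemma~\ref{lem:thetaspaces}~(\ref{ite:lem:thetaspaces3}) applied inductively.
\item $U'$ has rank $n+1$ precisely because $\vu\notin\langle\vu_1,\ldots,\vu_n\rangle$.
\end{itemize}
I expect the rank of $A'$ to be the only fiddly point. I will handle it by showing $\vn_n^T\va_j=0$ for all $j\le n$, using the admissibility chain: $\va_j\in\Col{X_{j-1}}$ together with $\vn_n^TX_n=0$.

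For (2), write $\vu=\sum_{j} c_j\vu_j$ and try to absorb $\va\vu^T$ into the existing factors. Since $\va\in\Col{X_0}\cap\cdots\cap\Col{X_{n-1}}$, define new columns $\va_j'=\va_j+c_j\va$. Then $X+\va\vu^T=X_0+\sum_j\va_j'\vu_j^T=X_0+A''U$, where $A''=A+\va\,\vc^T$ with $\vc=(c_1,\ldots,c_n)^T$.

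The new partial sums are $X_i'=X_i+\va\sum_{j\le i}c_j\vu_j^T$. For $i\le n-1$ we have $\va\in\Col{X_i}$, so $\Col{X_i'}\subseteq\Col{X_i}$. Both matrices have rank $k$: $X_0$ has full rank, and every $X_i'$ for $i\geq 1$ lies in a $\theta$-space by the admissible construction, so Lemma~\ref{lem:thetaspaces}~(\ref{ite:lem:thetaspaces3}) applies. Hence $\Col{X_i'}=\Col{X_i}$ for $i\le n-1$. The conditions then transfer as follows.
\begin{itemize}
\item Condition (2) for the $\va_j'$: we have $\va_{j}\in\bigcap_{l\le j-2}\Col{X_l}$, and $\va$ lies in the larger intersection, so $\va_j'$ lies in the same intersection. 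Since the column spaces of the $X_i'$ agree with those of the $X_i$, the condition holds.
\item Condition (1), $\va_{i+1}'\notin\Col{X_i'}=\Col{X_i}$ for $i\le n-1$: this holds because $\va\in\Col{X_i}$ while $\va_{i+1}\notin\Col{X_i}$.
\item $A''$ has rank $n$ by the same normal-vector triangularity argument as in (1): $\vn_i^T\va_{i+1}'\neq0$ and $\vn_i^T\va_l'=0$ for $l>i+1$. This makes the matrix $(\vn_{i-1}^T\va_j')$ triangular with nonzero diagonal.
\end{itemize}
Therefore $X_0+A''U$ is an $n$-admissible representation, and $X+\va\vu^T\in\mathfrak T_n$.

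The main obstacle is justifying $\Col{X_i'}=\Col{X_i}$ in (2), which requires full rank of the $X_i'$. If the $\theta$-space argument does not cleanly give this, I would instead show directly that $\vn_i^TX_i'=0$, using $\vn_i^T\va=0$.
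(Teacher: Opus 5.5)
Your construction is the paper's: in (1) you append $\va$ as a new column of $A$ and $\vu^T$ as a new row of $U$, and in (2) you replace $\va_j$ by $\va_j+c_j\va$. The paper simply asserts that these are admissible representations.

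Your verification of (2) is fine. The equality $\Col{X_i'}=\Col{X_i}$ for $i\le n-1$ should be run as an induction on $i$: from $\Col{X_{i-1}'}\subseteq\Col{X_{i-1}}$, $\rk(X_{i-1}')=k$ and $\va_i'\notin\Col{X_{i-1}}$ you get $X_i'\in\theta(X_{i-1}',\va_i')$. Hence $\rk(X_i')=k$, and the inclusion $\Col{X_i'}\subseteq\Col{X_i}$ becomes an equality. The step that fails is your proof in (1) that $[A\mid\va]$ has rank $n+1$.

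You propose to show $\vn_n^T\va_j=0$ for all $j\le n$, where $\vn_n$ spans $\Col{X_n}^\bot$. This is false; in fact $\vn_n^T\va_n\neq 0$ for every $n\ge 1$. Suppose $\va_n=X_n\vb$. Then $(1-\vu_n^T\vb)\va_n=X_{n-1}\vb$, and $\va_n\notin\Col{X_{n-1}}$ forces $\vu_n^T\vb=1$ and $X_{n-1}\vb=0$. This is impossible since $\rk(X_{n-1})=k$. For a concrete case, take $k=2$, $X_0=\begin{bmatrix}\Id_2\\ 0\end{bmatrix}$, $\va_1=(0,0,1)^T$, $\vu_1=(1,0)^T$: then $\Col{X_1}^\bot=\langle(1,0,-1)^T\rangle$, so $\vn_1^T\va_1=-1$.

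The justification you cite, ``$\va_j\in\Col{X_{j-1}}$'', is the negation of admissibility condition (1); admissibility only gives $\va_j\in\Col{X_l}$ for $l\le j-2$. Your ``cleaner route'' does not work either. A relation $\va=\sum_j c_j\va_j$ only gives $\rk(X+\va\vu^T-X_0)\le n$, which does not contradict $\rk(X+\va\vu^T)=k$.

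The repair is the triangularity argument you already use in (2), applied to the whole chain of normals rather than to $\vn_n$ alone. Let $\vn_i$ span $\Col{X_i}^\bot$ for $0\le i\le n$. Admissibility gives $\vn_i^T\va_{i+1}\neq0$ and $\vn_i^T\va_j=0$ for $j\ge i+2$. The condition $\va\in\Col{X_0}\cap\cdots\cap\Col{X_{n-1}}\setminus\Col{X_n}$ gives $\vn_i^T\va=0$ for $i<n$ and $\vn_n^T\va\neq0$. Setting $\va_{n+1}=\va$, the matrix $(\vn_{i-1}^T\va_j)_{i,j\in[n+1]}$ is lower triangular with nonzero diagonal, so $\rk([A\mid\va])=n+1$.

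The same computation shows that full rank of $A$ follows automatically from the two admissibility conditions. That is what justifies the paper's remark that (1) follows directly from the definition.
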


\begin{proof}
\eqref{ite:lem:admrepA1} directly follows from the definition of $\mathfrak T_{n+1}$. For \eqref{ite:lem:admrepA2} note that, if $\vu = \sum_{i=1}^n r_i \vu_i$, then $X + \va \vu^T = X_0 + \sum_{i=1}^n (\va_i+r_i\va)\vu_i^T$, which is an $n$-admissible representation of $X + \va \vu^T$.
\end{proof}

From the previous lemma it follows that, if $\F$ is the field of order $q$, then $\theta$-spaces of type $n$ consist of $q^n$ matrices of type $n$, and $q^k - q^n$ matrices of type $n+1$. On the other hand, also the following holds:

\begin{lemma} \label{lemma:typecount}
Let $q = |\F|$ and let $X \in \mathfrak T_n$, for $1 \leq n \leq k$. Then 
\begin{enumerate}
\item\label{ite:lemma:typecount1} $X$ is in exactly one $\theta$-space of type $n-1$;
\item\label{ite:lemma:typecount2} $X$ is in exactly $q^{k-n}$ $\theta$-spaces of type $n$;
\item\label{ite:lemma:typecount3} $X$ is in no $\theta$-spaces of type $j \neq n-1,n$.
\end{enumerate}
\end{lemma}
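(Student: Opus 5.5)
The approach is to study the $\theta$-spaces through a fixed $X \in \mathfrak T_n$ by direction. By Lemma \ref{lem:thetaspaces}(\ref{ite:lem:thetaspaces1}) and the remark that $\theta(Y,\va)=\theta(X,\va)$ for every $Y\in\theta(X,\va)$, every $\theta$-space containing $X$ has the form $\theta(X,\va)$ with $\va\notin\Col{X}$. Two directions give the same space iff they are collinear modulo $\Col{X}$; more precisely, $\theta(X,\va)=\theta(X,\va')$ iff $\va'\in\F^\times\va$. So the task is to count, up to scalars, the directions $\va$ for which $\theta(X,\va)$ has a given type $j$. For type $j$ we also need some $Y\in\theta(X,\va)\cap\mathfrak T_j$ whose admissible directions contain $\va$.

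Fix an $n$-admissible representation $X=X_0+AU$ with chain $X_0,\dots,X_n=X$, and write $W_i=\Col{X_0}\cap\dots\cap\Col{X_i}$. By Lemma \ref{lemma:representation}(\ref{ite:lemma:representation2}) the $W_i$ are intrinsic. The admissibility conditions imply that each $X_{i+1}$ differs from $X_i$ in a direction $\va_{i+1}\in W_{i-1}\setminus\Col{X_i}$, which gives $\dim W_i=k-i$ and $W_{i}\subsetneq W_{i-1}$.

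For (\ref{ite:lemma:typecount2}), type-$n$ spaces through $X$ are exactly $\theta(X,\va)$ with $\va\in W_{n-1}\setminus\Col{X}=W_{n-1}\setminus W_n$. Here we still need that a type-$n$ space containing $X$ can be based at $X$ itself. This follows from Lemma \ref{lem:admrepA}: the base point $Y$ of a type-$n$ space is in $\mathfrak T_n$, and $X$ lies in it with $X\in\mathfrak T_n$ only if $X=Y+\va\vu^T$ with $\vu\in\langle\vu_i\rangle$. That representation shares the same chain, so $X$ can serve as base point with the same admissible directions. Next, count directions modulo $\Col{X}$ up to scalars. The space $W_{n-1}$ has dimension $k-n+1$ and meets $\Col{X}$ in $W_n$ of dimension $k-n$. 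Two directions give the same $\theta$-space iff they are scalar multiples, so the count is $(q^{k-n+1}-q^{k-n})/(q-1)=q^{k-n}$.

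For (\ref{ite:lemma:typecount1}), a type-$(n-1)$ space through $X$ must be $\theta(Y,\va)$ with $Y\in\mathfrak T_{n-1}$, $\va\in W_{n-2}(Y)\setminus\Col{Y}$, and $X=Y+\va\vu^T$ with $\vu$ outside the span, by Lemma \ref{lem:admrepA}. Existence comes from taking $Y=X_{n-1}$ and $\va=\va_n$. For uniqueness, note that $X$ then has an admissible representation ending in $\va\vu^T$. By Lemma \ref{lemma:representation}(\ref{ite:lemma:representation1}), with $T$ lower triangular, the last column of $A$ is determined up to a scalar, namely $\tilde\va_n=t\,\va_n$. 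Hence the direction $\va$ is forced, and the space equals $\theta(X,\va_n)$.

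For (\ref{ite:lemma:typecount3}), a type-$j$ space contains only matrices of type $j$ and $j+1$, by Lemma \ref{lem:admrepA}. Therefore it suffices to show $\mathfrak T_j\cap\mathfrak T_n=\emptyset$ for $j\neq n$, i.e.\ that the type is well defined. This holds because $n=\rk(X-X_0)$ for any admissible representation. The main obstacle is the uniqueness argument in (\ref{ite:lemma:typecount1}), namely extracting from Lemma \ref{lemma:representation} that the final column $\va_n$ is determined up to a scalar. The lower triangular shape of $T$ gives exactly this, since $\tilde\va_n=t_{nn}\va_n$.
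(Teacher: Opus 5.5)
Your proposal is correct and follows the paper's proof essentially step by step. For (1) you use $X\in\theta(X_{n-1},\va_n)$ together with the lower-triangular transition matrix from Lemma \ref{lemma:representation}, which forces $\langle\tilde\va_n\rangle=\langle\va_n\rangle$; for (2) you count the $q^{k-n}(q-1)$ admissible directions up to scalars; and for (3) you use Lemma \ref{lem:admrepA}. You are in fact slightly more careful than the paper: you check that every type-$n$ space through $X$ can be based at $X$, and that types are disjoint because $\rk(X-X_0)=n$. The only step that neither you nor the paper really justifies is $\dim W_n=k-n$, which your $\va_{i+1}$ argument covers only for $i\le n-1$; it follows from the easily verified identity $W_i=\{X_i\vb \mid \vb\in\langle\vu_1,\ldots,\vu_i\rangle^{\bot}\}$.
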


\begin{proof}
Let $X = X_0 + AU$ be an admissible representation of $X$. So then $X = X_{n-1} + \va_{n} \vu_{n}^T$, with $X_{n-1} = X_0 + \sum_{i=1}^{n-1} \va_i \vu_i^T$. In particular $X \in \theta(X_{n-1},\va_n)$, which is a $\theta$-space of type $n-1$. Next, assume $X$ is in some other $\theta$-space of type $n-1$. This implies that there is a different admissible representation of $X = X_0 + \tilde A \tilde U$, with $\theta = \theta(\tilde X_{n-1},\tilde \va_n)$. By Lemma \ref{lemma:representation}, we know that representations are unique up to multiplication with lower triangle matrices $T$. In particular $\langle \tilde \va_n \rangle = \langle \va_n \rangle$. But this in turn implies that $\theta(X_{n-1},\va_n) = \theta(X,\va_n) = \theta(X,\tilde \va_n) = \theta(\tilde X_{n-1},\va_n)$, which completes the proof of \eqref{ite:lemma:typecount1}.

For \eqref{ite:lemma:typecount2} we simply need to count the number of admissible directions $\langle \va \rangle$ such that $\theta(X,\va)$ is a set of type $n$. Note that 
$\Col{X_0} \cap \Col{X_1}\cap \ldots \cap \Col{X_{n-1}}$ 
is a space of dimension ${k-n+1}$, thus 
$$|\Col{X_0} \cap \Col{X_1} \cap \ldots \cap \Col{X_{n-1}} \setminus \Col{X_n}| = q^{k-n+1} - q^{k-n} = q^{k-n}(q-1).$$ 
Hence, up to multiplication with some non-zero scalar, there are $q^{k-n}$ distinct choices for $\va$.

\eqref{ite:lemma:typecount3} clearly follows from the fact that every space of type $n$ contains only matrices of type $n$ or $n+1$, i.e., Lemma \ref{lem:admrepA}.
\end{proof}

We are now ready to prove the main result of this subsection:

\begin{theorem} \label{thm:reprdelta}
Let $k\in \N$, let $\F$ be a finite field of order $q$ and let $\algR$ be a regular module, in which $q$ is invertible. Let $X_0 \in \F^{(k+1)\times k}$ be of rank $\rk(X_0) = k$, and let $\Theta_n$, for $n\in [k]_0$ be as in Definition \ref{definition:type}. Then
$$\delta_{X_0}(X) = \sum_{n=0}^k (-1)^n q^{ \binom{n}{2} - (1 + n)k} \sum_{V \in \Theta_n} \delta_V(X).$$
Thus $\delta_{X_0}$ is in the $(\F^k,\algR)$-clonoid generated by $\delta_{\Id_k}$. This further implies that $\delta_{X_0}$ is generated by its $k$-ary $(\F^k,\algB)$-minors.
\end{theorem}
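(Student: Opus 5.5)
The plan is to verify the identity pointwise. For every $X\in\F^{(k+1)\times k}$ I will compute the right-hand side by counting, for each $n$, how many $\theta$-spaces of type $n$ contain $X$. Everything needed is already in Lemma~\ref{lem:admrepA} and Lemma~\ref{lemma:typecount}. There are three cases, according to the type of $X$.

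\emph{Case $X\notin\mathfrak T$.} By Lemma~\ref{lem:admrepA}, every $\theta$-space of type $n$ consists only of matrices of type $n$ or $n+1$. So $X$ lies in no $V\in\Theta$, and both sides vanish.

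\emph{Case $X=X_0$ (type $0$).} Only $\theta$-spaces of type $0$ can contain $X_0$. These are the $\theta(X_0,\va)$ with $\va\notin\Col{X_0}$. Up to scalars there are
\[
\frac{q^{k+1}-q^{k}}{q-1}=q^{k}
\]
of them, and they are pairwise distinct by Lemma~\ref{lem:thetaspaces}. Since the coefficient for $n=0$ is $q^{-k}$, the right-hand side equals $1=\delta_{X_0}(X_0)$.

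\emph{Case $X\in\mathfrak T_n$ with $n\ge 1$.} By Lemma~\ref{lemma:typecount}, $X$ lies in exactly one space of type $n-1$, in exactly $q^{k-n}$ spaces of type $n$, and in no others. The right-hand side at $X$ is therefore
\[
(-1)^{n-1}c_{n-1}+(-1)^n q^{k-n}c_n,\qquad c_n=q^{\binom n2-(n+1)k}.
\]
I must check that these two terms cancel, i.e.\ that $c_{n-1}=q^{k-n}c_n$. For $n=k$ there are no spaces of type $k$ with $X$ in them in a nondegenerate way, and this boundary case needs to be treated consistently.

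This exponent bookkeeping is where I expect the main obstacle. A naive computation gives
\[
\binom n2-(n+1)k+k-n=\binom{n-1}2-nk-1,
\]
which differs by exactly one power of $q$ from the exponent $\binom{n-1}{2}-nk$ of $c_{n-1}$. So I would first recheck the count in Lemma~\ref{lemma:typecount}(\ref{ite:lemma:typecount2}). In particular I need to confirm the dimension of $\Col{X_0}\cap\cdots\cap\Col{X_{n-1}}$ and whether $\theta(X,\va)$ depends only on $\langle\va\rangle$ modulo $\Col{X}$ rather than on $\langle\va\rangle$ itself. I would then adjust the counting, or the normalisation of the coefficients, so that the telescoping cancellation is exact. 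The guiding principle is that the coefficients are forced: the $n=0$ coefficient is fixed by the case $X=X_0$, and the ratio of consecutive coefficients is fixed by the requirement that the two contributions at a type-$n$ matrix cancel. Only invertibility of $q$ in $\algR$ is used to make sense of these coefficients.

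Once the identity is established, the remaining claims are formal. By Definition~\ref{definition:deltaV} and Lemma~\ref{lemma:thetaspaces}, each $\delta_V$ equals $X\mapsto\delta_{\Id_k}(M_VX)$, which is a minor of $\delta_{\Id_k}$. The displayed formula is an $\algR$-linear combination of such minors, so $\delta_{X_0}\in\langle\delta_{\Id_k}\rangle_{\F^k,\algR}$. Next, $X_0=X_0\cdot\Id_k$ is a rank factorization, and $\delta_{\Id_k}(MX)=\delta_{X_0}(X_0MX)$ with $\rk(X_0M)\le k$. Hence the remark around \eqref{eq:charfromrankdec} shows that $\delta_{X_0}$ is generated by its $k$-ary minors. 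Finally, passing from the regular module $\algR$ to an arbitrary $\algR$-module $\algB$ is the linearity argument of Lemma~\ref{lemma:orbitsupport}, applied to $\delta_{X_0}^b$.
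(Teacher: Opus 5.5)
Your approach is the paper's own: evaluate both sides pointwise, and use Lemma~\ref{lem:admrepA} and Lemma~\ref{lemma:typecount} to get $q^k\alpha_0=1$ at $X_0$ and $\alpha_{n-1}+q^{k-n}\alpha_n=0$ at a matrix of type $n\ge 1$. Then note that each $\delta_V$ is a minor of $\delta_{\Id_k}$ and that $\delta_{\Id_k}(X)=\delta_{X_0}(X_0X)$.

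The discrepancy you found is real, but you should resolve it on the side of the coefficients, not the counts: the exponent $\binom{n}{2}$ in the displayed formula is a typo for $\binom{n+1}{2}$. The recursion $\alpha_0=q^{-k}$, $\alpha_n=-q^{n-k}\alpha_{n-1}$ gives
\[
\alpha_n=(-1)^n q^{-k}\prod_{j=1}^{n}q^{j-k}=(-1)^n q^{\binom{n+1}{2}-(n+1)k}.
\]
This is exactly what the paper's proof writes down, namely $\alpha_i=(-1)^iq^{\frac{i(i+1)}{2}-(1+i)k}$.

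The counts in Lemma~\ref{lemma:typecount} are correct:
\begin{itemize}
\item $\theta(X,\va)$ determines $\langle\va\rangle$ itself, not $\langle\va\rangle$ modulo $\Col{X}$, since the matrices $\va\vu^T$ have column space $\langle\va\rangle$.
\item $\Col{X_0}\cap\dots\cap\Col{X_{n-1}}$ has dimension $k+1-n$. This is because the normals $\vn_j$, with $\langle\vn_j\rangle=\Col{X_j}^{\bot}$ for $j=0,\dots,n-1$, are linearly independent: $\vn_j^T\va_{j+1}\neq 0$ and $\vn_j^T\va_l=0$ for $l\ge j+2$.
\item $\Col{X_n}$ meets this space in dimension $k-n$, which gives $q^{k-n}$ directions.
\end{itemize}

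Already for $k=1$ you can see that the statement as printed fails. Each type-$1$ matrix lies on exactly one space in $\Theta_0$ and one in $\Theta_1$, which forces $\alpha_1=-\alpha_0=-q^{-1}$. The printed exponent $\binom{1}{2}-2k=-2$ would instead give the value $q^{-1}-q^{-2}\neq 0$ there, e.g.\ over $\algR=\mathbb{Q}$.

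Also, $n=k$ is not a degenerate case. A type-$k$ matrix lies in exactly $q^{0}=1$ space of type $k$, and the same cancellation equation applies.

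With the corrected exponent your argument is complete. The ``Thus'' parts of the statement are unaffected, since they only need some $\algR$-linear combination of the $\delta_V$.
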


\begin{proof}
Assume that there are some coefficients $\alpha_0,\ldots, \alpha_k$, such that \[\delta_{X_0}(X) = \sum_{i=1}^k \alpha_i \sum_{V \in \Theta_i} \delta_V(X)\] holds for every $X \in \F^{k\times (k+1)}$.

If $X \notin \mathfrak T$, then this equation clearly holds, as all functions then evaluate to $0$. If $X = X_0$, i.e., $X$ has type $0$, we get the equation $1= q^{k} \alpha_0$. Evaluating at some matrix of type $i$ for $k\geq i>0$ gives us the equation $\alpha_{i-1} + q^{k-i} \alpha_i = 0$ by Lemma \ref{lemma:typecount}. By assumption $q$ is invertible in $\algR$, thus the equations are equivalent to $\alpha_0 = q^{-k}$ and $\alpha_i = -\alpha_{i-1}q^{i-k}$ for $i\in[k]$. It is now easy to see that this system of equations has the solution $\alpha_i = (-1)^i q^{\frac{i(i+1)}2 - (1+i)k}$. Since every $\delta_V$ is an $(\alg F^k, \alg R)$-minor of $\delta_{\Id_k}$ (see Definition \ref{definition:deltaV}), and $\delta_{\Id_k}(X) = \delta_{X_0}(X_0X)$, this finishes the proof.

\end{proof}

\subsection{Summary} \label{sect:proofsummary}

We now summarize the proof of our main result, and discuss some direct consequences.

\begin{theorem} \label{theorem:main}
Let $\F$ be a finite field, $k \in \N$, and let $\algB$ be an $\algR$-module, such that $|F|$ is invertible in $\algR$. Then $\clodO_{\F^k,\algB}$ is uniformly generated by $k$-ary $(\F^k,\algB)$-minors.
\end{theorem}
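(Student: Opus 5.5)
The proof assembles the pieces already established. We apply Lemma \ref{lemma:reductiontosingleton}(1), whose hypothesis is: for every $j \in [k]$ there is a matrix $X_0 \in \F^{(j+1)\times j}$ of rank $j$ such that $\delta_{X_0}$ is generated by its $j$-ary $(\F^j,\algR)$-minors. Once this holds, Lemma \ref{lemma:reductiontosingleton}(1) shows that $\clodO_{\F^k,\algB}$ is uniformly generated by $k$-ary minors for every $\algR$-module $\algB$. This is exactly the claim, since $R_k(\F^k)$ corresponds to matrices of rank at most $k$.

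To verify the hypothesis we fix $j \in [k]$ and apply Theorem \ref{thm:reprdelta} with $j$ in place of $k$. It gives the explicit identity
$$\delta_{X_0} = \sum_{n=0}^{j} (-1)^n q^{\binom{n}{2}-(1+n)j}\sum_{V\in\Theta_n}\delta_V,$$
which is valid because $q=|F|$ is invertible in $\algR$. Each $\delta_V$ has the form $X\mapsto \delta_{\Id_j}(M_VX)$ with $M_V \in \F^{j\times (j+1)}$ of rank $j$, by Lemma \ref{lemma:thetaspaces}(\ref{ite:lemma:thetaspaces2}). Moreover $\delta_{\Id_j}(Y) = \delta_{X_0}(X_0 Y)$. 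Hence each summand equals $\delta_{X_0}(X_0M_VX)$, and $X_0M_V \in \F^{(j+1)\times(j+1)}$ has rank $\le j$. So we have produced the required expression of $\delta_{X_0}$ as an $\algR$-linear combination of its $j$-ary minors; this is the remark following Lemma \ref{lemma:thetainterpolation}, via equation \eqref{eq:charfromrankdec}.

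One bookkeeping point needs checking: the hypotheses are required for all $j\le k$ simultaneously, and the induction inside Lemma \ref{lemma:reductiontosingleton} uses the case $j$ for the vector space $\F^j$ with $(j+1)$-ary functions. Theorem \ref{thm:reprdelta} is stated for arbitrary $k\in\N$, so it covers each $j$ uniformly. The target module is the regular module $\algR$ throughout, and Lemma \ref{lemma:reductiontosingleton} itself passes to general $\algB$, in the spirit of Corollary \ref{corollary:orbitsupport}. No step here is a genuine obstacle, since the substantive work (the inclusion--exclusion over $\theta$-spaces and the type counting in Lemma \ref{lemma:typecount}) is already contained in Theorem \ref{thm:reprdelta}. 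The only care needed is to match the index conventions between that theorem and the hypothesis of Lemma \ref{lemma:reductiontosingleton}.
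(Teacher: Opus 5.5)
Your proposal is correct and follows the paper's proof: Theorem \ref{thm:reprdelta}, applied with each $j\in[k]$ in place of $k$, supplies the hypothesis of Lemma \ref{lemma:reductiontosingleton}, and part (1) of that lemma gives the claim for every $\algR$-module $\algB$. Your explicit check that each $\delta_V$ is the $j$-ary minor $X\mapsto\delta_{X_0}(X_0M_VX)$ only spells out what the paper leaves to the proof of Theorem \ref{thm:reprdelta}. One cosmetic remark: the exponent you copied should read $\binom{n+1}{2}-(1+n)j$, as solving $\alpha_0=q^{-j}$, $\alpha_n=-q^{n-j}\alpha_{n-1}$ shows, but the argument only uses that such coefficients exist in $\algR$, so this does not matter.
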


\begin{proof}
For every $j\in[k]$, let us pick some matrix $Z_j \in \F^{(j+1)\times j}$, such that $\rk(Z_j) = j$. Then, Theorem \ref{thm:reprdelta} implies that $\delta_{Z_j}$ is also generated by $j$-ary $(\F^j,\algR)$-minors. Lemma \ref{lemma:reductiontosingleton} (1) then implies that $\clodO_{\F^k,\algB}$ is uniformly generated by $k$-ary $(\F^k,\algB)$-minors, for every $\algR$-module $\algB$.
\end{proof}

As immediate corollaries of Theorem \ref{theorem:main} and Corollary \ref{corollary:unigen} we obtain the following:

\begin{corollary} \label{corollary:main}
Let $\F^k$ be the $k$-dimensional vector space over a finite field $\F$ and let $\algB$ be a faithful $\algR$-module, such that $|F|$ is invertible in $\algR$. Then every clonoid $\clodC$ from $\F^k$ to $\algB$ is generated by its $k$-ary part, i.e., $\clodC = \langle \clodC^{(k)}\rangle_{\F^k, \algB}$.
\end{corollary}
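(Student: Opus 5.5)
This follows directly from Theorem \ref{theorem:main} combined with Corollary \ref{corollary:unigen}. Theorem \ref{theorem:main} tells us that $\clodO_{\F^k,\algB}$ is uniformly generated by $k$-ary $(\F^k,\algB)$-minors. By Definition \ref{def:unifgen}, this holds arity by arity. In particular, $\clodO_{\F^k,\algB}^{(k+1)}$ is uniformly generated by its $k$-ary $(\F^k,\algB)$-minors.

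This is exactly the hypothesis of Corollary \ref{corollary:unigen} with $n=k$, taking $\cloA=\Clo(\F^k)$ and $\cloB=\Clo(\algB)$. That corollary then yields $\clodC=\langle \clodC^{(k)}\rangle_{\F^k,\algB}$ for every $(\F^k,\algB)$-clonoid $\clodC$, which is the claim.

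The faithfulness assumption is used only to make the condition "$|F|$ is invertible in $\algR$" meaningful and in the same form as in Theorem \ref{theorem:main}. Every module is term equivalent to a faithful one, and the clonoid lattice depends only on $\Clo(\algB)$. So nothing beyond Theorem \ref{theorem:main} needs to be checked, and I expect no real obstacle.

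There is one bookkeeping point. Corollary \ref{corollary:unigen} passes through Theorem \ref{theorem:unigen}, whose hypothesis requires $\clodC$ to be a clonoid from $\cloA_{const}$. Corollary \ref{corollary:unigen} applies this to the full clonoid $\clodO_{A,B}$, which is trivially closed under composition with constants. So the hypothesis holds, and it then descends to every subclonoid via Corollary \ref{corollary:unigen1}.
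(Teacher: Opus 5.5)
Your proof is correct and is exactly the paper's argument: the corollary is obtained immediately from Theorem~\ref{theorem:main}, which gives uniform generation of $\clodO_{\F^k,\algB}$ by $k$-ary minors and in particular of its $(k+1)$-ary part, combined with Corollary~\ref{corollary:unigen} for $n=k$. Your bookkeeping remarks are also fine, and faithfulness is not actually needed for the implication, since Theorem~\ref{theorem:main} holds for any $\algR$-module in which $|F|$ is invertible.
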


In particular, if $\algB$ is finite, we can prove the following result:

\begin{corollary} \label{corollary:main2}
Let $\F^k$ be the $k$-dimensional vector space over a finite field $\F$ and let $\algB$ be a finite module of coprime order. Then
\begin{enumerate}
\item there are only finitely many clonoids from $\F^k$ to $\algB$.
\item every clonoid from $\F^k$ to $\algB$ is finitely related by an at most $|F|^{k\times k}$-ary relation.
\end{enumerate}
\end{corollary}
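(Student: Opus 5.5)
The plan is to combine Theorem~\ref{theorem:main} with the general machinery of Section~\ref{sec:unifgen}. We may assume $\algB$ is faithful over some ring $\algR$: replacing the acting ring by $R/\mathrm{Ann}(B)$ gives a term-equivalent module, so it has the same clonoids. Since $|B|$ is coprime to $q=|F|$, multiplication by $q$ is a bijection on the finite abelian group $B$. Its inverse is multiplication by some integer $q'$ with $qq'\equiv 1 \pmod{\exp(B)}$. Hence $q$ is invertible in $R/\mathrm{Ann}(B)$, and Theorem~\ref{theorem:main} applies: $\clodO_{\F^k,B}$ is uniformly generated by $k$-ary $(\F^k,\algB)$-minors. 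In particular $\clodO^{(k+1)}_{\F^k,B}$ is $(k,\F^k,\algB)$-UG.

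For (1), Corollary~\ref{corollary:unigen} gives $\clodC=\langle \clodC^{(k)}\rangle_{\F^k,\algB}$ for every clonoid $\clodC$. The map $\clodC\mapsto\clodC^{(k)}$ is therefore injective into the power set of the finite set $B^{F^{k\cdot k}}$, so there are only finitely many clonoids. Concretely, there are at most $2^{|B|^{|F|^{k^2}}}$ of them.

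For (2), we apply Corollary~\ref{corollary:fingen} with $n=k$ and $A=F^k$. This shows that every clonoid equals $\Pol(\mathbb A,\mathbb B_{\clodC})$ for relational structures of arity at most $|A|^k=|F^k|^k=|F|^{k\cdot k}$, i.e.\ $|F|^{k\times k}$ in the notation of the statement. Since $\mathbb A$ and $\mathbb B_{\clodC}$ each consist of a single relation, as in Lemma~\ref{lemma:fingen}, the clonoid is determined by one relation pair of that arity.

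The only step needing any care is the reduction to a faithful module with $q$ invertible. Faithfulness matters here because coprimality alone only makes $q$ invertible modulo the annihilator, not in an arbitrary ring acting on $B$. Everything else is a direct citation of earlier results.
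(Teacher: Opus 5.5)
Your proof is correct and follows the paper's argument: pass to a faithful module so that $|F|$ becomes invertible in the acting ring, then combine Theorem~\ref{theorem:main} (equivalently Corollary~\ref{corollary:main}) with Corollaries~\ref{corollary:unigen} and~\ref{corollary:fingen} for $n=k$. You also spell out why $q$ is invertible modulo the annihilator, which the paper only asserts.
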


\begin{proof}
Without loss of generality we can assume that $\algB$ is a faithful $\algR$-module. Then $|F|$ is invertible in $\algR$ if and only if $\algB$ is of coprime order.
The statement of the corollary then directly follows from Corollaries \ref{corollary:main}, \ref{corollary:unigen}, and \ref{corollary:fingen}.
\end{proof}

By combining our results with \cite{MW-clonoidsmodules} we are able to prove the following statement, that corresponds to the most general setting, in which Conjecture \ref{conjecture:main} is now resolved:

\begin{corollary} \label{corollary:mainresultproducts}
Let $\algA =\F_1^{k_1}\times \cdots \times \F_n^{k_n} \times \alg D$ as $\F_1\times \cdots \times\F_n \times \algR$-module, where all $\F_i$ are finite fields, and $\alg D$ is a finite distributive $\algR$-module. Let $l$ be the nilpotence degree of the Jacobson radical of $\algR$, and let $k = \max(k_1,\ldots,k_n,l)$. Further let $\algB$ be an $\algS$-module, such that $|A|$ is invertible in $\algS$. Then
\begin{enumerate}
\item every clonoid from $\alg A$ to $\algB$ is generated by its $k$-ary part.
\item If $\algB$ is finite then there are only finitely many clonoids from $\algA$ to $\algB$.
\end{enumerate}
\end{corollary}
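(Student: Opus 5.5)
We plan to combine Theorem \ref{theorem:main} and Theorem \ref{theorem:distributive} through Proposition \ref{proposition:ugproduct}, and then conclude with Corollary \ref{corollary:unigen}.

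\textbf{Step 1: reduce to faithful modules over each factor.} Write $\algA = \algA_1\times\cdots\times\algA_n\times\alg D$ with $\algA_i=\F_i^{k_i}$, viewed as a module over the product ring $\F_1\times\cdots\times\F_n\times\algR$. Acting componentwise, the central idempotents of this ring split every linear combination into its components. Hence $\Clo(\algA)$ is exactly the direct product of clones $\Clo(\F_1^{k_1})\times\cdots\times\Clo(\F_n^{k_n})\times\Clo(\alg D)$ in the sense defined before Proposition \ref{proposition:ugproduct}. Concretely, an $m$-ary term is $\sum_j (c^1_j,\ldots,c^n_j,r_j)x_j$, and each ring component acts only on its own coordinate. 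This is the one place where some care is needed: we must check that $\Clo(\algA)^{(m)}$ is the full product of the factors' $m$-ary parts and not a proper subset. That holds because the components $(c^1_j,\ldots,r_j)$ can be chosen independently.

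\textbf{Step 2: uniform generation for each factor.} Since $|A|$ is invertible in $\algS$, each prime divisor of $|A|$ is invertible. In particular $|F_i|$ is invertible in $\algS$, and so is $\chars(R)$, whose prime divisors divide $|D|$ because $\alg D$ is faithful; we may assume faithfulness by passing to the quotient by the annihilator. Theorem \ref{theorem:main} then shows that $\clodO_{F_i^{k_i},B}$ is uniformly generated by $k_i$-ary minors. Theorem \ref{theorem:distributive} shows that $\clodO_{D,B}$ is uniformly generated by $l$-ary minors. By Lemma \ref{lemma:urobs}(2), all of these are uniformly generated by $k$-ary minors, where $k=\max(k_1,\ldots,k_n,l)$.

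\textbf{Step 3: conclude.} Applying Proposition \ref{proposition:ugproduct} inductively over the factors shows that $\clodO_{A,B}$ is uniformly generated by $k$-ary $(\algA,\algB)$-minors. Corollary \ref{corollary:unigen} then gives (1): every clonoid from $\algA$ to $\algB$ is generated by its $k$-ary part. For (2), when $\algB$ is finite there are only finitely many subsets of $\clodO^{(k)}_{A,B}$. Since each clonoid is determined by its $k$-ary part, there are only finitely many clonoids.
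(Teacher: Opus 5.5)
Your proposal is correct and follows the paper's own argument. The clone of $\algA$ splits as a direct product of the factor clones, Theorem~\ref{theorem:main} and Theorem~\ref{theorem:distributive} give uniform generation on each factor, Proposition~\ref{proposition:ugproduct} combines them, and Corollary~\ref{corollary:unigen} finishes; you add useful detail on the clone factorization and on why $\chars(R)$ is invertible. The one step you leave implicit, as does the paper, is that replacing $\algR$ by $\algR/\mathrm{Ann}(D)$ does not raise the nilpotence degree of the Jacobson radical, which holds for finite $\algR$ because with $I=\mathrm{Ann}(D)$ one has $J(R/I)=(J(R)+I)/I$.
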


\begin{proof}
Note that the clone of the source algebra factors over the clones of the single factors. Therefore, the stament follows from Theorem \ref{theorem:main}, Theorem \ref{theorem:distributive} and Proposition \ref{proposition:ugproduct}.
\end{proof}

In particular, Conjecture \ref{conjecture:main} holds for modules $\algA$ like in Corollary \ref{corollary:mainresultproducts}. For clonoids between polynomially equivalent algebras a similar statement for arity $k+1$ can be derived, by Lemma \ref{lem:unifgenforpolyequiv}.

\subsection{A description of the lattice of clonoids from $\F^k$ to $\algB$} \label{sect:clonoidlattice}

We next aim to refine Corollary \ref{corollary:main} to obtain an explicit description of the lattice of clonoids from $\F^k$ to $\algB$. By Corollary \ref{corollary:main}, this is equivalent to classify the submodules of $\algB^{F^{k \times k}}$ that are invariant under composition with matrices $F^{k\times k}$. We are going to do so in Theorem \ref{theorem:clonoidlattice}, using the following notation:

\begin{definition}
Let $\F$ a finite field and let $\algB$ be an $\algR$-module. Then 
\begin{itemize}
\item $\algR[\GL_i(\F)]$ is the group ring of $\GL_i(\F)$ over $\algR$. We denote its elements by formal sums $\sum_{M \in \GL_i(\F)} \alpha_M M$, for $\alpha_M \in R$.
\item For every $1 \leq i \leq k \in \N$, let us denote by $\algM_{i,k}(\F,\algB)$ the $\algR[\GL_i(\F)]$-module of all functions $f\colon \{X \in \F^{i \times k} \mid \rk(X) = i\} \to \algB$. The group ring naturally acts on the set by the action $$(\sum_{M \in \GL_i(\F)} \alpha_M M)f(X) = \sum_{M \in \GL_i(\F)} \alpha_M f(MX).$$
We further define $\algM_{0,k}(\F,\algB) = \algB$ (as $\algR$-module).
\end{itemize}
\end{definition}

To prove Theorem \ref{theorem:clonoidlattice},  we first prove the following characterization of $(\F^k,\algB)$-clonoids:

\begin{proposition} \label{proposition:grouprings}
Let $\F^k$ be the $k$-dimensional vector space over a finite field $\F$ and let $\algB$ be an $\algR$-module, such that $|F|$ is invertible in $\algR$.

Then, for all indices $0 \leq i\leq k \leq m$, and every matrix $A \in \F^{m \times i}$ of full rank $\rk(A) = i$, there is an $\algR$-linear embedding $L_A \colon \algM_{i,k}(\F,\algB) \to \clodO_{\F^k,\algB}^{(m)}$, such that 
\begin{enumerate}
\item $L_A(f)(AU) = f(U)$ if $\rk(U) = i$,
\item $L_A(f)(X) = 0$ if $\rk(X) \leq i$ and $\Col{X} \neq \Col{A}$,
\item $L_{TA}(f)(TX) = L_A(f)(X)$ for all $T \in \GL_m(\F)$,
\item $L_A(f)(X)$ can be computed in time $O(m^3)$,
\item $\clodC$ is an $(\F^k,\algB)$-clonoid, if and only if for every $i \in [k]_0$, the pre-images $L_A^{-1}(\cloC)$ are $\algR[\GL_i(\F)]$-submodules of $\algM_{i,k}(\F,\algB)$ that only depends on $i=\rk(A)$.
\end{enumerate}
\end{proposition}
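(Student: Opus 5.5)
We will define $L_A$ explicitly from the operation $J_H$ of Lemma \ref{lemma:thetainterpolation}, with $H = \Col{A}$. Given $f \in \algM_{i,k}(\F,\algB)$, first extend it to a function $\hat f_A \colon \F^{m\times k} \to B$ by $\hat f_A(AU) = f(U)$ for $\rk(U)=i$, and $\hat f_A(X)=0$ for all $X$ not of this form. This is well defined because $A$ has full column rank, so $U$ is determined by $AU$. Next, $\supp(\hat f_A)$ consists of matrices of rank exactly $i$ with column space $\Col{A}$, so the hypotheses of Lemma \ref{lemma:thetainterpolation} on the support hold. We then set $L_A(f) = J_{\Col{A}}(\hat f_A)$ when $i<m$, and $L_A(f) = \hat f_A$ when $i=m$; the latter case is $m=k=i$, where everything is $k$-ary.

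For the needed input, Theorem \ref{thm:reprdelta} combined with Lemma \ref{lemma:reductiontosingleton} gives that $\delta_{X_0}$ is generated by its $i$-ary minors for some $X_0 \in \F^{m\times k}$ of rank $i$. Choosing $X_0 = AZ$, the remark after Lemma \ref{lemma:thetainterpolation} reduces this to a full-rank $\delta$ on $\F^{(i+1)\times i}$, pulled back along a rank factorization.

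Properties (1) and (2) then follow directly from Lemma \ref{lemma:thetainterpolation}(1) evaluated on matrices of rank $\le i$. Linearity is clear, and injectivity follows from (1). For (3), I would use the explicit formula \eqref{eq:JHdefinition} together with its conjugated form $\sum \alpha_M f(SMS^{-1}X)$. I would \emph{define} $J_{\Col{TA}}$ as the $T$-conjugate of $J_{\Col{A}}$, and fix one base choice per orbit; this makes (3) hold by construction, since $\hat f_{TA}(TX) = \hat f_A(X)$. The one thing to check is consistency when $T$ stabilizes $\Col{A}$; I would sidestep this by noting that any two valid choices agree on the values prescribed by (1) and (2).

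The main obstacle is (4) together with the part of (5) asking for independence of $A$. The sum defining $J_H$ has exponentially many terms in $m$, so for (4) I would avoid evaluating it and instead use a closed form. The plan is to pick $T \in \GL_m(\F)$ with $TA = \begin{bmatrix}\Id_i\\0\end{bmatrix}$ by Gaussian elimination in $O(m^3)$, and then reduce via (3) to the standard $A$. For the standard $A$, I would argue that $L_A(f)(X)$ depends only on the first $i+1$ rows of $X$ modulo minors. More precisely, I would choose $J_H$ built from the $(i+1)$-row formula of Theorem \ref{thm:reprdelta}, applied to the top $i$ rows plus one further row combination. This makes $L_A(f)(X)$ an explicit linear expression in $f$ at polynomially many points, each computable by row reduction. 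I expect this to be the hardest step, and I would first attempt it by restricting to $m=i+1$ and extending via Lemma \ref{lemma:reductiontosingleton}.

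For (5), the forward direction starts from a clonoid $\clodC$. By (1), (2) and the uniform representability of $L_A$ and its left inverse $g\mapsto (U\mapsto g(AU))$, the preimage $L_A^{-1}(\clodC)$ consists exactly of the functions $U\mapsto g(AU)$ restricted to rank-$i$ matrices, for $g\in \clodC$ with suitable support. This set is closed under the $\GL_i$ action, since $g(AMU)$ is a minor of $g$. It is also independent of $A$, because the maps $X\mapsto AX$ and $A'=AS$ are clonoid operations. For the converse, given compatible submodules $N_i$, I would let $\clodC$ be the set of all $g$ whose interpolation pieces $g\mapsto J_i$-components, obtained by peeling off ranks as in the proof of Lemma \ref{lemma:reductiontosingleton}, lie in $L_A(N_i)$. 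Closure of $\clodC$ under minors then follows from Theorem \ref{theorem:main} and (3).
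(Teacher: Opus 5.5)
Your definition $L_A(f)=J_{\Col{A}}(\hat f_A)$ is the same as the paper's construction. However, the input you use to obtain $J_{\Col{A}}$ is false when $i<k$. You assert that some $\delta_{X_0}$ with $X_0\in\F^{m\times k}$ of rank $i$ is generated by its $i$-ary $(\F^k,\algR)$-minors; equivalently, via the remark after Lemma~\ref{lemma:thetainterpolation}, that $\delta_{X_0}\in\langle\delta_U\rangle_{\F^k,\algR}$ with $U\in\F^{i\times k}$. Every $i$-ary minor $X\mapsto g(MX)$ with $\rk(M)\le i$ depends on $X$ only through $NX$ for some $N\in\F^{i\times m}$, and such functions cannot produce point indicators on $\F^{m\times k}$ when $i<k$. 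For $i=0$ they are constants. For $i=1$, take $\F=\F_2$, $k=m=2$ and $X_0=\begin{bmatrix}1&0\\0&0\end{bmatrix}$. The nonzero unary minors of $\delta_{X_0}$ are the three functions $X\mapsto[\vb^TX=(1,0)]$ with $\vb\neq 0$. Evaluating at $X_0$, at the matrix with rows $(1,0),(0,1)$, and at the matrix with rows $(0,1),(1,1)$ shows that $\delta_{X_0}$ is not a combination of them. So Lemma~\ref{lemma:thetainterpolation}(1) cannot be invoked. Pulling Theorem~\ref{thm:reprdelta} back along rank factorizations (Lemma~\ref{lemma:reductiontosingleton}(2)) only yields an $(i,\F^k,\algB)$-UR operator $I_i$ with $I_i(g)(X)=g(X)$ for $\rk(X)\le i$. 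The missing ingredient is Lemma~\ref{lemma:basicinterpolation}, whose hypothesis only asks for agreement with $\delta_{X_0}$ on matrices of rank $\le i$. Dropping from $I_i$ the terms with $\Col{M}\neq\Col{X_0}$ does not change $I_i(\delta_{X_0})$, because $MX=X_0$ and $\rk(M)\le i$ force $\Col{M}=\Col{X_0}$. So that lemma applies to every $X_0$ of rank $i$ and gives $J_{\Col{A}}$ with exactly the rank-$\le i$ behaviour you need for (1) and (2). This is how the paper proceeds.

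The later items also have holes.

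\emph{Item (3).} Your remark that ``any two valid choices agree on the values prescribed by (1) and (2)'' only controls $L_A(f)$ on matrices of rank $\le i$. Item (3) is an identity at all $X$, and nothing in your argument controls $J_{\Col{A}}(\hat f_A)$ at higher rank. For $T$ stabilizing $\Col{A}$ you need one of two things. Either choose $J_{\Col{A}}$ so that it commutes with conjugation by the stabilizer (the paper assumes this ``without loss of generality''; naive averaging fails, since the stabilizer's order need not be invertible in $\algR$). Or prove that a function in the span of $i$-ary minors is determined by its restriction to matrices of rank $\le i$. You supply neither.

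\emph{Item (4).} Your observation is fair that the defining sum of $J_H$ runs over all $M\in\F^{m\times m}$ with $\Col{M}=H$, a set that grows with $m$. But your substitute rests on the claim that, for $A=\begin{bmatrix}\Id_i\\0\end{bmatrix}$, the value $L_A(f)(X)$ depends only on the first $i+1$ rows of $X$. As stated this is false. Take $i\ge 1$ and $m\ge i+2$, and let $X$ agree with $AU$ except that its row $i+2$ equals the first row of $U$. Then $\rk(X)=i$ and $\Col{X}\neq\Col{A}$, so $L_A(f)(X)=0$ by (2), while $L_A(f)(AU)=f(U)$, which is nonzero for suitable $f$. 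So (4) is not established.

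\emph{Item (5), converse.} Property (3) only covers invertible $M$. Closure under $g\mapsto g(MX)$ for singular or non-square $M$ is the real content of this direction, and your proposal does not address it.
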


\begin{proof}
Recall that by Theorem \ref{thm:reprdelta}, and Lemma \ref{lemma:thetainterpolation} (2) there are operations $I_0,I_1,\ldots,I_k$ on $\clodO_{\F^k,\algB}^{(m)}$, such that $I_i$ is $(i,\F^k,\algB)$-UR and $I_i(f)(X) = X$ for all matrices of rank $\rk(X) \leq i$. In particular, this implies that the assumptions of Lemma \ref{lemma:basicinterpolation} are satisfied, for any matrix $X_0 \in \F^{m\times k}$ and $i=n=\rk(X_0)$. So, for every $H\leq \F^m$ of dimension $\dim(H) \leq k$, Lemma \ref{lemma:basicinterpolation} implies that there is an operation $J_H(f)(X) = \sum_{\Col{M}=H}\alpha_M f(MX)$, such that for all $f$ with support in $\{X \mid \rk(X) \geq i\}$ we have $J_{H}(f)(X) = f(X)$ if $\Col{X} = H$ and $J_H(f)(X) = 0$ if $\Col{X} \neq H$ and $\rk(X) \leq i$. Without loss of generality we can further assume that $J_{T(H)}(f)(X) = \sum_{\Col{M}=H}\alpha_M f(TMT^{-1}X)$, for every $T \in \GL_m(\F)$ (cf. the proof of Lemma \ref{lemma:thetainterpolation} (1)).

We next define the operations $L_A$. Note that the map $f \mapsto L_A'(f) = L_A(f)|_{\{X \mid \rk(X) \leq i\}}$ is already uniquely determined by (1) and (2), and is an $\algR$-linear map from $\algM_{i,k}(\F,\algB)$ to the module $\{f \colon { \{X \in \F^{m\times m} \mid \rk(X) \leq i\}} \to \algB \}$. By its definition $J_H(f)$ only depends on the restriction of $f$ to matrices $X$ with $\Col{X} \leq H$ (and therefore of rank $\leq \dim(H)$). Thus, $L_A(f) = J_{\Col{A}}(L_A'(f))$ is well-defined. It follows directly from this definition, that (1) and (2) hold. Property (3) follows from $L'_{TA}(f)(TX) = L_A'(f)(X)$, and $J_{T(\Col{A})}(f)(X) = \sum_{\Col{M}=A}\alpha_M f(TMT^{-1}X)$.

To see (4), note that (for fixed $k$, and $f \in \algM_{i,k}(\F,\algB)$), we can compute $L_A'(f)(X)$ by checking if $X$ has a rank factorization $X = AU$ (which can be done in time $O(m^3)$). To compute $L_H(f)(X) = \sum_{\Col{M}=H}\alpha_M L_A'(f)(MX)$, involves computing the matrix products $MX$ (which is again bounded by $O(m^3)$) and evaluating $L_A'(f)(MX)$, for every $M$ with $\Col{M} = H$. As the number of matrices $M\in \F^{m\times i}$ with $\Col{M} = H$ does not depend on $m$, this finishes the proof.

We next prove (4). Let $\clodC$ be a $(\F^k,\algB)$-clonoid, and $m\geq k$.

We first observe that every $f \in \clodC^{(m)}$ can be decomposed into the sum $f = \sum_{i=0}^k K_i(f)$, where $K_0(f) = I_0(f)$, and $K_i(f) = I_i(f-K_{i-1}(f))$. By definition, for every $i \in [k]_0$ we have that $K_i$ is $(i,\F^k,\algB)$-UR, $K_i(f)(X) = 0$ for $\rk(X) \leq i-1$. Let us define $\algC_i = \{K_i(f) \mid f \in \cloC^{(m)}\}$; clearly $\clodC^{(m)}$ (as a $\algR$-module) decomposes into the direct sum of $\algC_0,\ldots,\algC_k$. Note that the functions in $\algC_i$ are already determined by their values on rank $i$ matrices. In fact, $\algC_i$ can be characterized as the set of all $f \in \clodC^{(m)}$ that are generated by their $i$-ary minors, and satisfy $f(X) = 0$ for $\rk(X) < i$ (as, for such functions, $K_i(f) = f$).

We can furthermore decompose every $\algC_i$ into a direct sum of $\algR$-modules $\algC_H = \{J_H(f) \mid f \in \algC_i \}$, where $J_H$ is defined as above. By construction, every function $J_H(f) \in \algC_H$ is completely determined by its restrictions to matrices of column-space $H$. On the other hand, $\algC_H$ can also be characterized as all $f\in \cloC^{(m)}$ that are generated by their $i$-ary minors, and satisfy $f(X) = 0$ if $\Col{X} \neq H$ and $\rk(X) \leq i$. For any $A$ with $\Col{A} = H$, the image of $L_H$ only consists of such functions. Therefore $L_A^{-1}(\cloC^{(m)}) = L_A^{-1}(\algC_H)$, if $\Col{A} = H$.

Next, note that for any $T \in \GL_m(\F)$, the map $I_T \colon \clodO_{\F^m,\algB}^{(m)} \to  \clodO_{\F^m,\algB}^{(m)}$ defined by $I_T(f)(X) = f(TX)$ is an isomorphism from $\algC_H$ to $\algC_{T(H)}$. By property (3) we have $L_{TA}(f) = L_A(I_T(f))$, therefore $L_A^{-1}(\cloC^{(m)}) = L_{TA}^{-1}(\cloC^{(m)})$, for every $T\in \GL_m(\F)$. In other words $L_A^{-1}(\cloC^{(m)})$ only depends on $i = \rk(A)$. We can see that $L_A^{-1}(\cloC^{(m)})$ is even an $\algR[\GL_i(\F)]$-module, by only considering maps $T \in \GL_m(\F)$ that fixes $\Col{A}$ setwise. Then $I_T$ is an automorphism of $\algC_H$, such that $I_T(f)$ only depend on the restriction of $T$ to $H$. In other words $\algC_H$ is module over $\algR[\GL(H)] = \algR[\GL_i(\F)]$, and so is its isomorphic copy $L_A^{-1}(\algC_H)$.
 
Conversely, for every $i \in [k]$, let $\algD_i$ be an $\algR[\GL_i(\F)]$-submodule of $\algM_{i,k}(\F,\algB)$. Let $\algD$ be the linear closure of $\{L_A(\algD_i) \mid i \in [k], \rk(A) = i\}$. We then need to show that $\algD$ is the $m$-ary part of a clonoid. (Then, by Theorem \ref{theorem:main}, and $m>k$ we are done.)

We already know that $\algD$ is an $\algR$-module. Thus, we only need to prove that it is closed under composition with matrices from the inside. So let $f \in \algD_i$, $A\in \F^{m\times i}$ with $\rk(A) = i$ and $M\in \F^{m\times m}$. Then, we need to prove that also $L_A(f)(MX) \in \algD$. If $\rk(MA) <i$, then we get $L_H(f)(MX) = 0$, by definition of $L_H$, so we are done. Otherwise $\rk(MA)  = i$, which implies that there is a linear transformation $T\in \GL_m(\F)$, such that $MA = TA$. Then, by property (3), we get $L_A(f)(MX) = L_A(f)(TX) = L_{T^{-1}A}(f)(X)$, which is also in $\algD$. This finishes the proof.
\end{proof}

\begin{theorem} \label{theorem:clonoidlattice}
The lattice of clonoids from $\F^k$ to $\algB$ is isomorphic to $\prod_{i=0}^k \Sub(\algM_{i,k}(\F,\algB))$.

Moreover, for $j\leq k$, the sublattice $\prod_{i=0}^j \Sub(\algM_{i,k}(\F,\algB))$ corresponds to the clonoids consisting of functions that are generated by their $j$-ary minors.
\end{theorem}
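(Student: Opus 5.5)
The plan is to read the isomorphism off Proposition \ref{proposition:grouprings}, fixing an arity $m > k$. By Corollary \ref{corollary:main}, every $(\F^k,\algB)$-clonoid $\clodC$ is generated by its $k$-ary part, hence also by $\clodC^{(m)}$; since $m\geq k$, $\clodC^{(k)}$ is recovered from $\clodC^{(m)}$ by identifying variables. So $\clodC \mapsto \clodC^{(m)}$ is an order embedding of the clonoid lattice into the lattice of $\algR$-submodules of $\clodO^{(m)}_{\F^k,\algB}$ that are closed under inner composition with $\F^{m\times m}$. For each $i\in[k]_0$ we fix one matrix $A_i \in \F^{m\times i}$ of rank $i$ and define
\[\Phi(\clodC) = \bigl(L_{A_0}^{-1}(\clodC^{(m)}),\ldots, L_{A_k}^{-1}(\clodC^{(m)})\bigr).\]
By Proposition \ref{proposition:grouprings}(5), each coordinate is an $\algR[\GL_i(\F)]$-submodule of $\algM_{i,k}(\F,\algB)$ and does not depend on the choice of $A_i$, so $\Phi$ is a well-defined map into $\prod_{i=0}^k\Sub(\algM_{i,k}(\F,\algB))$. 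It is clearly monotone.

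For injectivity, and to identify the inverse, I reuse the decomposition from the proof of the proposition: $\clodC^{(m)}=\bigoplus_i \algC_i$ with $\algC_i=\bigoplus_{\dim H=i}\algC_H$. Each $f\in \algC_H$ is determined by its values on matrices $X=AU$ with $\Col{X}=H$, so by property (1) we get $f = L_A(g)$ with $g(U)=f(AU)$, and $g\in L_A^{-1}(\clodC^{(m)})$. Hence $\clodC^{(m)}$ is the $\algR$-linear span of $\{L_A(\algD_i) \mid \rk(A)=i\}$, where $\algD_i$ is the $i$-th coordinate of $\Phi(\clodC)$. Thus $\Phi$ is injective, and $\Psi\colon(\algD_i)_i\mapsto\langle \mathrm{span}\{L_A(\algD_i)\}\rangle$ is a left inverse.

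For surjectivity, the converse half of the proof of Proposition \ref{proposition:grouprings} already shows that for any tuple $(\algD_i)_i$ this span is the $m$-ary part of a clonoid. It remains to check that $\Phi\Psi=\id$, namely that $L_A^{-1}$ of the span returns exactly $\algD_i$ and not something larger. This is the step needing care, and I will do it by evaluating at rank-$i$ matrices. Properties (1) and (2), together with the triangularity of values (each $L_{A'}(\algD_j)$ vanishes on rank-$<j$ matrices, and for $j=i$ vanishes off $\Col{A'}$), mean that a sum $\sum L_{A'}(d_{A'})$ restricted to matrices with column space $\Col{A}$ only sees terms with $j\leq i$. Then $L_A(g)$ lying in the span forces, via the direct-sum decomposition of $\algC_i$, that $g$ is a sum of elements $d_{TA}\circ$(a change of basis) lying in $\algD_i$. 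Closure of $\algD_i$ under $\GL_i(\F)$ then gives $g\in\algD_i$. Both $\Phi$ and $\Psi$ are order-preserving bijections, so they are lattice isomorphisms.

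For the second claim, a clonoid $\clodC$ is generated by its $j$-ary minors exactly when $K_i(f)=0$ for all $i>j$ and $f\in\clodC^{(m)}$, i.e. when $\algC_i=0$ for $i>j$, i.e. when $\Phi(\clodC)_i=0$ for $i>j$. Here I use that $\algC_i$ consists of the functions generated by $i$-ary minors that vanish below rank $i$, and that $I_j$ reproduces $j$-generated functions. This identifies the corresponding clonoids with the sublattice $\prod_{i\le j}\Sub(\algM_{i,k}(\F,\algB))$. I expect the main obstacle to be the bookkeeping showing that $L_A^{-1}$ of the span equals $\algD_i$.
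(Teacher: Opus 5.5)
Your proposal is correct and follows essentially the paper's route. Both arguments read the isomorphism off Proposition~\ref{proposition:grouprings}, using the maps $L_A$ and the decomposition $\clodC^{(m)}=\bigoplus_i\bigoplus_{\dim H=i}\algC_H$ from its proof, then use Theorem~\ref{theorem:main} to pass from one arity to whole clonoids, and characterize $j$-generated clonoids by the vanishing of the components with $i>j$. The differences are cosmetic: the paper takes $m=k$, defines the map in the direction $(\algC_i)_i\mapsto\sum_H L_{A_H}(\algC_{\dim H})$ and simply asserts it is a lattice isomorphism, whereas you take $m>k$, go the other way, and explicitly check, via the projections $K_i$ and property (2), that $L_A^{-1}$ of the span gives back exactly $\algD_i$.
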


\begin{proof}
Let $m = k$, and let $(A_H)_{H \leq \F^k}$ be a family of matrices such that $\Col{A_H} = H$, and $A_H \in \F^{k \times \dim(H)}$. By Proposition \ref{proposition:grouprings} (3), the map $L \colon \prod_{i=0}^j \Sub(\algM_{i,k}(\F,\algB)) \to \mathcal P(\clodO_{\F^k,\algB}^{(k)})$ given by $L(\algC_0, \algC_1,\ldots,\algC_K) = \sum_{H\leq \F^K} L_{A_H}(\algC_{\dim(H)})$, is a lattice isomorphism from $\prod_{i=0}^j \Sub(\algM_{i,k}(\F,\algB))$ to the lattice of $k$-ary parts of clonoids. By Theorem \ref{theorem:main}, we are done.

To prove the second statement, note that by the proof Proposition \ref{proposition:grouprings}, $\cloC$ is generated by $j$-ary minors if and only if $L_A^{-1}(\cloC) = \{0\}$ for some $A\in \F^{m\times i}$ if and only if $i>j$.
\end{proof}

We remark that, in the special case where $k = 1$, and $\algB$ is a field, this description of the clonoid lattice was already derived in \cite{fioravanti-clonoids} (in fact, an even more explicit description could be computed, as then $\algM_{1,1}(\F,\algB) = \algB[\F^*]$ is a cyclic module).

\section{Subpower Membership Problem} \label{sect:SMP}

In this section, we discuss the Subpower Membership Problem for 2-nilpotent Mal'cev algebras $\algA$. As it was pointed out in \cite[Chapter 7]{FM-commutators}, to every 2-nilpotent Mal'cev algebra with central series $\mathbf 1_A > \mu > \mathbf 0_A$, one can associate two affine algebras $\algU = \algA/\mu$ and $\algL = \mu /\Delta_{\mu,\mathbf 1_A}$, such that $\algA$ is isomorphic to a \emph{central extension} (also called \emph{wreath product}) $\algU \otimes \algL$, whose domain is $U\times L$, and whose basic operations are of the form

$$f^{\algU \otimes \algL}\left( \begin{bmatrix} u_1\\ l_1 \end{bmatrix}, \ldots, \begin{bmatrix} u_n\\ l_n \end{bmatrix} \right) = \begin{bmatrix} f^{\algU}(u_1,\ldots,u_n)\\ f^{\algL}(l_1,\ldots,l_n) + \hat f(u_1,\ldots,u_n) \end{bmatrix},$$
for some $\hat f \colon U^n \to L$. In fact, then also all term operations can be represented as above.

Based on this representation, we define its \emph{difference clonoid} $\mathrm{Diff}(\algU \otimes \algL)$, as the set of differences $\hat t - \hat s$, for terms $t,s$ such that $t^\algU = s^{\algU}$ and $t^\algL = s^{\algL}$. As it was shown in \cite{kompatscher-SMP2nil}, $\clodC = \mathrm{Diff}(\algU \otimes \algL)$ is indeed a clonoid from $\algU$ to $\algL$ extended by some constant $0$, and  the subpower membership problem $\SMP(\algU \otimes \algL)$ reduces in polynomial time to the following problem:\\

{\noindent $\mathrm{CompRep}(\clodC)$}\\
\texttt{Input:} tuples $\va_1,\ldots,\va_m\in A^n$\\
\texttt{Output:} a generating set of $\{f(\va_1,\ldots,\va_m) \mid f \in \clodC^{(m)} \} \leq \algB^n$.\\

Here $f(\va_1,\ldots,\va_n)$ is computed component-wise. We are going to prove the following:

\begin{proposition} \label{prop:SMP}
Let $\cloC$ be a clonoid between $\algA$ and $\algB$, such that $\algA$ is polynomially equivalent to a finite vector space $\F^k$, and $\algB$ is a coprime module. Then $\mathrm{CompRep}(\clodC)$ is solvable in polynomial time.
\end{proposition}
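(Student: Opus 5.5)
Throughout, $\algA$, $\algB$ and $\clodC$ are fixed, so every constant depending only on them (including $k$, $|F|$ and $|B|$) counts as $O(1)$. We first reduce to the case that $\algA$ is the vector space $\F^k$ itself. By Lemma \ref{lem:unifgenforpolyequiv}(\ref{ite:lem:unifgenforpolyequiv1}) together with Theorem \ref{theorem:main} and Theorem \ref{theorem:unigen}, $\clodC$ is $(k+1,\algA,\algB)$-UG. Alternatively, $\clodC$ is a clonoid from $\Clo(\F^k)$ to $\algB$ that is also closed under translations, since $\Clo(\algA)$ lies between $\Clo(\F^k)$ and $\Clo(\F^k_{const})$. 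So we may treat $\clodC$ as an $(\F^k,\algB)$-clonoid that happens to be closed under adding constants. By Corollary \ref{corollary:main}, every such clonoid equals $\langle \clodC^{(k)}\rangle_{\F^k,\algB}$. The finite set $G=\clodC^{(k)}$, or a finite $\algR$-generating set of it, can be hardwired into the algorithm.

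Let the input be $\va_1,\dots,\va_m\in A^n$, and regard it as $n$ matrices $Y_1,\dots,Y_n\in \F^{m\times k}$, where $Y_j$ has rows given by the $j$-th coordinates of the $\va_i$. Lemma \ref{lemma:ngen} gives
\[
\clodC^{(m)} = \Bigl\{\textstyle\sum_t \beta_t\, g_t(M_t X) \;\Bigm|\; g_t\in G,\ M_t\in \F^{k\times m}\Bigr\}
\]
as an $\algR$-module. Therefore the target submodule of $\algB^n$ is spanned by the vectors
\[
w_{g,M}=\bigl(g(MY_1),\ldots,g(MY_n)\bigr), \qquad g\in G,\ M\in\F^{k\times m}.
\]
There are exponentially many matrices $M$, so we cannot enumerate them. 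The key observation is that $w_{g,M}$ depends only on the tuple $(MY_1,\dots,MY_n)\in (\F^{k\times k})^n$. Equivalently, it depends only on the rows of $M$ applied to the concatenated matrix $Y=[Y_1|\cdots|Y_n]\in\F^{m\times nk}$. Each row $\vm^T$ of $M$ yields $\vm^TY$, which ranges over the row space $W$ of $Y$, a subspace of $\F^{nk}$ of dimension at most $\min(m,nk)$. So we need $\{g(Z_1,\dots,Z_n)\}$ where the $k$ rows of $Z$ range independently over $W$, and $W$ can still be exponentially large.

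To get a polynomial-size generating set, we use the explicit decomposition from Proposition \ref{proposition:grouprings}. Write each generator as $g=\sum_{i}K_i(g)$, where $K_i(g)$ is $(i,\F^k,\algB)$-UR and is supported on rank-$\geq i$ inputs. Then iterate over $i=0,\dots,k$ and over $i$-tuples of vectors in $W$ up to a suitable equivalence. A simpler plan is to avoid the lattice machinery and argue via the linear-algebraic structure directly. Choose a basis $b_1,\dots,b_d$ of $W$ by Gaussian elimination in polynomial time. Then $\vm^TY=\sum_j c_j b_j$ with $c\in\F^d$, so $w_{g,M}=h_g(C)$ for the function $h_g(C)=(g(CB_1),\dots,g(CB_n))$ with $C\in\F^{k\times d}$. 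Since $\clodC$ is $(k,\F^k,\algB)$-UG, each $g\circ C$ is an $\algB$-combination of $g\circ (C R)$ with $\rk(R)\le k$. We plan to show that the span is already generated by matrices $C$ drawn from a restricted family. Concretely, we take $C$ whose rows are supported on at most $k$ basis positions, or more symmetrically, $C=PE$ with $E$ picking $\le k$ coordinates after a fixed invertible change of basis. There are $O(d^k)$ such choices with $d\le nk$, which gives polynomially many vectors $w$, each computed in polynomial time.

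The main obstacle is justifying that this restricted family of $C$ suffices. Our plan is as follows. Uniform generation by $k$-ary minors alone does not reduce the number of relevant $M$; what we need is that the module spanned over all $C\in\F^{k\times d}$ equals the span over a polynomial family. We would exploit coprimality through averaging. In the module $\algB^n$, inclusion–exclusion formulas with coefficients that are powers of $q^{-1}$, in the spirit of Theorem \ref{thm:reprdelta}, should express $h_g(C)$ for a general $C$ via values at matrices in a canonical form relative to the fixed basis, using closure under translations to handle the affine part. If this direct approach fails, the fallback is a sequential algorithm. We would process the basis vectors $b_1,\dots,b_d$ one at a time, maintaining a generating set for the span over $C$ supported on the first $j$ columns. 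When adding column $j+1$, Lemma \ref{lemma:reductiontosingleton}(2)-type interpolation ($I_i$ agreeing with $f$ on rank $\le i$ inputs) shows that only boundedly many new combinations are needed. After each step we prune to a basis using linear algebra over the finite module $\algB^n$, which is polynomial for a fixed finite module. This keeps the generating set polynomial in size throughout.
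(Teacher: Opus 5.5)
Your proposal has a genuine gap, and it sits at the step you flag as the main obstacle: you never obtain a polynomial-size generating set.

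\textbf{The restricted family fails.} Your family consists of matrices $C$ whose rows are supported on at most $k$ positions with respect to a basis of $W$. This is not justified, and for an arbitrary basis it is false. Take $k=1$, $\F=\F_3$, $\algB=\Z_2$, $\clodC=\langle\delta_0\rangle_{\F_3,\algB}$, and two inputs given by linearly independent $\vx_1,\vx_2\in\F_3^m$, so that $W=\F_3^2$. Choose the basis $b_1=(1,1)$, $b_2=(1,2)$. Your family then only produces $w\in\{0\}\cup\{\lambda b_1,\lambda b_2\mid \lambda\in\F_3\}$. Every $g\in G=\clodC^{(1)}$ is constant on $\F_3\setminus\{0\}$, so you only obtain multiples of $(1,1)$. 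But any $\vm$ with $\vm^T\vx_1=0$ and $\vm^T\vx_2=1$ yields $(1,0)$.

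You give no rule for choosing a good basis. The fallback is also only asserted: the claim that Lemma~\ref{lemma:reductiontosingleton}(2)-type interpolation needs only boundedly many new combinations per added basis vector has no argument behind it. The operations $I_i$ there control values on inputs of rank at most $i$; they say nothing about the size of the resulting span.

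\textbf{The reduction to $\algA=\F^k$ is incorrect.} Polynomial equivalence only says that $\algA$ and $\F^k$ have the same polynomial clone; it does not give $\Clo(\F^k)\subseteq\Clo(\algA)$. For example, the idempotent reduct generated by $x-y+z$ and $rx+(1-r)y$ does not contain $x+y$. Hence an $(\algA,\algB)$-clonoid need not be an $(\F^k,\algB)$-clonoid. One has to transport the argument along $x+^y z$ and $r^y$, as in Lemma~\ref{lem:unifgenforpolyequiv}.

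\textbf{The missing idea: locality in the column spaces.} The right objects are the column spaces $\Col{X_j}\le\F^m$, each of dimension at most $k$, rather than the row space $W$, which can be exponentially large. By Proposition~\ref{proposition:grouprings}, every $f\in\clodC^{(m)}$ decomposes uniquely as $f=\sum_H L_{A_H}(f_H)$. Here $H$ ranges over subspaces of $\F^m$ with $\dim H\le k$, and $f_H$ lies in the module $L_{A_H}^{-1}(\clodC)$, which depends only on $\dim H$. Crucially, $f(X_j)$ depends only on the components with $H\subseteq\Col{X_j}$.

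Each $\Col{X_j}$ has only boundedly many subspaces (depending on $|F|$ and $k$). So the set $\mathfrak H$ of relevant $H$ has size $O(n)$. The required submodule is then generated by the vectors $(L_{A_H}(g)(X_1),\ldots,L_{A_H}(g)(X_n))$, where $H\in\mathfrak H$ and $g$ runs through a fixed finite generating set of $L_{A_H}^{-1}(\clodC)$. Each entry is computable in time $O(m^3)$ by Proposition~\ref{proposition:grouprings}(4).

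The generators $L_{A_H}(g)$ are not single minors $g(MX)$. They are $\algR$-combinations of many minors with coefficients that are powers of $q^{-1}$, and this is where coprimality enters. Your sketch points in this direction (``averaging'', the decomposition via $K_i$), but it never turns this into a bound on the number of generators.
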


\begin{proof}
Let us first assume that $\algA = \F^k$. By Proposition \ref{proposition:grouprings} (5), we know that there are submodules $\algC_i \leq \algM_{i,k}(\F,\algB)$ for all $i \in [k]_0$, such that $\clodC^{(m)}$ is generated by $\{L_A(f) \mid A \in \F^{m\times i}, \rk(A) = i, f \in \algC_i \}$. In fact, if for every $H \leq \F^m$ of dimension $\dim(H)\leq k$, we denote by $A_H \in \F^{\dim{(H)}\times m}$ a matrix with $\Col{A_H} =H$, then Proposition \ref{proposition:grouprings} implies that $f \in \clodC^{(m)}$ can be uniquely written as $f = \sum_{H\leq \F^k, \dim(H) =i} L_{A_H}(f_H)$, for some $f_H \in \algC_i$.

Now let us consider an instance of $\mathrm{CompRep}(\clodC)$. Using our established matrix notation, for given input matrices $X_1,\ldots,X_n \in \F^{m\times k}$, the task is to compute a generating set of the module $\{(f(X_1),f(X_2),\ldots,f(X_n)) \mid f \in \clodC^{m}\} \leq \algB$.

Let $\mathfrak H = \{H\leq \F^m \mid H \leq \Col{X_i},$ for some $ i \in [n]\}$. Note that the size of $\mathfrak H$ is linear in $n$. By picking a basis for every such space $H \in \mathfrak H$, we can enumerate matrices $(A_H)_{H \in \mathfrak H}$ as above in time $O(|A|^k \cdot n) = O(n)$.

From Proposition \ref{proposition:grouprings} (1)-(3) (and the decomposition $f = \sum_{H\leq \F^k, \dim(H) =i} L_{A_H}(f_H)$, for $f\in \clodC^{(m)}$), it follows that for all $f,g\in \cloC^{(m)}$, the equation $f(X_i) = g(X_i)$ holds, if and only if $L_{A_H}(f_H) = L_{A_H}(g_H)$, for all $H$ with $H \subseteq \Col{X_i}$. Hence $\{(f(X_1),f(X_2),\ldots,f(X_n)) \mid f \in \clodC^{m}\} $ is generated by $$\{(g(X_1),g(X_2),\ldots,g(X_n)) \mid H \in \mathfrak H, g = L_{A_H}(g_H),  g_H \in \algM_{\dim(H),k}(\F,\algB) \}.$$ Computing $L_{A_H}(g_H)(X_i)$ can be done in time $O(m^3)$ by Proposition \ref{proposition:grouprings} (4). Therefore $\mathrm{CompRep}(\clodC)$ is solvable in polynomial time $O(nm^3)$

In the case that $\algA$ is only polynomially equivalent to $\F^k$, then we still know (cf. Lemma \ref{lem:unifgenforpolyequiv}), that it contains terms $x+^y z = x-y+z$, $-^y( x) = y-x+y$ and $(r^y)_{r \in \F}$ with $r^y(x) = rx + (1-r)y$, such that the algebra $\algA^{y} = (A,+^y,y,-^y,(r^y)_{r \in \F})$ is isomorphic to $\F^k$ for every substitution of $y$ by some constant. As pointed out in Lemma \ref{lem:unifgenforpolyequiv} (1), the uniform generation by $k$-ary minors of operations from $\F^k$ and $\algB$, lifts to uniform generation by $k+1$-ary minors of operations from $\algA$ to $\algB$. Similar statements apply to the uniformly representable operations in Lemma \ref{lemma:basicinterpolation}, and in the proof of Proposition \ref{proposition:grouprings}. We leave the proof to the reader. Following the very same argument as for $\algA=\F^k$, we get that $\mathrm{CompRep}(\clodC)$ is solvable in polynomial time $O(nm^3)$.
\end{proof}

\begin{theorem}
Let $\algA = \algU \otimes \algL$ be a finite 2-nilpotent Mal'cev algebra in finite language, such that $\algU$ is polynomially equivalent to a vector space. Then $\SMP(\algA)$ is solvable in polynomial time.
\end{theorem}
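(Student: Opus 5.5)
The plan is to reduce $\SMP(\algA)$ to $\mathrm{CompRep}$ of the difference clonoid, and then invoke Proposition~\ref{prop:SMP}. First, since $\algA$ is a finite 2-nilpotent Mal'cev algebra, we write it as a central extension $\algU \otimes \algL$ as described at the start of this section. Here $\algU = \algA/\mu$ and $\algL$ are affine. By assumption $\algU$ is polynomially equivalent to a vector space $\F^k$. The lower algebra $\algL$ is affine, hence polynomially equivalent to a module; without loss of generality we regard it as a module $\algB$ (Lemma~\ref{lem:unifgenforpolyequiv}~(2) allows passing between $\algL$ and the module on the target side). By the result of \cite{kompatscher-SMP2nil} quoted above, $\SMP(\algU\otimes\algL)$ reduces in polynomial time to $\mathrm{CompRep}(\clodC)$ for $\clodC = \mathrm{Diff}(\algU\otimes\algL)$. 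This $\clodC$ is a clonoid from $\algU$ (extended by a constant $0$, which is harmless since $\algU$ is only required to be polynomially equivalent to $\F^k$) to $\algL$.

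The key case distinction concerns the orders of $\F$ and $L$. If $|L|$ is coprime to $\chars(\F)$, then Proposition~\ref{prop:SMP} applies directly and gives a polynomial-time algorithm for $\mathrm{CompRep}(\clodC)$, hence for $\SMP(\algA)$. Otherwise the coprimality fails, and I would split $\algL$ into its $p$-primary part and its $p'$-part, where $p=\chars(\F)$. The abelian group underlying the module $\algL$ decomposes as $L_p \times L_{p'}$, and so the central extension splits accordingly. Concretely, the clonoid $\clodC$ decomposes as a product of a clonoid into $L_p$ and a clonoid into $L_{p'}$. Composing with the module projections onto the two components, $\mathrm{CompRep}(\clodC)$ becomes the union of the two corresponding $\mathrm{CompRep}$ problems.

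The $p'$-part is handled by Proposition~\ref{prop:SMP}. For the $p$-part, the extension $\algU \otimes \algL_p$ is a nilpotent algebra of prime-power order (modulo the quotient), and hence is supernilpotent in the Mal'cev setting; its SMP is known to be in P by \cite{mayr-SMP} (supernilpotent Mal'cev algebras in finite language). Equivalently, the corresponding difference clonoid consists of functions of bounded degree, so its $\mathrm{CompRep}$ can be solved by linear algebra over polynomial-size bases. The two partial generating sets are then combined, and the result is a generating set for the whole image module.

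I expect the main obstacle to be making the $p$/$p'$ splitting rigorous at the level of the algebra $\algA$, rather than merely at the level of the clonoid. Two things must be checked: that $\mathrm{Diff}$ and the reduction of \cite{kompatscher-SMP2nil} commute with this decomposition, and that finite language guarantees that the reduction is polynomial. If the splitting proves awkward, the first fallback I would try is to avoid it entirely, by proving directly that the $p$-component of $\mathrm{Diff}$ has bounded-arity polynomial normal forms.
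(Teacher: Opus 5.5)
Your proposal follows the paper's proof essentially step for step. You reduce $\SMP(\algA)$ to $\mathrm{CompRep}(\clodC)$ via \cite{kompatscher-SMP2nil}, view $\clodC$ as a clonoid into the abelian group $(L,+,0,-)$, split $f=\pi_1 f+\pi_2 f$ along $L=L_{p'}\times L_p$, and dispatch the $p'$-part by Proposition~\ref{prop:SMP}. One correction here: the constant $0$ is adjoined on the $\algL$ side, not the $\algU$ side. It is this, rather than Lemma~\ref{lem:unifgenforpolyequiv}, that puts you in the setting of Proposition~\ref{prop:SMP}.

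For the $p$-part, the paper uses precisely your `equivalently' route. Finite signature yields a constant $C$ such that every $f\in\pi_2\clodC$ is a sum of members of $\pi_2\clodC$ with at most $C$ essential variables (cf.\ \cite{KKK-CEQV2nil}). Evaluating these $O(m^C)$ functions on the input then solves $\mathrm{CompRep}(\pi_2\clodC)$. This is the justification to keep. Merely quoting Mayr's SMP result for $\algU\otimes\algL_p$ would additionally require identifying $\pi_2\mathrm{Diff}(\algA)$ with $\mathrm{Diff}(\algU\otimes\algL_p)$, and a reduction from $\mathrm{CompRep}$ back to $\SMP$.
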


\begin{proof}
By \cite[Theorem 10]{kompatscher-SMP2nil}, $\SMP(\algU \otimes \algL)$ reduces to $\mathrm{CompRep}(\clodC)$ in polynomial time, where $\clodC = \mathrm{Diff}(\algU,\algL)$ is a clonoid from $\algU$ to $\algL$, extended by a constant $0$. In particular, $\clodC$ is a clonoid from $\algU$ to the abelian group $\algL' =(L,+,0,-)$. If $\algL'$ is of coprime order to $\algU$, we are done by Proposition \ref{prop:SMP}. Else, note that $\algL'$ can be directly decomposed into $\algL' = \algL'_1\times \algL'_2$, such that $\algL_1'$ is coprime to $\algU$, and $\algL_2'$ is of size $p^n$, where the prime $p$ is the characteristic of $\algU$. Furthermore, $\Clo(\algL')$ contains the unary maps $\pi_1$ and $\pi_2$ that project to $\algL_1'$ and $\algL_2'$ respectively. 

Thus, every $f \in \clodC$ can also be decomposed into $f = \pi_1 f + \pi_2 f$, where $\pi_1 f$ is from a $(\algU,\algL_1')$-clonoid $\clodC_1$, and $\pi_2 f$ is from a $(\algU,\algL_2')$-clonoid $\clodC_2$. We can solve $\mathrm{CompRep}(\clodC_1)$ in polynomial time by Proposition \ref{prop:SMP}. Moreover, it can be seen (similar to the proof that supernilpotent Mal'cev algebras have polynomial time solvable SMP in \cite{mayr-SMP}) that $\mathrm{CompRep}(\clodC_2)$ can be solved in polynomial time. More precisely, we can be derived from the fact $\algA$ has a finite signature that there is a constant $C \in \N$ such that every $f(x_1,\ldots,x_m) \in \clodC_2$ can be written as a sum of functions $f_I \in \clodC_2$ with $I \subseteq [m]$ and $|I|\leq C$, such that $f_I$ only depend on the coordinates in $I$ (see \cite[Proof of Theorem 6.1]{KKK-CEQV2nil}). This means that $\mathrm{CompRep}(\clodC_2)$ can be solved in time $O(nm^C)$, by evaluating all operations $f_I \in \cloC$ with essential coordinates $I$, $|I|\leq C$ at the input. We leave the details of this proof to the reader. It follows that $\SMP(\algA)$ is in P.
\end{proof}

\section{Sharp lower bounds for the full clonoid}\label{sec:lowerbound}

In this section, we provide an elementary lower bound on the number of generators of the full clonoid between finite modules in general. 

\begin{theorem} \label{theorem:lowerbounds}
Let $\algA$ be a finite $\algR_A$-module and $\algB$ be a finite $\algR_B$-module. Assume that the full clonoid $\clodO_{\algA,\algB}$ is generated by the $m$-ary functions, for some $m$. Then $m \geq \frac{\log|A|}{\log|R_A|}$. 
\end{theorem}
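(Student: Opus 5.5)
The plan is a counting argument on what $m$-ary minors can ``see''. Suppose $\clodO_{\algA,\algB} = \langle \clodO_{\algA,\algB}^{(m)} \rangle$ and assume, for contradiction, that $|R_A|^m < |A|$. Take $k$ large; I will in fact take $k = m+1$, though any $k>m$ works. By Lemma \ref{lemma:ngen}, every $k$-ary $g$ lies in the clonoid generated by $\clodO^{(m)}$ exactly when $g = s\circ(h_1\circ r_1,\ldots,h_t\circ r_t)$ with $h_j$ $m$-ary and $r_j \in (\Clo(\algA)^{(k)})^m$. Each such $r_j$ is a linear map $\vx \mapsto M\vx$ with $M\in R_A^{m\times k}$. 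So the aim is to exhibit a single $k$-ary function $g$ that cannot be written in this form.

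The key observation is about the joint kernel of these maps. Consider the map $\Phi\colon A^k \to (A^m)^{R_A^{m\times k}}$, $\vx\mapsto (M\vx)_M$. If two tuples $\va\neq \vb$ satisfy $M\va = M\vb$ for \emph{all} $M\in R_A^{m\times k}$, then every function built as above takes equal values at $\va$ and $\vb$. Indeed, $g(\va)$ is obtained by applying $s$ coordinatewise to the values $h_j(r_j(\va))$, and each of these equals $h_j(r_j(\vb))$. Hence any $g$ with $g(\va)\neq g(\vb)$ is not generated, which contradicts fullness. Such a $g$ exists since $|B|\geq 2$; if $|B|=1$ the statement is degenerate and I will note that it must be excluded or handled separately.

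It therefore suffices to find $\va\neq\vb$ with $M(\va-\vb)=0$ for all $M$. Equivalently, I need a nonzero $\vx\in A^k$ with $\sum_i r_i x_i=0$ for every row vector $\vr\in R_A^k$. Using rows with a single entry $1$, this forces each $x_i=0$, so the naive choice fails. The argument has to be refined, and this is the main obstacle.

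The fix is to use the factorization through rank $m$ rather than the entries. Every $m$-ary minor of $g$, precomposed into a $k$-ary function, factors through the linear map $\vx\mapsto N\vx$ for some $N\in R_A^{m\times k}$. Instead of looking at a single pair, I will count. The set of functions $A^k\to B$ of the form $s\circ(h_1\circ N_1,\ldots)$ is the image of the clonoid generated by $m$-ary functions. The point is to choose the elements of $A^k$ cleverly: restrict to tuples $\vx = (c_1a,\ldots,c_ka)$ for a fixed generator $a$ of a cyclic submodule? A cleaner route is to take $k$ with $|A|$ elements and the tuple $\vx$ listing all elements of $A$. Then the submodule $A$ must be generated by the entries of $N\vx$, i.e. by $m$ elements, giving $|A|\le |R_A|^m$. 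Concretely, I plan to show the following. If $\clodO^{(k)}$ is generated by $m$-ary minors, then the function $g$ distinguishing $\vx$ from all other tuples (a $\delta$-type function) requires some minor $N$ that is injective on the orbit of $\vx$ under $\Clo(\algA)$, namely the submodule $\{M\vx\}\cong$ (a copy of) $A$. On the other hand, $N\vx\in A^m$ generates a submodule of size at most $|R_A|^m$, so the orbit $\{N M' \vx\}$ has at most $|R_A|^m$ elements, while $\{M'\vx\}$ contains $A$ diagonally and hence has size at least $|A|$. Making this injectivity requirement precise via Lemma \ref{lemma:ngen} is where I expect the real work to lie. Taking logarithms then yields $m\ge \log|A|/\log|R_A|$.
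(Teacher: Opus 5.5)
Your proposal does not reach a proof, and the replacement argument you sketch rests on false claims. You correctly see that the joint map $\vy\mapsto (N_j\vy)_j$ can always separate points, so the kernel idea fails. But $\Clo(\algB)$ allows $R_B$-linear combinations $\sum_j \beta_j h_j(N_j\vy)$ of several minors, so nothing forces a \emph{single} $N_j\in R_A^{m\times k}$ to be injective on anything. That kind of injectivity argument works only when the target clone is essentially unary, which is exactly the setting of Lemma~\ref{lemma:nonUG}.

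Worse, if $\vx$ lists all elements of $A$, then $\{M'\vx \mid M'\in R_A^{k\times k}\}$ is all of $A^k$, not a copy of $A$. So for $k>m$ no $N$ is injective on it. As a general principle your claim would then show that no $\delta$-function of arity $>m$ is ever generated by $m$-ary minors. This contradicts, e.g., Lemma~\ref{lemma:onedim} and Theorem~\ref{theorem:main}.

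The estimate $|\{NM'\vx\}|\le |R_A|^m$ is also wrong. It confuses the submodule of $A$ generated by the $m$ entries of $N\vx$, which does have at most $|R_A|^m$ elements, with the image $N(A^k)\subseteq A^m$. That image can have $|A|^m$ elements: take $A=\F^2$ over $\F$ and $N$ a coordinate projection. So the intended conclusion ``$A$ is generated by $m$ elements'' is never established, and the step you defer as ``the real work'' is not a technicality.

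The missing idea is a counting argument in which the arity $n$ tends to infinity. Working at one fixed arity such as $k=m+1$ cannot give the sharp bound. This is what the paper does. If $\clodO_{\algA,\algB}^{(n)}$ lies in the clonoid generated by the $m$-ary part, then it is the $R_B$-span of the functions $\vx\mapsto h(M\vx)$, with $h$ from a fixed finite set of $m$-ary functions and $M\in R_A^{m\times n}$. That is a span of at most $C\,|R_A|^{mn}$ functions, whereas $B^{A^n}$ needs on the order of $|A|^n$ generators. Comparing and letting $n\to\infty$ gives $|A|\le|R_A|^m$.

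The paper asserts that the $\delta_{\va}^g$ form a generating set of \emph{minimal} size, which is not true for every module $\algB$. A clean repair is to compare cardinalities instead. An $R_B$-span of $T$ functions has at most $|R_B/\mathrm{ann}(B)|^T$ elements, so $|A|^n\log|B|\le C'\,|R_A|^{mn}$ for all $n$, and this yields the bound once $|B|\ge 2$.

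Your remark that $|B|=1$ must be excluded is correct; the paper's argument tacitly needs this as well.
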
 

\begin{proof}
Let $G$ be a generating set of $\algB$ of minimal cardinality. Then, for every $n \in \N$, let $G_n \subseteq \clodO_{\algA,\algB}^{(n)}$ consist of all functions of the form $\delta_{\va}^g(X)$ with $\delta_{\va}^g(\va) = g \in G$, and $\delta_{\va}^g(\vx) = 0$ else, with  $\va \in \algA^n$. Note that $G_n$ is a generating set of the $\algR_B$-module $\algB^{\algA^n}$ of minimal cardinality $c\cdot |A|^n$. 

Now, assume that for $n \geq m$, $\clodO_{\algA,\algB}^{(n)}$ is in the clonoid generated by $\clodO_{\algA,\algB}^{(m)}$ (or, equivalently, $G_m$), and that $\algR_A$ is a right module. Then, $\clodO_{\algA,\algB}^{(n)}$ must be equal to the linear closure of all functions $g(M(x_1,\ldots,x_n)^t)$ such that $M$ is a $m\times n$-matrix over $\algR_A$, with $g \in \clodO_{\algA,\algB}^{(m)}$. There are at most $|G_m||R_A|^{mn}$ many distinct such functions. By the minimality of $|G_n|$ we get 
$$|G_m||R_A|^{mn} = c|A|^m|R_A|^{mn} \geq c|A|^n = |G_n|.$$
Considering the logarithm of the second inequality we get:
\begin{equation}\label{eq:loglower}
 m \geq \frac{n \log |A|}{\log |A| + n \log |R_A|}.   
\end{equation}
Since \eqref{eq:loglower} holds for every $n \geq m$, we obtain $m \geq \frac{\log|A|}{\log|R_A|}$.
\end{proof}

We remark that for vector spaces $\algA$ and $\algB$, this was already pointed out in \cite{vanecek-thesis}, together with a proof that, in the coprime case, $k$-ary functions suffice to generate the full clonoid. (Note however that this can also be derived from Proposition \ref{proposition:grouprings} (5)).

\section{Conclusion}\label{sec:conclusion}

In this paper, we studied clonoids between finite modules. We proved that there are only finitely many clonoids from a finite vector space $\algA$ to a coprime module $\algB$, which is also in accordance to Conjecture \ref{conjecture:main}.

Our proof (and, as we argued in Section \ref{sec:unifgen}, also the results for distributive modules $\algA$ in \cite{MW-clonoidsmodules}) uses that the operations from $A$ to $B$ are uniformly generated by $(\algA,\algB)$-minors. It is likely to expect that the same method works for all cases in the scope of Conjecture \ref{conjecture:main}. Thus, we also conjecture that the following statement is true:

\begin{conjecture} \label{conjecture:strong}
Let $\algA$ be a finite abelian $p$-group, and let $\algR$ be a ring (considered as regular $\algR$-module), such that $p$ is invertible in $\algR$. Then, there is a $k\in \N$, such that $\mathcal O_{\algA,\algR}$ is uniformly generated by $k$-ary $(\algA,\algR)$-minors.
\end{conjecture}

Note that, by Proposition \ref{proposition:ugproduct}, Conjecture \ref{conjecture:strong} implies the same statement holds for all abelian groups $\algA$ such that $|A|$ is invertible in $\algR$. Any module has clearly an abelian group as a reduct, thus we moreover get the same statement for modules $\algA$. Last, by Corollary \ref{corollary:orbitsupport}, Conjecture \ref{conjecture:strong} further implies that $\clodO_{\algA,\algB}$ is uniformly generated by $k$-ary $(\algA,\algB)$-minors for some $k$, for \emph{every} $\algR$-module $\algB$. In particular, this means that Conjecture \ref{conjecture:strong} implies Conjecture \ref{conjecture:main}.

If we consider only abelian $p$-groups $\algA$, then Theorem \ref{theorem:main} covers all elementary abelian groups $\algA = \Z_p^n$, while \cite{MW-clonoidsmodules} covers all cyclic groups $\algA = \Z_{p^n}$. Therefore, the easiest open problem is the following:

\begin{question} \label{question:pgroup}
Does Conjecture \ref{conjecture:strong} hold for $\algA = \Z_{p^2}\times \Z_p$, where $p$ is a prime?
\end{question}

Any progress on this question (and Conjecture \ref{conjecture:main} in general) likely hinges on combining the uniformly representable operations discussed in \cite{MW-clonoidsmodules} and the underlying paper. Note that, Theorem \ref{theorem:lowerbounds} and \cite[Lemma 5.1]{MW-clonoidsmodules} both only give us a lower bound of $k\geq 2$ on the arity of the generating minors.

In the context of the subpower membership problem, the second author conjectured in \cite{kompatscher-SMP2nil}, that $\SMP(\algA)$ of all finite 2-nilpotent Mal'cev algebras $\algA$ is polynomial time solvable. Our results in Section \ref{sect:SMP} confirm this for the case, in which the quotient of $\algA/\mu$ with respect to some central congruence $\mu$ is polynomially equivalent to a vector space. By building on the results in \cite{MW-clonoidsmodules}, we hope to also positively answer it in the case where $\algA/\mu$ is polynomially equivalent to a distributive module. This leads to the question:

\begin{question}
Is $\mathrm{CompRep}(\clodC)$ in P for every $(\algA,\algB)$-clonoid $\cloC$, with $\algA$ polynomially equivalent to a finite distributive module, and $\algB$ to a module?
\end{question}

We did not discuss any questions about equational theories in this paper. However, as it was exemplifies in \cite{mayr-VLloop} and Section 3 of \cite{KompatscherMayr2026}, finite basedness results for 2-nilpotent algebras can be derived from finite axiomatizations of the corresponding difference clonoids. Thus, last we ask:

\begin{question} \label{question:finitebasis}
Let $\algA$ and $\algB$ be finite modules, and let $\algC = \langle F \rangle_{\algA,\algB}$ be a finitely generated clonoid. Does $\clodC$ (or, more precisely, the multisorted algebra consisting of $\algA,\algB$ and $F$) always have a finite equational basis?
\end{question}

Due to the examples in \cite{mayr-VLloop} and \cite{KompatscherMayr2026} it is natural to expect uniformly representable operations (or, more precisely the formulas defining them), to play also a central role in answering Question \ref{question:finitebasis}.

\bibliographystyle{alphaurl}
\bibliography{clonoids.bib}
\end{document}